\UseRawInputEncoding
\documentclass{amsart}

\usepackage{amssymb,amsmath,amsthm}
\usepackage{amsaddr}
\usepackage{amsfonts}
\usepackage{authblk}
\usepackage{graphicx}
\usepackage{epstopdf}
\usepackage{ulem}
\usepackage{caption}
\usepackage{subcaption}
\usepackage{hyperref}
\usepackage{color}
\usepackage{marginnote}
\usepackage{float}
\usepackage{graphicx}
\usepackage{comment}
\usepackage{tikz-cd}
\usetikzlibrary{arrows}
\usetikzlibrary{intersections,positioning}
\tikzset{>=latex}
\usepackage{lipsum}%
\usepackage{a4wide}

\newcommand{\R}{\mathbb{R}}
\newcommand{\C}{\mathbb{C}}
\newcommand{\Z}{\mathbb{Z}}
\newcommand{\D}{\mathbb{D}}

\newcommand{\h}{\mathfrak{h}}

\newcommand{\authorfootnotes}{\renewcommand\thefootnote{\@fnsymbol\c@footnote}}%

\newcounter{newcounter}[section]
\numberwithin{equation}{section}
\numberwithin{newcounter}{section}
\numberwithin{figure}{section}
\numberwithin{footnote}{section}

\newtheorem{thm}[newcounter]{Theorem}
\newtheorem{defi}[newcounter]{Definition}
\newtheorem{prop}[newcounter]{Proposition}
\newtheorem{lem}[newcounter]{Lemma}
\newtheorem{cor}[newcounter]{Corollary}
\newtheorem{rem}[newcounter]{Remark}

\title[Birkhoff conjecture in Hill's lunar problem]{Birkhoff conjecture and finite energy foliations in Hill's lunar problem}
\date{\today}
\begin{document}
\maketitle

\email{liulei30@email.sdu.edu.cn}
\email{psalomao@sustech.edu.cn}

\begin{center}

\normalsize
\authorfootnotes
Lei Liu\textsuperscript{1} and
Pedro A. S. Salom\~ao\textsuperscript{2}  \par \bigskip

\textsuperscript{1}School of Mathematics, Shandong University  \par

\textsuperscript{2}Shenzhen International Center for Mathematics, SUSTech

\end{center}

\begin{abstract}
We prove Birkhoff's retrograde orbit conjecture in Hill's lunar problem by showing that the retrograde orbit bounds a disk-like global surface of section for every energy below the critical value. We also obtain a global description of the dynamics through the critical energy level by constructing finite energy foliations for energies slightly above it.  The binding of these foliations consists of the retrograde orbit together with the Lyapunov orbits near the critical points. As a consequence, there exist infinitely many periodic orbits and infinitely many trajectories asymptotic to the Lyapunov orbits. The proof combines pseudo-holomorphic curve techniques with a new convexity theorem for Hill's lunar problem. More precisely, we construct an explicit global symplectic change of coordinates under which the bounded regularized component becomes strictly convex up to the critical energy level. This convexity implies strong dynamical consequences, including lower bounds for the Conley-Zehnder indices of periodic orbits, and allows the application of the Hofer-Wysocki-Zehnder theory of finite energy foliations. As a result, we obtain disk-like global surfaces of section below the critical level and $2-3-2$  foliations for energies slightly above it.

\end{abstract}

\tableofcontents

\section{Introduction}

The restricted three-body problem has a central place in celestial mechanics since the works of Euler, Lagrange, Poincar¨¦, and Birkhoff. In its planar circular version, one studies the motion of a massless body under the gravitational influence of two primaries moving on circular orbits around their center of mass. Despite its simple formulation, the problem has a wide variety of dynamical phenomena, including periodic orbits, homoclinic intersections, chaotic scattering, and transport through the neck regions near the Lagrange equilibrium points.

One of the most influential models arising from the restricted three-body problem is Hill's lunar problem. Introduced by George W. Hill in his celebrated memoirs of 1878 and 1886 \cite{Hill1,Hill2}, the model was developed as part of Hill's program to obtain a precise analytical description of the motion of the Moon under the perturbative influence of the Sun. Rather than studying the full three-body problem directly, Hill derived a limiting system describing the dynamics near the Earth after a suitable rescaling procedure. The resulting equations contain the principal nonlinear effects of the solar perturbation while presenting a considerably simpler structure.

Hill's work represents one of the major achievements of nineteenth-century celestial mechanics. Using Fourier series and numerical calculations, he obtained highly accurate approximations of lunar periodic motions and set the foundations of a new lunar theory. His methods influenced generations of mathematicians and astronomers, and his lunar theory remained the standard reference
for many years. See Brown \cite{Brown1896} and Wintner \cite{Wintner1, Wintner2} for historical accounts and further developments. A modern discussion of Hill's original lunar theory, including the convergence of the associated series expansions, was given by Ligon \cite{Ligon}.


For much of the twentieth century, progress on Hill's lunar problem was driven primarily by perturbative methods, normal form techniques, and numerical investigations. Important advances were obtained through the study of invariant manifolds, periodic trajectories, and chaotic dynamics near the equilibrium points. In particular, the numerical investigations of Sim\'o and Stuchi \cite{SiSt2000} revealed a rich global structure involving Lyapunov orbits, stable and unstable manifolds, homoclinic and heteroclinic trajectories, and complicated transport phenomena near the critical energy level. The problem is known to be non-integrable \cite{MIW, M-RSS}, and transverse intersections of invariant manifolds produce chaotic recurrent dynamics in the sense of Poincar¨¦. See \cite{Conley1968,McGehee1969,SiSt2000} and references therein.

From the perspective of Hamiltonian and symplectic dynamics, Albers, Frauenfelder, Paternain and van Koert \cite{AFKP12} proved that the regularized energy surface below the critical value is fiberwise star-shaped. In particular, the regularized dynamics naturally defines a Reeb flow on a contact three-manifold. Later, Lee \cite{Lee2017} proved fiberwise convexity of the regularized energy surface below the critical value, relating the dynamics to a Finsler geodesic flow on the two-sphere. Quantitative aspects of the dynamics for very negative energies were studied in \cite{C-GLS2025} using Birkhoff normal forms and symplectic techniques. Also, recent works \cite{Aydin1, Aydin2} combined numerical investigations with techniques from symplectic dynamics to study bifurcating families of periodic solutions in the spatial Hill problem.

The development of pseudo-holomorphic curve techniques by Hofer \cite{Hofer1993} and Hofer, Wysocki and Zehnder \cite{ char1, char2, HWZI, HWZII, HWZIII, HWZ98,HWZ03} introduced powerful new methods for studying global properties of three-dimensional Hamiltonian dynamics. In particular, finite energy foliations organize the dynamics of Reeb flows through families of pseudo-holomorphic curves in symplectizations. These methods led to the construction of global surfaces of section and finite energy foliations in several situations, with strong dynamical consequences, see for example \cite{AFFHvK, dPS1, dPS2, dPHKS, FS, HWZ98, HWZ03,hryn2,HLS2014,HS1,HS2,HSW2023, LS25, Wendl08, Wendl08b, Wendl10}.

One of the central open questions in Hill's lunar problem concerns the existence of global surfaces of section. In 1915, Birkhoff \cite{Birkhoff1915} conjectured that the retrograde orbit bounds a disk-like global surface of section on the regularized energy surface below the critical value. In particular, it provides a complete reduction of the dynamics on the energy surface to a two-dimensional conservative system and implies the existence of a ``direct'' periodic orbit as a fixed point of the return map.

The present paper resolves Birkhoff's conjecture and gives the first global geometric description of Hill's lunar problem through the critical energy level. Our first main result proves that the retrograde orbit bounds a disk-like global surface of section for every energy below the critical value.
Our second main result concerns energies slightly above the critical value. In this case, we construct $2-3-2$ finite energy foliations whose binding consists of the retrograde orbit together with the Lyapunov orbits near the critical points. These foliations provide the first global description of the dynamics in a neighborhood of the critical energy level. In particular, they organize the dynamics by families of planes and cylinders transverse to the flow and imply the existence of infinitely many periodic orbits and infinitely many trajectories asymptotic to the Lyapunov orbits.

The results in this paper complete a program initiated by Birkhoff more than a century ago and further demonstrate the effectiveness of the Hofer-Wysocki-Zehnder theory of pseudo-holomorphic curves and finite energy foliations in the study of classical problems from celestial mechanics.

The key ingredient in the proofs is a convexity theorem for the bounded regularized component of the energy surface. More precisely, we show that after a suitable symplectic change of coordinates, the bounded component becomes strictly convex up to the critical energy level. This convexity implies lower bounds for the Conley-Zehnder indices of periodic orbits and provides the dynamical conditions required for the theory of pseudo-holomorphic curves. The resulting finite energy curves give rise to open book decompositions by disk-like global surfaces of section below the critical level, and $2-3-2$ foliations for energies slightly above it.

The convexity properties established in this paper appear to have applications beyond Hill's lunar problem. In particular, the symplectic coordinates constructed here naturally extend to the regularized planar restricted three-body problem for sufficiently small mass ratio \(\mu\). This suggests a possible approach to constructing finite energy foliations for energy levels ranging from the first to slightly above the second Lagrange value. Such foliations would provide a global description of the dynamics across the neck regions and will be investigated in future work.


\hfill\newline
\noindent{\bf Acknowledgments.}
LL is partially supported by the National Natural Science Foundation of China (Grant number: 12401238) and the Natural Science Foundation of Shandong Province (Grant number: ZR2024QA188).  LL thanks the support of the School of Mathematics at Shandong University. PS is partially supported by the National Natural Science Foundation of China (Grant number: w2431007). LL and PS thank the support of the Shenzhen International Center for Mathematics - SUSTech.

\section{Main Results}

The Hill's lunar problem can be described by the following Hamiltonian
$$
H(p,q)=\frac{1}{2}((p_1-q_2)^2+(p_2+q_1)^2)-\frac{3}{|q|}-\frac{3}{2}q_1^2.
$$
In suitable symplectic coordinates, the regularized Hamiltonian of Hill¡¯s lunar problem is given by
$$K(y,x) =
\frac{(y_1-2|x|^2x_2)^2}{2}+\frac{(y_2+2|x|^2x_1)^2}{2}+2|x|^2(2-3(x_1^2-x_2^2)^2).
$$
For every $E<0$,  $H^{-1}(E)$ corresponds to the regularized energy surface $K^{-1}(h), h=h(E):=12/|E|^{3/2}>0$, under a double covering map that identifies antipodal points. The quotient $K^{-1}(h(E))/\Z_2,$ will be called the regularization of $H^{-1}(E)$.

The Hamiltonian $H$ has a unique critical value $E_c=-9/2$, whose corresponding value of $K$ is $h_c=8\sqrt 2/9$.
For every $E<E_c$, there exists a component $\Sigma_E$ of $H^{-1}(E)$ projecting to a punctured disk about the origin in the $q$-plane. Let $\mathcal K_{h}$ be the corresponding regularized component of $K^{-1}(h), h=h(E)$. Then $\mathcal K_h$ is diffeomorphic to the three-sphere, and its quotient under the antipodal symmetry is denoted $\mathcal M_E \equiv \R P^3$ and called the regularization of $\Sigma_E$.
\begin{figure}[hbpt]
    \centering
    \begin{minipage}{0.3\linewidth}
    \includegraphics[width=\textwidth]{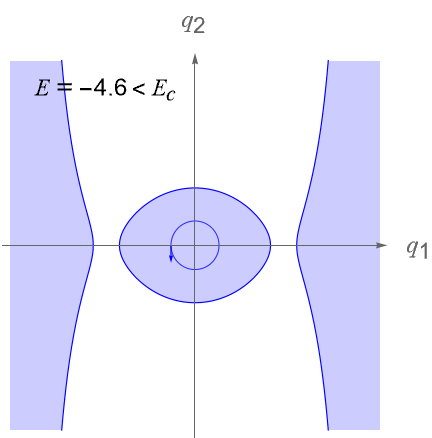}
    \end{minipage}
    \hspace{3mm}
    \begin{minipage}{0.3\linewidth}
    \includegraphics[width=\linewidth]{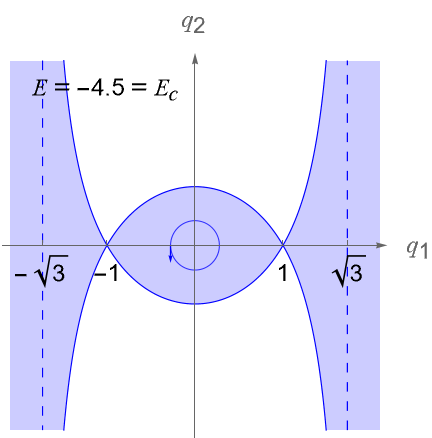}
    \end{minipage}
    \hspace{3mm}
    \begin{minipage}{0.3\linewidth}
    \includegraphics[width=\linewidth]{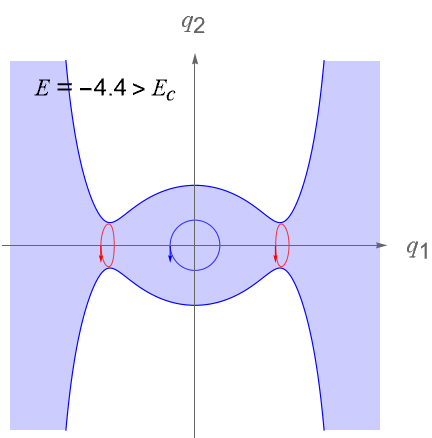}
    \end{minipage}
    \centering
    \caption{
Hill regions associated with the effective potential
$U(q)=-3/|q|-\frac32 q_1^2$
for energies below, at, and above the critical value
$E_c=-9/2$.
The bounded and unbounded Hill regions are disconnected when
$E<E_c$, meet at the saddle points $S_1$ and $S_2$ when
$E=E_c$, and become connected through two neck regions when
$E>E_c$. The retrograde orbit and the Lyapunov orbits
are shown.
}
    \label{fig_Hill}
\end{figure}

\subsection{Birkhoff conjecture and the dynamics below the critical value} In 1915, Birkhoff \cite[\S 16]{Birkhoff1915} introduced the shooting method to prove the existence of a retrograde orbit $P_{3, E}\subset \Sigma_E$ for every energy $E<E_c$, that is, a periodic orbit projecting to a simple closed curve in the $q$-plane surrounding the origin in the counterclockwise direction. Birkhoff's retrograde orbit conjecture states that the retrograde orbit bounds a disk-like global surface of section on the regularized energy surface $\mathcal M_E$, see \cite[Pg. 328]{Birkhoff1915}. More precisely, the retrograde orbit is $2$-unknotted in the following sense: there exists an immersion $u:\D \to \mathcal M_E$ from the unit closed disk $\D$ to $\mathcal M_E$ so that $u|_{\partial \D}:\partial \D \to P_{3, E}$ is a double covering map and $u|_{\D \setminus \partial \D}: \D \setminus \partial \D \to \mathcal M_E \setminus P_{3, E}$ is an embedding. Such an immersion $u$ is called a $2$-disk for $P_{3,E}$. Birkhoff's conjecture states that one can find a $2$-disk $\mathcal D_E\subset \mathcal M_E$ for $P_{3, E}$ whose interior is transverse to the flow, and every trajectory in $\mathcal M_E\setminus P_{3,E}$ intersects $\mathcal D_E$ forward and backward in time. Thus, a first return map to $\mathcal D_E\setminus P_{3,E}$ is well-defined and completely describes the dynamics on $\mathcal M_E$. As suggested by Birkhoff, a fixed point of the first return map, which always exists, would provide a ``direct'' orbit, whose existence does not follow directly from the shooting method.

Our first result completely proves Birkhoff's retrograde orbit conjecture in Hill's lunar problem for every $E<E_c$. Moreover, the resulting periodic orbit $\hat P$ given by any fixed point of the first return map forms a Hopf link with the retrograde orbit, and this link bounds an annulus-like global surface of section. This means that there exists an embedded closed annulus $\mathcal A_E\subset \mathcal M_E$ whose boundary is $P_{3, E} \cup \hat P$, whose interior is transverse to the flow, and so that every trajectory in $\mathcal M_E \setminus (P_{3,E}\cup \hat P)$ intersects $\mathcal A_E$ forward and backward in time. In this way, the first return map to the interior of $\mathcal A_E$ is well-defined and describes the dynamics on $\mathcal M_E$.

\begin{thm}\label{thm_main1}
For every $E<E_c$, let $P_{3, E}\subset \mathcal M_E\equiv \R P^3$ be the retrograde orbit constructed by Birkhoff in the regularization of $\Sigma_E$ whose projection to the $q$-plane is a simple closed curve surrounding the origin in the counterclockwise direction. Then the following statements hold:
\begin{itemize}
        \item[(i)] $P_{3,E}$ bounds a disk-like global surface of section $\mathcal D_E \subset \mathcal M_E$ which is the image of a $2$-disk for $P_{3,E}$. The interior of $\mathcal D_E$ is the page of an open book decomposition $(P_{3,E}, \Phi)$ of $\mathcal M_E$, i.e., there exists a smooth fibration $\Phi:\mathcal M_E \setminus P_{3,E} \to S^1$ so that the closure of each page $\Phi^{-1}(c)\subset \mathcal M_E \setminus P_{3,E},c\in S^1,$ is a $2$-disk for $P_{3,E}$ defining a global surface of section.

        \item[(ii)] Every fixed point of the first return map to $\mathcal D_E$ corresponds to a periodic orbit $\hat P\subset \mathcal M_E$ that forms a Hopf link with $P_{3,E}$, that is, $\hat P$ is $2$-unknotted and simply linked with $P_{3,E}$. Moreover, the Hopf link $P_{3,E} \cup \hat P$ bounds an annulus-like global surface of section for the flow on $\mathcal M_E$.  The interior of $\mathcal A_E$ is the page of an open book decomposition of $\mathcal M_E$ whose binding is $P_{3,E} \cup \hat P$ and whose pages are annulus-like global surfaces of section.
\end{itemize}
\end{thm}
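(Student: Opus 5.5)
The plan is to reduce both statements to the Hofer--Wysocki--Zehnder theory of dynamically convex contact forms on $S^3$ and its quotient $\R P^3$. I will carry this out through the convexity theorem announced in the introduction. The first step is to construct an explicit global symplectic change of coordinates $\Psi$ of $\R^4$, commuting with the antipodal map $x\mapsto -x$, $y\mapsto -y$. The transformed bounded component $\Psi(\mathcal K_h)$ should bound a strictly convex domain for every $0<h<h_c$. Concretely, I would write $\Psi(\mathcal K_h)$ as a regular level of a function $\tilde K$ and check that the Hessian of $\tilde K$, restricted to the tangent spaces of the level set, is positive definite up to $h_c$. Here I use that the quartic potential $2|x|^2(2-3(x_1^2-x_2^2)^2)$ has its saddles exactly at the critical level.

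Once strict convexity holds, the results of HWZ \cite{HWZ98} imply that the induced Reeb flow on $\mathcal K_h\cong S^3$ is dynamically convex: every periodic orbit has $\mu_{CZ}\geq 3$. Because the antipodal symmetry preserves the contact form, the quotient $\mathcal M_E\equiv\R P^3$ carries a dynamically convex contact form in the sense of the $\R P^3$ theory of Hryniewicz--Salomão \cite{HS1,HLS2014}. That theory states that a periodic orbit $P$ on a dynamically convex $\R P^3$ is the binding of an open book decomposition with disk-like pages that are $2$-disks and global surfaces of section, provided two conditions hold. First, $P$ must be $2$-unknotted. Second, $P$ must have self-linking number $-1/2$, or equivalently its lift to $S^3$ must be an unknot with self-linking number $-1$.

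The next step is therefore to verify these topological conditions for $P_{3,E}$. The lift of $P_{3,E}$ to $\mathcal K_h$ is a single orbit invariant under the antipodal map. I would track how $P_{3,E}$ moves in the family $E\in(-\infty,E_c)$ and use continuity of the knot type and self-linking number along a continuous family of transverse knots. As $E\to-\infty$, the problem approaches the rotating Kepler problem, and the retrograde circular orbit projects after Levi-Civita regularization to a great-circle-type curve. For that curve the lift is a Hopf fiber, so it is unknotted with self-linking number $-1$ in the standard tight $S^3$, which is legitimate because convexity makes the contact structure tight. Statement (i) then follows, with $\Phi$ obtained as the projection of the finite energy planes asymptotic to the double cover of $P_{3,E}$.

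For (ii), a fixed point $\hat P$ of the return map lies in the interior of a page and intersects every page exactly once. Hence $\hat P$ is transverse to the pages and links $P_{3,E}$ once in the $\R P^3$ sense. Being a fixed point of an area-preserving disk map, $\hat P$ is isotopic through transverse knots to a fiber of the open book, so it is $2$-unknotted with self-linking $-1/2$. By dynamical convexity, $\mu_{CZ}$ of its double cover is at least $3$. Applying the HS results for Hopf links in dynamically convex $\R P^3$ gives an open book with binding $P_{3,E}\cup\hat P$ whose annulus pages are global sections. I would check there that the Conley--Zehnder conditions needed for rigid cylinders follow from index $\geq 3$.

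I expect the main obstacle to be the convexity step: finding $\Psi$ and proving positivity of the restricted Hessian uniformly up to $h_c$, where convexity degenerates near the saddles. I would first try coordinates that mix $x$ and $y$ linearly in a rotating frame to remove the magnetic terms $\pm2|x|^2x_i$. After that I would verify the inequality by reducing, via the symmetry, to a polynomial inequality in $(x_1^2-x_2^2,|x|^2)$.
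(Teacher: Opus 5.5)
\textbf{Overall structure and the main gap.} Your reduction matches the paper's: a global symplectic change commuting with $-\mathrm{id}$ gives strict convexity, \cite{HWZ98} then gives dynamical convexity, and the $\R P^3$ binding theorem gives (i). The version the paper uses is \cite[Corollary 1.8]{HS2}; for (ii) the paper uses \cite[Theorem 1.17]{HSW2023}, which needs no Conley--Zehnder check beyond dynamical convexity.

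The genuine gap is that the one substantive step, the convexity theorem, is left open, and the route you sketch for it cannot work.
\begin{itemize}
\item The magnetic term cannot be removed. Its curvature $d(f_1dx_1+f_2dx_2)=8|x|^2\,dx_1\wedge dx_2$ is nonzero and unchanged by gauge changes $y\mapsto y+\nabla G(x)$.
\item The degree-four part $2|x|^2(x_1y_2-x_2y_1)$ of $K$ keeps depending on the momenta after any linear change of variables, because its differentials span all of $(\R^4)^*$.
\item The paper also notes that strict convexity actually fails in the Levi-Civita coordinates near $E_c$, so a nonlinear change is unavoidable.
\end{itemize}

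The missing idea is to keep the magnetic field and change only its gauge. The paper uses the shear $\Phi(y,x)=(y+\nabla G_0(x),x)$ with $G_0=\frac23(x_1^2-x_2^2)x_1x_2$, which commutes with $-\mathrm{id}$ since $G_0$ is even. It leaves $V$ unchanged and replaces the vector potential by the decoupled one $(-\frac83x_2^3,\frac83x_1^3)$, in which each component depends on a single variable. Decoupling is exactly the hypothesis of \cite[Theorem 9.1]{LS25}. That criterion turns positivity of all sectional curvatures into the scalar inequality $G(\theta,x)>0$.

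The rest is a chain of explicit estimates on the octant $x_2\ge x_1\ge 0$:
\begin{itemize}
\item the signs of $\hat f_{2,11}$ and $\hat f_{1,22}$ reduce to the worst angles;
\item monotonicity in $h$ reduces to $h=\mathfrak h_c$;
\item the inequalities $c_{h,-1}>0$, $W_h\ge 0$ and $\hat E_{0,\mathfrak h_c}>0$ involve square roots such as $x_1r_h(x)$, which are bounded by polynomials in the coordinates $(c,l_2)$.
\end{itemize}
So this is not a plain polynomial inequality in $(x_1^2-x_2^2,|x|^2)$. Also, the potential has degree six, not four.

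\textbf{Secondary gap: the hypotheses on $P_{3,E}$ and $\hat P$.} Your continuation from $E\to-\infty$ presupposes a continuous branch of Birkhoff orbits on all of $(-\infty,E_c)$. Shooting gives existence at each energy, but neither uniqueness nor continuity, and the theorem concerns any such orbit. Moreover, the limit is the totally degenerate Hopf flow, so convergence to a particular fiber needs its own argument.

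It is better to work at fixed $E$ from the projection property. The lift to $\mathcal K_h$ is the velocity lift of a simple closed curve $\gamma$ around $0$ in the $x$-plane. Hence it is a $(1,\pm1)$-curve on the torus $\Pi_x^{-1}(\gamma)$ and is isotopic to the fiber of $\Pi_x$ over $0$, which is an unknot. That gives $2$-unknottedness, but $\mathrm{sl}=-1/2$ still requires a separate contact-topological argument. The paper only asserts these properties (after Proposition~\ref{prop_retrograde}), but your argument does not close them either.

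Likewise, in (ii) the fibration $\Phi$ gives an isotopy of $\hat P$ to the core of the solid torus $\mathcal M_E\setminus P_{3,E}$ only through sections of $\Phi$, not through transverse knots. Area preservation plays no role here. Getting $\mathrm{sl}(\hat P)=-1/2$ needs a framing comparison between $\xi$ and the pages along $\hat P$; the paper takes this Hopf-link property from the cited literature.
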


\subsection{Finite energy foliations for energies slightly above the critical value} At the critical level $E=E_c=-9/2$, there exist two critical points $S_1=(0,1,-1,0)=-S_2$ of $H$. In fact, the lower level components $\Sigma_E \subset H^{-1}(E),$ develop the singularities $S_1,S_2$ as $E\to E_c^-$. In this case, the regularization of $H^{-1}(E_c)$ contains a compact subset homeomorphic to $\R P^3$, denoted $\mathcal M_{E_c}$, which, except for $S_1$ and $S_2$, contains only regular points.

The singularities $S_1,S_2 \in H^{-1}(E_c)$ project to saddle points of the associated potential $V(q):= -3/|q|-3q_1^2/2$. Each $S_j$ is a saddle-center for $H$, that is, the linearized Hamiltonian vector field at $S_j$ has a pair of real eigenvalues $\pm \sqrt{2\sqrt{7}+1}$ and a pair of purely imaginary eigenvalues $\pm i\sqrt{2\sqrt{7}-1}$.

For every $E-E_c>0$ sufficiently small, there exists a hyperbolic periodic orbit $P_{2,j, E}\subset H^{-1}(E)$ near each critical point $S_j$, and $P_{2,2,E} = - P_{2,1,E}$. These are the so-called Lyapunov orbits forming the center manifold of $S_j$ associated with the purely imaginary eigenvalues at $S_j$.  Also, for each $j$, it is possible to choose a regular embedded two-sphere $\mathcal S_j\subset H^{-1}(E)$ so that $P_{2,j, E}$ is an equator of $\mathcal S_j$ and each hemisphere of $\mathcal S_j \setminus P_{2,j, E}$ is transverse to the flow, pointing in opposite directions. Due to the antipodal symmetry of $H$, we may assume that $\mathcal S_2 = - \mathcal S_1$ and ${\rm dist}(\mathcal S_j, S_j) \to 0$ as $E \to E_c^+$. We keep the same notation for these objects after regularization. Indeed, there exists a compact subset $\mathcal M_E$ of the regularization of $H^{-1}(E)$, bounded by $\mathcal S_1 \cup \mathcal S_2$, which is diffeomorphic to a copy of $\R P^3$ with two copies of the open $3$-ball removed. This subset $\mathcal M_E$ is precisely where the relevant dynamics of Hill's lunar problem is located, and a rich structure of the dynamics follows from transverse homoclinic/heteroclinic orbits connecting the corresponding Lyapunov orbits.

For energies slightly above $E_c$, it is also possible to use Birkhoff's shooting method to obtain a retrograde orbit $P_{3, E} \subset \mathcal M_E\setminus (\mathcal S_1 \cup \mathcal S_2)$ satisfying the same properties as in the case of lower energies. It is one of our main goals in this paper to show that the retrograde orbit and the two Lyapunov orbits completely organize the flow on $\mathcal M_E$ despite its chaotic motion. To explain the meaning of organized flow, we need the following definition.

\begin{defi}[$2-3-2$ foliations] \label{def: 2-3-2 foliation}
For every $E-E_c>0$ sufficiently small, consider the compact subset $\mathcal M_E$ of the regularized $H^{-1}(E)$ that is bounded by $\mathcal S_1 \cup \mathcal S_2$. A $2-3-2$ foliation of $\mathcal{M}_E$, adapted to the dynamics of $\mathcal M_E$ and the orbit set $\mathcal{P}:=\{P_{3, E}, P_{2,1, E}, P_{2,2, E}$\}, is a singular foliation $\mathcal F$ of $\mathcal{M}_E$ whose singular set is given by $\cup_{P\in \mathcal P} P.$ The regular leaves of $\mathcal F$ are transverse to the flow and consist of the hemispheres of $\mathcal S_j\setminus P_{2,j, E}, j=1,2,$ two one-parameter families of planes asymptotic to $P_{3, E}^2$, and two rigid cylinders with a positive end at $P_{3, E}^2$ and a negative end at $P_{2,j, E},j=1,2$. At their ends, the families of planes break at a rigid cylinder and a hemisphere in $\partial \mathcal M_E$. The orbits in $\mathcal P$ are called the binding orbits of $\mathcal F$. See Figure \ref{fig: 2-3-2 foliation}.
\end{defi}

\begin{figure}[hbpt]
\centering  \includegraphics[width=.5\linewidth]{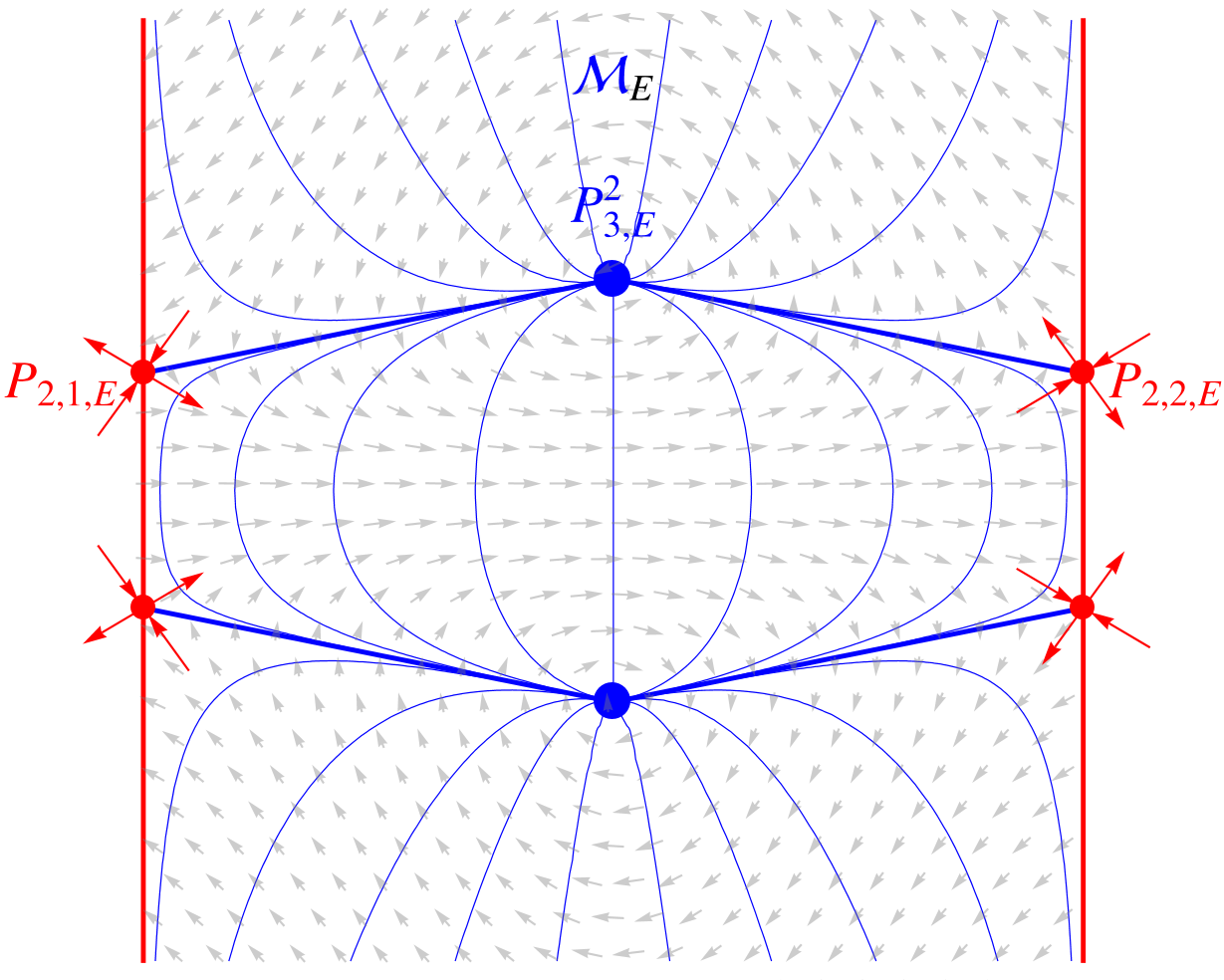}
\caption{The $2-3-2$ foliation on the regularized component $\mathcal{M}_E$ for energies slightly above the critical value $E_c$. The rigid cylinders (bold blue) have a positive end at the double cover of the retrograde orbit $P_{3,E}$ and a negative end at the Lyapunov orbit $P_{2,j,E}$ near the critical point $S_j$ for $j=1,2$. The union of the rigid planes asymptotic to $P_{2,j,E}$ (bold red) forms the two-spheres $\mathcal S_j\subset \partial \mathcal M_E$ for $j=1,2$.}
\label{fig: 2-3-2 foliation}
\end{figure}

Our second main result states that for $E-E_c>0$ sufficiently small, the regularized subset $\mathcal M_E$ admits a $2-3-2$ foliation. In particular, there exist infinitely many homoclinic or heteroclinic orbits in $\mathcal M_E$ connecting the Lyapunov orbits in $\partial \mathcal M_E$.

\begin{thm}\label{thm_main2}
For each $E-E_c>0$ sufficiently small, there exists a retrograde orbit $P_{3, E}\subset H^{-1}(E)$ and suitable regular two-spheres $\mathcal S_1, \mathcal S_2\subset H^{-1}(E)$ satisfying all properties above, so that the compact subset $\mathcal M_E$ of the regularized $H^{-1}(E)$ bounded by $\mathcal S_1\cup \mathcal S_2$ admits a $2-3-2$ foliation whose binding orbits are $P_{3,E}, P_{2,1,E}$ and $P_{2,2,E}$.

As a consequence of the $2-3-2$ foliation, $\mathcal M_E$ has infinitely many periodic orbits and infinitely many homoclinic orbits connecting the set of Lyapunov orbits $P_{2,1, E} \cup P_{2,2, E}$. Also, if the stable and unstable manifolds of $P_{2,1, E} \cup P_{2,2, E}$ do not coincide, then the flow on $\mathcal M_E$ has positive topological entropy.
\end{thm}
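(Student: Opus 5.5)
The plan is to follow the strategy announced in the introduction: first find a global symplectic change of coordinates in which $\mathcal K_h$ is strictly convex for $h$ up to $h_c$, and then apply the Hofer--Wysocki--Zehnder machinery for finite energy foliations on the $\Z_2$-symmetric three-sphere near $\mathcal K_{h_c}$, passing to the quotient $\mathcal M_E$.

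\textbf{Step 1: convexity and local dynamics near $S_j$.} I would take as given the convexity theorem stated later in the paper: after the explicit symplectic coordinate change, the bounded regularized component becomes strictly convex up to, and including, the critical level away from $S_1,S_2$. Near each $S_j$, which is a saddle-center, I would put the Hamiltonian into a Moser-type local normal form. This yields, for $E-E_c>0$ small:
\begin{itemize}
\item the Lyapunov orbit $P_{2,j,E}$, which is hyperbolic with Conley--Zehnder index $2$;
\item the two-sphere $\mathcal S_j$, whose hemispheres are rigid planes asymptotic to $P_{2,j,E}$.
\end{itemize}
These are obtained as in the saddle-center analysis of de Paulo--Salom\~ao. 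In the lifted $S^3$ with the antipodal action, the Lyapunov orbits come in symmetric pairs, and the double cover $P_{3,E}^2$ lifts to the symmetric retrograde orbit.

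\textbf{Step 2: index bounds.} Strict convexity up to $h_c$, combined with continuity, gives Conley--Zehnder index at least $3$ for every periodic orbit in $\mathcal M_E$ of action below a fixed bound, except for $P_{2,1,E}$ and $P_{2,2,E}$ and orbits very close to them. The action bound is taken above that of $P_{3,E}^2$. I would treat orbits near the saddle-centers through the local normal form: they either have index $\geq 3$ or are the Lyapunov orbits themselves, and none of them is linked with the binding in a forbidden way.

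I would also show that $P_{3,E}$ lifted to $S^3$ has index exactly $3$ and is unknotted and self-linking $-1$. For this I would deform from $E<E_c$, using Theorem \ref{thm_main1}, or from very negative energies where Hill's problem is close to the rotating Kepler problem, where the retrograde orbit is explicit and index $3$ is computable. I would then continue in $E$; the index cannot jump, because convexity forces nondegeneracy of the relevant index bounds.

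\textbf{Step 3: construction of the foliation.} This is the main obstacle. I would apply the construction of de Paulo--Salom\~ao for $2-3-2$ foliations near a critical level with a saddle-center, in its symmetric version. First, choose an almost complex structure $J$ in the symplectization, adapted to the normal form near $S_j$ and antipodally symmetric. Next, obtain the rigid planes asymptotic to $P_{2,j,E}$ that form $\mathcal S_j$. Then produce the rigid cylinders from $P_{3,E}^2$ to $P_{2,j,E}$. These come from the degeneration, as $E\to E_c^+$, of the open book of planes found for $E\le E_c$: the one-parameter family of planes asymptotic to $P_{3,E}^2$ must break, and the index bounds of Step 2 together with intersection theory (Siefring) force the breaking to consist exactly of a rigid cylinder plus a rigid plane in $\mathcal S_j$.

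The delicate points here are compactness and the exclusion of other breakings. Both rely on the index bounds and on the absence of low-action orbits linked with $P_{3,E}$. Fredholm and automatic transversality (Wendl) then give the two one-parameter families of planes, and positivity of intersections gives that all leaves are embedded and mutually disjoint. Projecting to $\mathcal M_E$ gives the $2-3-2$ foliation with binding $\{P_{3,E},P_{2,1,E},P_{2,2,E}\}$.

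\textbf{Step 4: dynamical consequences.} The transverse foliation defines return maps from the hemispheres of $\mathcal S_j$ to the hemispheres of $\mathcal S_1\cup\mathcal S_2$. The homoclinic and heteroclinic orbits are obtained as follows. The branches of the stable and unstable manifolds of $P_{2,j,E}$ inside $\mathcal M_E$ sweep the planes, and an area-preservation (Hamiltonian flux) argument forces their traces on a plane to intersect, repeatedly, as in de Paulo--Salom\~ao. This gives infinitely many homoclinic or heteroclinic orbits. Infinitely many periodic orbits follow from the Franks-type argument of de Paulo--Salom\~ao applied to the return map on the regions between the planes. Finally, if the invariant manifolds do not coincide, the transverse-section argument yields a horseshoe, hence positive topological entropy.
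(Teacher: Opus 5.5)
Your overall architecture matches the paper's. The paper proves Theorem~\ref{thm_main2} by verifying (H1)--(H3) of Theorem~\ref{thm_LS25} through Propositions~\ref{prop_contactform_J}, \ref{prop_retrograde} and \ref{prop_weakly_convex}, and then quotes the standard dynamical consequences. Your Step~2 is essentially Proposition~\ref{prop_weakly_convex}. However, two load-bearing steps fail as written.

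\textbf{First gap: existence of $P_{3,E}$ for $E>E_c$.} You never establish that $P_{3,E}$ exists above $E_c$, and this is part of the statement. Your continuation argument rests on ``convexity forces nondegeneracy'', which is false. Strict convexity only gives the lower bound $\mu_{\rm CZ}\geq 3$; it says nothing about the eigenvalue $1$ of the linearized return map. So no implicit function theorem is available, and a branch of retrograde orbits could terminate or bifurcate at or near $E_c$. Likewise, ``index exactly $3$'' is neither derivable this way nor needed: (H2) only asks for $\rho(P_{3,E}^2)>1$, which your Step~2 gives once the orbit is known to exist away from the necks.

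The paper obtains existence by pushing Birkhoff's shooting argument through $E_c$:
\begin{itemize}
\item A linear analysis at $S_1$ shows that the trajectory launched from $q_1(0)=-1+\delta_0$ returns to $q_2=0$ with $q_1(t_0)<0$ for all $E$ near $E_c$. Hence $q_{1,*}>-1+\delta_0$.
\item The shooting curve $\Gamma_1$ therefore runs from $(-\pi/4,0)$ to a collision endpoint with argument $>\pi/4$.
\item By reflection symmetry, $\Gamma_1$ must cross $\Gamma_2$ on $\{0\}\times(-\bar q_2,0)$, which produces a symmetric retrograde orbit.
\end{itemize}
This also yields what you need later: the orbits stay away from $S_1,S_2$, have bounded action, and bound $2$-disks of uniformly bounded $|d\alpha_E|$-area.

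\textbf{Second gap: the mechanism in Step~3.} ``Degeneration as $E\to E_c^+$ of the open book found for $E\leq E_c$'' is not a mechanism. The subcritical open books live on closed manifolds, and their pages cross every neighbourhood of the points where $S_1,S_2$ form. Meanwhile, for $E>E_c$ the forms $\alpha_E$ converge to $\alpha_{E_c}$ only in $C^\infty_{\rm loc}$ away from $S_1,S_2$ (Proposition~\ref{prop_contactform_J}(iv)). So, as stated, there is no family to carry across $E_c$.

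At each fixed $E>E_c$ you must first produce a plane asymptotic to $P_{3,E}^2$ intrinsically. In \cite{LS25} this comes from a Bishop family on a $2$-disk $\mathcal D$ for $P_{3,E}$. Its bubbling-off involves orbits of action up to $\mathcal S(\mathcal D,\alpha_E)=\int_{\mathcal D}|d\alpha_E|$. This is why the threshold in (H3) is that area, which is generally larger than $\mathcal A(P_{3,E}^2)=\int_{\mathcal D}d\alpha_E$. Your Step~2 threshold (the action of $P_{3,E}^2$) is therefore the wrong quantity. Any fixed threshold works in your contradiction argument only once you have the uniform area bound from Proposition~\ref{prop_retrograde}. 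The paper sidesteps all of this by quoting Theorem~\ref{thm_LS25}.

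\textbf{A further missing step.} (H1) needs an actual contact form on $\mathcal M_E$. The radial Liouville field from convexity is not transverse near the necks above $E_c$, because $dQ(Y_0)=Q+\lambda_2d_2w_2+\lambda_1d_3w_3$ has uncontrolled sign there. Proposition~\ref{prop_contactform_J} fixes this by interpolating with the local field $Y_1$ via an $\varepsilon$-dependent cutoff. Citing de Paulo--Salom\~ao for this is reasonable, but it must be done, since the hemispheres of $\mathcal S_j$ are holomorphic planes only for that contact form.
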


The foliations obtained in Theorems \ref{thm_main1} and \ref{thm_main2} arise as finite energy foliations in the sense of Hofer, Wysocki and Zehnder. In fact, the Hamiltonian vector field on $\mathcal M_E$ can be regarded as a Reeb vector field associated with a contact form $\alpha_E$ on $\mathcal M_E$. Its symplectization is the manifold $\R \times \mathcal M_E$ equipped with the symplectic form $d(e^a \alpha_E)$, where $a$ is the $\R$-coordinate. For suitable almost complex structures $J_E$ on $\R \times \mathcal M_E$, the leaves of the open book decompositions and the $2-3-2$ foliations obtained in those theorems are the projections to $\mathcal M_E$ of pseudo-holomorphic curves $\tilde u: (\dot \Sigma,j) \to (\R \times \mathcal M_E, J_E)$ defined on punctured Riemann surfaces $(\dot \Sigma,j)$ and satisfying the Cauchy-Riemann equation $d\tilde u \circ j = J_E(\tilde u) \circ d\tilde u$. It turns out that if such a curve has finite energy, then it behaves nicely near the punctures converging to periodic orbits of $\alpha_E$. Moreover, much can be said about their asymptotics, compactness properties, local deformations, algebraic invariants, etc. This theory was initiated by Hofer \cite{Hofer1993} in 1993 and was later fully developed by Hofer, Wysocki, and Zehnder \cite{char1, char2, HWZI, HWZII, HWZIII, HWZ98, HWZ03}. Theorems \ref{thm_main1} and \ref{thm_main2} strongly rely on the results built on this machinery.

\begin{rem}\label{rem: quotient}
Taking the quotient of $\mathcal M_E$ with respect to the extra antipodal symmetry of Hill's lunar problem in the original coordinates, one obtains $\overline{\mathcal M}_E = \mathcal M_E / \Z_2$ for every $E-E_c>0$ sufficiently small. Then $\overline{\mathcal M}_E$ is diffeomorphic to the lens space $L(4,3)$ with an open $3$-ball removed, whose boundary coincides with $\mathcal S_1 \sim \mathcal S_2$. In this case, the retrograde orbit $P_{3,E}$ projects to a $4$-unknotted periodic orbit $\overline{P}_{3,E}\subset \overline{\mathcal M}_E \setminus \partial \overline{\mathcal M}_E$. The proof of Theorem \ref{thm_main2} shows that the $2-3-2$ foliation $\mathcal F$ can be taken symmetric with respect to the antipodal symmetry and thus $\overline{\mathcal M}_E$ admits a $3-2$ foliation $\overline{\mathcal F}$ given by the natural $2$-to-$1$ projection of $\mathcal F$ to $\overline{\mathcal M}_E$. Its binding is $\overline{P}_{2, E}:=P_{2,1, E} \sim P_{2,2, E}$ and $\overline{P}_{3, E}$, and the regular leaves of $\overline{\mathcal F}$ consist of a family of planes asymptotic to the $4$-fold cover $\overline{P}_{3, E}^4$ of $\overline{P}_{3, E}$,  a rigid cylinder with a positive end at $\overline{P}_{3, E}^4$ and a negative end at $\overline{P}_{2, E}$, and a pair of rigid planes asymptotic to $\bar P_{2,E}$ projecting to the hemispheres of $\mathcal S_1\sim \mathcal S_2$.
\end{rem}

\subsection{Main steps of the proofs}
The proof of both Theorems \ref{thm_main1} and \ref{thm_main2} follows from a convexity property of the energy surfaces $\mathcal M_E, E\leq E_c,$ combined with results from \cite{HS2, HSW2023, LS25} establishing finite energy foliations with prescribed binding orbits.

We begin by explaining the geometric property.

\begin{defi}
Let $\mathcal M\hookrightarrow \R^4$ be an embedded topological three-sphere, possibly with a finite set $S$ of singularities, and smooth at the remaining points. We say that $\mathcal M$ is strictly convex if it bounds a convex subset of $\R^4$ and all the sectional curvatures of $\mathcal M\setminus S$ are positive. In particular, any hyperplane tangent to $\mathcal M$ at a regular point $p \in \mathcal M \setminus S$ has contact of order $1$ with $\mathcal M$ and intersects $\mathcal M$ only at $p$.
\end{defi}

The component $\mathcal M_E\simeq \R P^3$ of the regularized $H^{-1}(E), E\leq E_c,$ lifts to a topological three-sphere $\widetilde{\mathcal M}_E:=\mathcal K_{h(E)} \subset K^{-1}(h(E))$, and it is natural to ask whether $\widetilde{\mathcal M}_E$ is strictly convex. If this is the case, then many dynamical properties of the flow on $\mathcal M_E$ follow from the theory of pseudo-holomorphic curves, including, in particular, the Birkhoff conjecture.

However, numerical evidence shows that strict convexity does not always hold on $\widetilde{\mathcal M}_E$ for energies $E \leq E_c$ sufficiently close to $E_c$. In fact, the singular three-sphere $\widetilde{\mathcal M}_{E_c}$ has a conical behavior near its singularities $S_1, S_2 \in \widetilde{\mathcal M}_E$ and some sectional curvatures always approach zero at the singularities. Indeed, the sectional curvatures depend on higher-order terms of the Hamiltonian at the singularities, and in general, one cannot expect their positivity. This is the case of $\widetilde{\mathcal M}_E$, for $E\leq E_c$ sufficiently close to $E_c$.

Fortunately, we have some flexibility. We may consider symplectic diffeomorphisms $\Phi:(\R^4, \omega_0) \to (\R^4, \omega_0)$, $\omega_0 = \sum_{i=1,2} dy_i \wedge dx_i$, which preserve the Hamiltonian structure but may alter the convexity properties of the hypersurfaces.

Our first finding in this regard is the existence of suitable symplectic coordinates on the whole $\R^4$ so that the corresponding lifts $\widetilde{\mathcal M}_E$ are strictly convex for every $E \leq E_c$.

\begin{prop}\label{prop_strict_convexity}
There exists a smooth symplectic diffeomorphism $\Phi:\R^4 \to \R^4$  so that
$$
\widetilde {\mathcal M}_E':=\Phi(\widetilde{\mathcal M}_E) \mbox{ is strictly convex for every } E \leq E_c.
$$
More precisely, $\Phi$ is a symplectic shear, explicitly given in coordinates $(y,x)$ by
$$
\Phi(y,x) = (y +\nabla G_0(x),x), \qquad \forall (y,x) \in \R^4,
$$
where the generating function is $G_0(x):=\frac{2}{3}(x_1^2-x_2^2)x_1x_2$ for every $x\in \R^2.$ The new Hamiltonian becomes
\begin{equation}\label{new_Hamiltonian}
\begin{aligned}
\hat K(y,x):= K\circ \Phi\left(y,x \right) = \frac{(y_1-\frac{8}{3}x_2^3)^2}{2}+ \frac{(y_2+\frac{8}{3}x_1^3)^2}{2}+2(x_1^2+x_2^2)(2-3(x_1^2-x_2^2)^2).
\end{aligned}
\end{equation}
\end{prop}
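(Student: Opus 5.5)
We first check that $\Phi$ is symplectic and that $\hat K$ has the stated form; then we prove strict convexity of the components $\hat{\mathcal K}_h:=\Phi(\mathcal K_h)$, $0<h\le h_c$, through the Hessian of $\hat K$.

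\textbf{Step 1: the formula.} Since $\Phi$ is a shear $y\mapsto y+\nabla G_0(x)$ by a gradient, it pulls back $\omega_0$ to $\omega_0+\sum_{i,j}\partial_{ij}G_0\,dx_j\wedge dx_i=\omega_0$, by symmetry of the Hessian of $G_0$. Writing $G_0=\frac23(x_1^3x_2-x_1x_2^3)$ gives
\[
\partial_1G_0=2x_1^2x_2-\tfrac23x_2^3,\qquad \partial_2G_0=\tfrac23x_1^3-2x_1x_2^2 .
\]
Hence
\[
-2|x|^2x_2+\partial_1G_0=-\tfrac83x_2^3,\qquad 2|x|^2x_1+\partial_2G_0=\tfrac83x_1^3,
\]
which is \eqref{new_Hamiltonian}.

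\textbf{Step 2: reduction to a Hessian estimate.} Set $u=y_1-\frac83x_2^3$, $v=y_2+\frac83x_1^3$ and $W(x)=2|x|^2(2-3(x_1^2-x_2^2)^2)$. The map $(y,x)\mapsto(u,v,x)$ is a diffeomorphism, but it is not linear, so convexity does not transfer directly. We therefore work with the Hessian of $\hat K=\frac12u^2+\frac12v^2+W$ in the original coordinates, which is
\[
D^2\hat K=\nabla u\otimes\nabla u+\nabla v\otimes\nabla v+u\,D^2u+v\,D^2v+D^2W .
\]
Here $D^2u=-16x_2\,e_{x_2}\otimes e_{x_2}$ and $D^2v=16x_1\,e_{x_1}\otimes e_{x_1}$.

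Strict convexity of the regular part of the level set is equivalent to positivity of $D^2\hat K$ restricted to $T\hat{\mathcal K}_h=\ker d\hat K$, since $d\hat K\neq0$ there. One also needs the sublevel component to be star-shaped, which gives that it bounds a convex set; this should follow from star-shapedness (cf.\ \cite{AFKP12}) transported by $\Phi$, or from a direct radial argument. Using the identity $|\xi\cdot\nabla\hat K|^2\ge0$, the tangency condition lets us replace $\nabla u\otimes\nabla u+\nabla v\otimes\nabla v$ by any convenient combination. A clean sufficient criterion is the bordered-Hessian test: $\hat K$ is quasi-convex if $D^2\hat K+\lambda\,\nabla\hat K\otimes\nabla\hat K>0$ for some $\lambda\ge0$ on the level set. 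I would aim to prove this with $\lambda$ a function of the point.

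\textbf{Step 3: the key inequality.} Write $\xi=(\eta,\zeta)\in\R^2_y\times\R^2_x$. The quadratic form becomes
\[
Q(\xi)=(\eta_1-8x_2^2\zeta_2)^2+(\eta_2+8x_1^2\zeta_1)^2+16(vx_1\zeta_1^2-ux_2\zeta_2^2)+D^2W(\zeta,\zeta).
\]
The $y$-directions are positive definite. Minimizing over $\eta$ subject to tangency
\[
u(\eta_1-8x_2^2\zeta_2)+v(\eta_2+8x_1^2\zeta_1)+\nabla W\cdot\zeta=0
\]
gives
\[
\min_\eta Q=\frac{(\nabla W\cdot\zeta)^2}{u^2+v^2}+16(vx_1\zeta_1^2-ux_2\zeta_2^2)+D^2W(\zeta,\zeta).
\]
Since $u^2+v^2=2(h-W)$ on the level set, positivity of $Q$ on the tangent space reduces to showing that, for all $x$ in the Hill region $\{W\le h\}$ and all $(u,v)$ with $u^2+v^2=2(h-W)$, the $2\times2$ matrix
\[
M=\frac{\nabla W\otimes\nabla W}{2(h-W)}+D^2W+16\,\mathrm{diag}(vx_1,-ux_2)
\]
is positive definite.

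The worst case over the circle of $(u,v)$ is explicit. By homogeneity and the $\mathbb Z_4$-type symmetries of $W$ (invariance under $x\mapsto(x_2,-x_1)$ composed with $u,v$ swaps), we can restrict to a sector, and a polar-coordinates computation $x=r(\cos\theta,\sin\theta)$ then yields polynomial inequalities in $r^2$ and $\cos4\theta$.

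\textbf{Main obstacle.} The hard part is the region near the singular points $S_1,S_2$ at $h=h_c$ and near the Hill boundary $W=h$. Where $\nabla W\neq0$, the rank-one term blows up in the $\nabla W$ direction, so only the direction orthogonal to $\nabla W$ must be checked. That check amounts to positive curvature of the Hill curve together with control of the bounded term $16\,\mathrm{diag}(vx_1,-ux_2)$.

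At the saddle points, $\nabla W=0$ and $h-W\to0$. We must show $D^2W+16\,\mathrm{diag}(vx_1,-ux_2)$ stays positive to leading order, and the cubic terms of $G_0$ were presumably chosen precisely to fix this, turning the conical degeneracy of $\widetilde{\mathcal M}_{E_c}$ into a strictly convex cone. I would first verify by expansion at $S_j$, then treat the remaining compact region by explicit polynomial bounds, possibly interval-arithmetic-style case splitting in $(r^2,\cos4\theta)$. Finally, boundedness of the convex region follows by monotonicity of $\hat{\mathcal K}_h$ in $h$.
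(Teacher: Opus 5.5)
Your Steps 1--3 are correct, and the reduction you derive is exactly the criterion the paper imports from \cite{LS25}. With $\rho^2=u^2+v^2=2(h-W)$, $c=W_{11}+16vx_1$ and $d=W_{22}-16ux_2$, one has $\rho^2\det M=\rho^2cd+cW_2^2+dW_1^2-\rho^2W_{12}^2-2W_{12}W_1W_2$, which is the function $G$ there. The gap is that the proposal stops where the proof of the proposition begins. Positivity of $M$ for every $0<h\le h_c$, every $x$ in the Hill region off $\Pi_x(S)$ and every $(u,v)$ on the circle is never established; ``explicit polynomial bounds, possibly interval arithmetic'' is a plan, not an argument.

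Several claims in that plan are also inaccurate. The exact minimum over the $(u,v)$-circle is not explicit: it leads to a quartic in the angle. $W$ is not homogeneous. The resulting inequalities are not polynomial, because $\rho=\sqrt{2(h-W)}$ enters to odd powers through the magnetic term.

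The first thing you need is a way to remove the angle and the energy. In your matrix language this is easy. On the quadrant $x_1,x_2\ge0$ one has $M\succeq N_h:=\rho_h^{-2}\,\nabla W\otimes\nabla W+D^2W-16\rho_h\,\mathrm{diag}(x_1,x_2)$ for all $(u,v)$ on the circle. Moreover $N_h\succeq N_{h_c}$ pointwise, since $\rho_h\le\rho_{h_c}$. So everything reduces to $N_{h_c}\succ0$ at the critical level; boundary points of the smaller Hill regions are covered by evaluating $N_{h_c}$ on $\nabla W^\perp$. The paper reaches the same reduction through $c_{h,t}\ge c_{h,-1}$, $d_{h,s}\ge d_{h,1}$ and a monotonicity-in-$h$ lemma. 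What remains, and is the real content of the paper's proof, is an actual proof of this critical-level inequality. The paper passes to blow-up coordinates $(c,l_2)$ in which $\h_c-\hat V$ factors as $(1-l_2)^2(1+3c+cl_2+c^2l_2)$. It then replaces the remaining square root by polynomial upper bounds, legitimate because its coefficient is nonpositive, and exhibits sign-definite polynomial decompositions.

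Your heuristic at the singular points is wrong, and this is exactly where the difficulty lies. At $\Pi_x(S)$ one has $\nabla W=0$, $u,v\to0$ and $D^2W$ indefinite; it is $\mathrm{diag}(-32,32/3)$ at $(2^{1/4}/\sqrt3,0)$. So $D^2W+16\,\mathrm{diag}(vx_1,-ux_2)$ cannot be positive there. Positivity must come from the rank-one term, which stays of order one because $|\nabla W|^2$ and $h_c-W$ both vanish quadratically. Take the quadratic model $W=h_c-\frac{\alpha}{2}\delta_1^2+\frac{\beta}{2}\delta_2^2$ with $\delta=x-\Pi_x(S_j)$ and $\alpha,\beta>0$. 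Then $\frac{\nabla W\otimes\nabla W}{2(h_c-W)}+D^2W=\frac{\alpha\beta}{\alpha\delta_1^2-\beta\delta_2^2}\,\delta^\perp\otimes\delta^\perp$. This is rank one, with kernel the radial direction $\delta$, i.e.\ the flat generators of the cone.

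Consequently $G=\rho^2\det M$, and likewise $\rho^2\det N_{h_c}$, vanishes to third order at $S$. For the latter this is the factor $(1-l_2)^3$ in the paper's identity $\frac{256\sqrt2}{3}\hat E_c=(1-l_2)^3\hat E_{c,1}$. Strict positivity nearby is decided by the direction-dependent cubic coefficient. There the cubic terms of $W$ compete with a magnetic contribution of size $O(\sqrt{h_c-W})$ whose sign changes around the $(u,v)$-circle. No compactness or interval-arithmetic argument certifies a quantity that vanishes at the singular point. One has to blow up and divide out the vanishing order, which is what the paper does by proving $\hat E_{c,1}>0$ up to the segment $l_2=1$ lying over the singular point.

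A minor point: $\Phi$ is not homogeneous, so there is no reason for it to preserve star-shapedness. You do not need it. Hadamard's theorem gives convexity for $h<h_c$, and the critical component bounds the closure of the increasing union of these convex bodies.
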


The proof of Proposition \ref{prop_strict_convexity} is given in Section \ref{sec_convexity}. Strict convexity has been established for several Hamiltonian systems, see \cite{AFFHvK, dPS1, dPS2, GRS, HLOSY, JvK, Kim1, LS25, Salomao2004, Sch}. In many cases, it provides the geometric mechanism that connects concrete models in celestial mechanics with the theory of pseudo-holomorphic curves, finite energy foliations, and global surfaces of section.

We now explain how Proposition \ref{prop_strict_convexity} is used to prove Theorems \ref{thm_main1} and \ref{thm_main2}.
For energies $E\leq E_c$, one of the implications of Proposition \ref{prop_strict_convexity} is that $\mathcal M_E$ is {\bf dynamically convex}, that is, all of its contractible periodic orbits have Conley-Zehnder index at least $3$. This follows from well-known results in \cite{HWZ98} and \cite{Long02}.

\begin{cor}\label{cor_rp3}
    For every $E\leq E_c$, $\mathcal M_E$ is dynamically convex.
\end{cor}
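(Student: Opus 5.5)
Our plan is to derive the corollary directly from Proposition \ref{prop_strict_convexity}, using the Hofer--Wysocki--Zehnder result that a strictly convex hypersurface in $\R^4$ is dynamically convex, and then to pass to the quotient by the antipodal map. For $E<E_c$, Proposition \ref{prop_strict_convexity} gives a symplectic diffeomorphism $\Phi$ such that $\widetilde{\mathcal M}_E'=\Phi(\widetilde{\mathcal M}_E)$ is a smooth strictly convex hypersurface in $(\R^4,\omega_0)$. Since $\Phi$ is symplectic, the Hamiltonian flow of $K$ on $\widetilde{\mathcal M}_E$ is conjugated to the flow of $\hat K$ on $\widetilde{\mathcal M}_E'$. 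This flow is a reparametrization of the characteristic line field, which is in turn the Reeb flow of the restriction of the Liouville form $\lambda_0=\frac12\sum (y_i\,dx_i-x_i\,dy_i)$ centered at an interior point. After a translation we may take that point to be the origin, and strict convexity makes the radial vector field transverse to $\widetilde{\mathcal M}_E'$. By \cite{HWZ98}, every periodic orbit of this Reeb flow has Conley--Zehnder index at least $3$, computed in a global trivialization of $\xi$ over $S^3$. The underlying reason is that positive sectional curvature makes the linearized flow rotate by more than $2\pi$ in the symplectic normal frame of the second fundamental form.

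To obtain the statement for $\mathcal M_E\equiv \R P^3$, we use that $\Phi$ commutes with the antipodal map $(y,x)\mapsto(-y,-x)$, because $\nabla G_0$ is odd ($G_0$ is homogeneous of degree $4$). Hence $\widetilde{\mathcal M}_E'$ is antipodally symmetric and $\mathcal M_E$ is its quotient. A contractible periodic orbit $P\subset \mathcal M_E$ lifts either to a closed orbit $\tilde P$ with the same period, or to a path from $z$ to $-z$. The latter case is excluded, since such a path projects to a noncontractible loop. The trivialization of $\xi$ on $S^3$ is invariant under the antipodal action, so it descends and induces the trivialization along contractible loops in $\R P^3$. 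Therefore $\mu_{CZ}(P)=\mu_{CZ}(\tilde P)\ge 3$. Iterates are handled the same way, and Long's index iteration theory \cite{Long02} gives the index bounds for all iterates, including those of orbits that are noncontractible but whose double cover is contractible.

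The critical case $E=E_c$ is the main obstacle. Here $\widetilde{\mathcal M}_{E_c}'$ is singular at $S_1,S_2$, which are rest points, so every periodic orbit stays in a compact region of regular points. We would argue in two steps.

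\textbf{Local convexity.} Near any periodic orbit, the definition of strict convexity still gives positive sectional curvatures, because the orbit lies in the regular set. The index estimate of \cite{HWZ98} is local along the orbit: it only uses the second fundamental form along $P$ and the winding relative to a trivialization that extends over a disk. It therefore still applies.

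\textbf{Extending the trivialization.} We must check that the global trivialization of $\xi$ makes sense. We would take it from the quaternionic frame restricted to the regular part; this frame is defined on all of $\R^4\setminus\{0\}$ once the radial field is transverse. Alternatively, the index can be obtained by continuity from $E<E_c$, approximating $\widetilde{\mathcal M}_{E_c}'$ by the nearby strictly convex levels with the orbit persisting in the nondegenerate case. Degenerate orbits would be handled by lower semicontinuity of the lower index bound.
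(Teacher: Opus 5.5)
Your proposal is correct and follows the paper's route. You obtain strict convexity from Proposition~\ref{prop_strict_convexity}, apply the index bound $\mu_{CZ}\ge 3$ of \cite{HWZ98} (with \cite{Long02}), and descend to $\mathcal M_E\equiv\R P^3$ via the odd shear and the antipodally invariant frame; for $E=E_c$, your local argument along orbits in the simply connected, globally framed regular part is what the paper's one-line citation implicitly relies on. Discard your fallback ``continuity from $E<E_c$'', however: lower semicontinuity only bounds the index of a limit orbit from above (which is how the paper uses it in Section~\ref{sec_weakly_convex}), and orbits at $E_c$ need not be limits of orbits at lower energies.
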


Birkhoff's retrograde orbit conjecture in Hill's lunar problem, as stated in Theorem \ref{thm_main1}-(i), follows from a combination of Corollary \ref{cor_rp3} and the following theorem, which generalizes the results in \cite{char1,char2,hryn2,HS1} to Reeb flows on the universally tight $\R P^3$.

\begin{thm}[Hryniewicz-Salom\~ao {\cite[Corollary 1.8]{HS2}}]\label{thm_rp3}
Let $\alpha$ be a dynamically convex contact form on $\R P^3$. Let $P\subset \R P^3$ be a $2$-unknotted periodic orbit of $\alpha$ whose self-linking number is $-1/2$. Then $P$ binds an open book decomposition whose pages are $2$-disks for $P$, and each page is a global surface of section.
\end{thm}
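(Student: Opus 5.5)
The plan is to lift the problem to the double cover and reduce it to the Hofer–Wysocki–Zehnder theory on the tight three-sphere. Let $p:S^3\to\R P^3$ be the universal double cover and $\tilde\alpha=p^*\alpha$. Contractible orbits in $\R P^3$ lift to closed orbits whose indices are unchanged. Non-contractible orbits lift to orbits covering them twice. Hence dynamical convexity of $\alpha$ (index at least $3$ for contractible orbits, as in Corollary \ref{cor_rp3}) should give dynamical convexity of $\tilde\alpha$ on $S^3$. This needs one justification: $\tilde\alpha$ is tight because $\R P^3$ carries the universally tight structure. The lift of the $2$-unknotted orbit $P$ is then a single orbit $\tilde P$ covering $P$ twice (since $P$ is non-contractible), or possibly a pair of antipodal orbits. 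The $2$-disk for $P$ lifts to an embedded disk spanning $\tilde P$, so $\tilde P$ is unknotted. The self-linking number $-1/2$ of $P$ is defined via the $2$-disk and should double to $sl(\tilde P)=-1$.

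With $\tilde P$ established as an unknotted orbit of self-linking $-1$, I will invoke Hryniewicz's characterization \cite{hryn2}, which extends \cite{char1,char2}. For a dynamically convex tight $S^3$, it states that an unknotted orbit with $sl=-1$ binds an open book whose pages are disk-like global sections. Moreover, these pages are projections of embedded fast finite energy planes for a generic $\Z_2$-invariant almost complex structure.

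The main obstacle is making this open book $\Z_2$-equivariant, so that it descends to $\R P^3$. To handle it, I would work with the moduli space $\mathcal M$ of embedded fast planes asymptotic to $\tilde P$, using an almost complex structure $J$ invariant under the deck transformation $A$. The deck transformation acts on $\mathcal M/\R$, which is diffeomorphic to $S^1$. The planes' projections foliate $S^3\setminus\tilde P$, and $A$ has no fixed points, so the action is free and the foliation is preserved by $A$.

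Pages project to disks in $\R P^3$ whose boundary double covers $P$. This requires the projection of a plane $u$ and of $A\circ u$ to be disjoint, i.e., $A\circ u$ must be a different leaf. I would check this via intersection theory: two distinct leaves are disjoint, and $A$ acts on the leaf space $S^1$ as a rotation of order $2$. This fixed-point-free action should follow from the linking properties of the planes with $\tilde P$ and from the fact that $A$ rotates $\tilde P$ by half its period.

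The final step is to descend the fibration. The fibration $\tilde\Phi:S^3\setminus\tilde P\to S^1$ is equivariant for the half rotation, so it descends to a map $\Phi:\R P^3\setminus P\to S^1/\Z_2\cong S^1$. The global-section property of each page then follows downstairs from the global-section property upstairs.

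A genericity issue remains, since the $J$ used must be $\Z_2$-invariant. I expect this is fine because fast planes in dynamically convex forms are automatically regular by automatic transversality in dimension four.
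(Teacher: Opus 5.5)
The paper gives no proof of this statement; it is quoted from \cite{HS2}. Your reduction to the double cover $p:S^3\to\R P^3$ is a viable route, and much of the bookkeeping is right. Every closed orbit of $\tilde\alpha$ projects to a contractible orbit of $\alpha$ with the same Conley--Zehnder index, since capping disks lift; hence $\tilde\alpha$ is dynamically convex. Once $\tilde P$ is known to be connected, the $2$-disk lifts to an embedded disk $\tilde u$ spanning $\tilde P$, and its interior is disjoint from that of $A\circ\tilde u$. Since $\tilde u$ and $A\circ\tilde u$ are both Seifert disks for $\tilde P$, the intersections of the push-off of $P^2$ with $u=p\circ\tilde u$ split into two groups, each counting ${\rm sl}(\tilde P)$. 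So ${\rm sl}(P)=2\,{\rm sl}(\tilde P)/4$ and ${\rm sl}(\tilde P)=-1$. As written, however, three steps are not justified.

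(i) You leave open that $P$ might lift to two antipodal orbits, and your argument needs this excluded. It can be excluded. If $P$ were contractible, the lifted disk would be an immersed disk whose boundary double covers a single component $\tilde P_1$, with interior embedded in $S^3\setminus\tilde P_1$. Pushing its boundary slightly into the disk gives a curve that bounds in $S^3\setminus\tilde P_1$. Yet this curve has odd linking number with $\tilde P_1$, because the two sheets of the disk along $\tilde P_1$ are interchanged after one turn, which is a contradiction.

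(ii) Your reason why $A$ fixes no leaf does not work. Linking with $\tilde P$ and a half-period shift along $\tilde P$ are compatible with an invariant disk; think of a rotation by $\pi$ of a disk about its centre. What rules it out is Brouwer's theorem: if $A(F)=F$ for a page $F$, then $A$ maps the closed disk $\overline F=F\cup\tilde P$ into itself and has a fixed point, contradicting freeness. So $A$ induces a free smooth involution of $\mathcal M/\R\cong S^1$, which is conjugate to the half-rotation, and your descent of $\tilde\Phi$ then goes through.

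(iii) Automatic transversality only gives regularity of the fast planes for your $A$-invariant $J$. You also need that a fast plane asymptotic to $\tilde P$ exists for that particular $J$, and that the family is compact. In the dynamically convex setting these rest on winding-number and index arithmetic rather than on genericity. For instance, a limit whose top level has one fast positive end at $\tilde P$ and $k\ge 1$ negative ends at orbits with $\mu_{\rm CZ}\ge 3$ would have ${\rm wind}_\pi\le -k<0$. So the point can be settled, but you must invoke the $S^3$ result in a form valid for every $J\in\mathcal J(\tilde\alpha)$, degenerate case included, not just for a generic $J$.

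For comparison, the result can also be obtained directly on $\R P^3$, in the spirit of \cite{HLS2014}. The condition ${\rm sl}(P)=-1/2$ provides a $2$-disk whose characteristic foliation has a single nicely elliptic singularity, as the paper recalls after Proposition~\ref{prop_retrograde}. A Bishop family started there degenerates, under dynamical convexity, into fast planes asymptotic to $P^2$, whose projections are already the $2$-disk pages. That route needs no covering space, no symmetric $J$ and no equivariance argument, and universal tightness comes out as a consequence. Yours reuses the $S^3$ theory verbatim and isolates the new work in the topology of the lift and the equivariance. Its cost is that it presupposes the lifted contact structure is tight. This is harmless in the paper's application, where the lift is a convex hypersurface of $\R^4$, but it is not literally among the hypotheses.
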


For the definition of the (rational) self-linking number of a simple periodic orbit in $\R P^3$, see Section \ref{sec: Reeb dynamics and pseudo-holomorphic curves}. Global surfaces of section with more than one boundary component were considered in \cite{HSW2023}. The following statement is a particular case of the results in \cite{HSW2023} concerning the existence of annulus-like global surfaces of section bounded by a pair of periodic orbits forming a Hopf link.

\begin{thm}[Hryniewicz-Salom\~ao-Wysocki {\cite[Theorem 1.17]{HSW2023}}]\label{thm_annulus} Let $\alpha$ be a dynamically convex contact form on $\R P^3$. Let $P, \hat P\subset \R P^3$ be a pair of periodic orbits in $\R P^3$ forming a Hopf link, i.e., both $P$ and $\hat P$ are $2$-unknotted with self-linking number $-1/2$, and their lift to $S^3$ has linking number $+1$. Then the Hopf link $P\cup \hat P$ is the binding of an open book decomposition whose pages are annulus-like global surfaces of section.
\end{thm}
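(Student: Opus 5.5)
The plan is to reduce the statement to the corresponding result on the three-sphere and then use the $\Z_2$-symmetry to pass to the quotient. Let $\pi:S^3\to\R P^3$ be the double cover and $\tilde\alpha=\pi^*\alpha$.

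\textbf{Step 1: the lifted form is dynamically convex.} Periodic orbits of $\tilde\alpha$ are lifts of orbits of $\alpha$. A contractible orbit of $\alpha$ lifts to two antipodal copies with the same Conley--Zehnder index. A non-contractible orbit $Q$ lifts to a closed orbit projecting onto $Q^2$, and $Q^2$ is contractible in $\R P^3$. So every periodic orbit of $\tilde\alpha$ projects to a contractible orbit of $\alpha$, the trivializations of $\xi$ correspond, and dynamical convexity of $\alpha$ gives index $\geq 3$ for all orbits of $\tilde\alpha$.

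\textbf{Step 2: the lifts form a Hopf link.} Since $P$ and $\hat P$ are $2$-unknotted with rational self-linking $-1/2$, their preimages $\tilde P=\pi^{-1}(P)$ and $\tilde{\hat P}=\pi^{-1}(\hat P)$ are single unknotted orbits with self-linking number $-1$. The lift of a $2$-disk is a genuine spanning disk, and the self-linking number doubles under the lift. By hypothesis their linking number is $+1$, so $\tilde P\cup\tilde{\hat P}$ is a Hopf link in $S^3$.

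\textbf{Step 3: construct the $\Z_2$-invariant open book on $S^3$.} I would build the annulus open book by pseudo-holomorphic curves in the symplectization $\R\times S^3$, with an almost complex structure $\tilde J$ that is invariant under the antipodal deck transformation. Following Hryniewicz and Hofer--Wysocki--Zehnder, dynamical convexity together with unknottedness and $\mathrm{sl}=-1$ gives a family of embedded fast finite energy planes asymptotic to $\tilde P$, projecting to the pages of a disk open book. The orbit $\tilde{\hat P}$ is transverse to these pages and links $\tilde P$ once. One then seeks embedded cylinders with positive puncture at $\tilde P$ and negative puncture at $\tilde{\hat P}$, of Fredholm index $2$, with winding numbers at both ends forced to be extremal by the index bounds. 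I would obtain one such cylinder by stretching or degenerating the planes near $\tilde{\hat P}$ and applying SFT compactness: the limit building must consist of a cylinder from $\tilde P$ to $\tilde{\hat P}$ followed by a plane at $\tilde{\hat P}$. Any other splitting is excluded because each orbit has index $\geq 3$ and the linking constraints rule out further breaking. The moduli space of such cylinders then forms an $\R\times S^1$-family whose projections foliate $S^3\setminus(\tilde P\cup\tilde{\hat P})$ by annuli transverse to the Reeb flow. Positivity of intersections and the winding estimates show the projections are embedded and pairwise disjoint, which gives the open book fibration. The global surface of section property follows by the standard argument: an orbit that never returns to the pages would have an omega-limit set whose existence contradicts the linking and index information.

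\textbf{Step 4: descend to $\R P^3$.} Since $\tilde J$ is invariant, the antipodal map acts on the moduli space, and one can choose the fibration $S^3\setminus(\tilde P\cup\tilde{\hat P})\to S^1$ equivariantly, with the deck map acting on the $S^1$ factor by rotation by $\pi$. Each page is then mapped to a different page and projects injectively. The quotient therefore gives a fibration of $\R P^3\setminus(P\cup\hat P)$ whose pages are annuli with boundary double covering $P$ and $\hat P$, each a global surface of section.

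\textbf{Main obstacle.} The hardest step is Step 3 under the constraint that $\tilde J$ be symmetric, since generic transversality is not available within the class of invariant structures. I would try first to use Wendl's automatic transversality: for embedded genus-zero curves in dimension four with these index and asymptotic data, the relevant inequality holds automatically. This makes all moduli spaces regular for any invariant $\tilde J$, and compactness then uses only the index bounds from Step 1.
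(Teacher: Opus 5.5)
The paper gives no proof of this statement. It is quoted from \cite[Theorem 1.17]{HSW2023} and used as a black box to obtain Theorem~\ref{thm_main1}-(ii). Your Steps 1 and 2 are correct: the lift is dynamically convex, and the lifts form a Hopf link of unknots with self-linking $-1$. Step 3, however, constructs the wrong object, and the objects it constructs need not exist.

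\textbf{Step 3: wrong curves.} A cylinder with a positive puncture at $\tilde P$ and a negative puncture at $\tilde{\hat P}$ projects to an annulus with oriented boundary $\tilde P-\tilde{\hat P}$. Its binding is therefore the \emph{negative} Hopf link. The pages in the theorem have both boundary components oriented by the Reeb flow, so as holomorphic curves they would be cylinders with two positive punctures.

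\textbf{Step 3: your cylinders need not exist.} Their Fredholm index is $\mu_{\mathrm{CZ}}(\tilde P)-\mu_{\mathrm{CZ}}(\tilde{\hat P})$, which dynamical convexity does not control. In the nondegenerate case, index $2$ together with $\mathrm{wind}_\pi=0$ forces $\mu_{\mathrm{CZ}}(\tilde P)=5$ and $\mu_{\mathrm{CZ}}(\tilde{\hat P})=3$, and energy also requires $\mathcal A(\tilde P)>\mathcal A(\tilde{\hat P})$.

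Geometrically, such an annulus induces the Seifert framing $+1$ at both ends, i.e. winding $2$ in a global trivialization of $\xi$. It can therefore be a global section only if $\rho(\tilde P)>2>\rho(\tilde{\hat P})$. This fails, for example, when both rotation numbers lie in $(1,2)$, which is compatible with every hypothesis. Note also that the hypotheses are symmetric in $P$ and $\hat P$, while your construction is not.

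The correct pages induce the Seifert framing $-1$, i.e. winding $0$ at both positive ends. This is strictly below the extremal value $\lfloor\mu_{\mathrm{CZ}}/2\rfloor\geq 1$. So these pages are not the extremal, automatically index-$2$ curves of Hofer--Wysocki--Zehnder theory. Producing them, by curves or otherwise, needs an argument of a genuinely different nature, and the ``stretching near $\tilde{\hat P}$'' heuristic does not supply one. You also do not treat degenerate orbits, which dynamical convexity allows; passing to the limit from nondegenerate approximations is a nontrivial step.

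\textbf{Step 4: wrong deck action.} Let $\tau$ be the deck transformation and $\gamma$ a path from $x$ to $\tau x$. Set $\ell_1=\mathrm{lk}(\gamma\ast\tau\gamma,\tilde P)$ and $\ell_2=\mathrm{lk}(\gamma\ast\tau\gamma,\tilde{\hat P})$. Both are odd. The lift of a $2$-disk for $P$, together with its $\tau$-image, is a $\tau$-invariant sphere whose two sides $\tau$ exchanges, which forces $\ell_1$ odd; the same argument gives $\ell_2$ odd.

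The page class of any annulus open book with binding $\tilde P\cup\tilde{\hat P}$ takes the values $\pm1$ on the two meridians. Suppose the fibration is equivariant, $\Phi\circ\tau=\Phi+c$, and let $\tilde c=\int_\gamma d\Phi$ be a real lift of $c$. Since $d\Phi$ is $\tau$-invariant, $2\tilde c=\int_{\gamma\ast\tau\gamma}d\Phi=2\pi(\ell_1\pm\ell_2)\in 4\pi\Z$. Hence $c\equiv 0 \pmod{2\pi}$.

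So $\tau$ preserves every page, acting on it as a free rotation. The pages descend to embedded annuli whose boundary components map diffeomorphically onto $P$ and $\hat P$, which is the object used in Theorem~\ref{thm_main1}-(ii). Rotation by $\pi$ of the base circle is what happens for the disk pages of Theorem~\ref{thm_rp3}, not here. What you actually need is that each individual curve is $\tau$-invariant up to reparametrization.
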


Combining Corollary \ref{cor_rp3} and Theorem \ref{thm_annulus} we obtain Theorem \ref{thm_main1}-(ii).

Now we consider energies $E$ slightly above $E_c$ and explain the strategy to prove Theorem \ref{thm_main2}. Again, the convexity of the regularized critical level proved in Proposition \ref{prop_strict_convexity} plays a central role in the proof of Theorem \ref{thm_main2}.

The first step is to construct a contact form $\alpha_E$ on a suitable subset $\mathcal M_E$ of the regularization of $H^{-1}(E)$, and a compatible almost complex structure in the symplectization of this subset, so that each Lyapunov orbit bounds a pair of pseudo-holomorphic planes approaching it through opposite directions.

We denote by $\mathcal M_{E_c}$ the sphere-like compact subset of the regularization of $H^{-1}(E_c)$ containing $S_1$ and $S_2$ and given as the limit of $\mathcal M_E$ as $E \to E_c^-$. In original coordinates, this is the subset of the critical level $H^{-1}(E_c)$ projecting to the $q$-plane as a punctured disk about the origin, containing the singularities $(\pm 1,0)$ at its boundary, and contained in the strip $-1\leq q_1 \leq 1$.

The following proposition establishes the desired contact geometric and analytical properties of the energy surfaces slightly above the critical level.

\begin{prop}\label{prop_contactform_J} For each $E-E_c>0$ sufficiently small, there exist a compact subset $\mathcal M_E$ of the regularization of $H^{-1}(E)$, a contact form $\alpha_E$ on $\mathcal M_E$ and an almost complex structure $J_E$ on $\R \times \mathcal M_E$ adapted to $\alpha_E$ so that the following conditions hold:
\begin{itemize}
    \item[(i)] The Reeb vector field $R_E$ of $\alpha_E$ is parallel to the Hamiltonian vector field restricted to $\mathcal M_E$.

    \item[(ii)] $\mathcal M_E$ is diffeomorphic to $\R P^3$ with two disjoint open $3$-balls removed. The boundary of $\mathcal M_E$ consists of two $2$-spheres $\mathcal S_j=\mathcal S_{j,E}, j=1,2,$ with $\mathcal S_2 = -\mathcal S_1$, so that  $\mathcal S_j$ contains the Lyapunov orbit $P_{2,j,E}$ and each hemisphere of $\mathcal S_j \setminus P_{2,j,E}$ is transverse to $R_E$ pointing in opposite directions. Moreover ${\rm dist}(\mathcal S_j, S_j) \to 0$ as $E \to E_c^+.$ The projection of $\mathcal M_E$ to the $q$-plane contains the corresponding projection of $\mathcal M_{E_c}$.

    \item[(iii)] For each $j=1,2,$ the hemispheres of $\mathcal S_j \setminus P_{2,j,E}$ are the projections to $\mathcal M_E$ of $J_E$-holomorphic planes asymptotic to $P_{2,j,E}$ through opposite directions.

    \item[(iv)] There exists a contact form $\alpha_{E_c}$ on  $\mathcal M_{E_c}$ such that the contact forms $\alpha_{E}$ in (ii) converge in $C^\infty_{\text{loc}}(\mathcal M_{E_c} \setminus (S_1 \cup S_2))$ to $\alpha_{E_c}$ as $E \to E_c^+$.

\end{itemize}
\end{prop}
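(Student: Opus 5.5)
The approach follows the strategy of de Paulo--Salom\~ao and Liu--Salom\~ao for saddle-center levels (see \cite{dPS1, dPS2, LS25}). The contact form is built from the strictly convex picture of Proposition \ref{prop_strict_convexity}, modified only near the two saddle-centers, where a local normal form gives the Lyapunov orbits and their spanning planes explicitly.

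\textbf{Step 1: a contact form away from the saddles.} Work in the coordinates of Proposition \ref{prop_strict_convexity}, with Hamiltonian $\hat K$. Strict convexity of $\widetilde{\mathcal M}'_{E_c}$ away from $S_1,S_2$ gives a point $z_0$ in the bounded convex region; we may take $z_0=0$ by the antipodal symmetry. The Liouville field $Y=\frac12 z\cdot\nabla$ centered at $z_0$ is then transverse to the regular part of the critical level. By openness, $Y$ remains transverse to the component of $\hat K^{-1}(h)$ for $h$ slightly below $h_c$ (i.e. $E$ slightly above $E_c$), outside fixed small neighborhoods $U_j$ of $S_j$. The form $\lambda_0=i_Y\omega_0$ restricts to a contact form whose Reeb field is parallel to $X_{\hat K}$, which gives (i) there. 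It is $\Z_2$-invariant and descends to the regularization. Its restrictions converge smoothly to a contact form on $\mathcal M_{E_c}\setminus\{S_1,S_2\}$ as $h\to h_c$, which is the source of (iv).

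\textbf{Step 2: local model at each saddle-center and the spheres $\mathcal S_j$.} Near $S_1$, apply Moser's normal form for saddle-centers, which holds since the eigenvalues are $\pm\sqrt{2\sqrt7+1}$ and $\pm i\sqrt{2\sqrt7-1}$. It gives symplectic coordinates $(q_1,p_1,q_2,p_2)$ in which $H$ is a function of $I_1=q_1p_1$ and $I_2=\frac12(q_2^2+p_2^2)$. For $E>E_c$ the Lyapunov orbit is $P_{2,1,E}=\{q_1=p_1=0,\ I_2=c(E)\}$. Define $\mathcal S_1=\{q_1+p_1=\delta\}\cap H^{-1}(E)$, respectively $q_1+p_1=-\delta$, choosing the sign on the side facing the bounded component. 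Here $\delta=\delta(E)\to0$ is chosen so that this set is a sphere. Its equator is $\{q_1=p_1\}$, which reduces to $P_{2,1,E}$ after adjusting to $I_1=0$ fluxes; more precisely, one uses the standard sphere $\{p_1=q_1\}$-type slice from \cite{dPS1}. The hemispheres are transverse to the flow with opposite orientations because $\frac{d}{dt}(p_1-q_1)$ has a sign off the orbit. Set $\mathcal S_2=-\mathcal S_1$, and let $\mathcal M_E$ be the region they bound containing the retrograde side. This gives (ii).

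\textbf{Step 3: gluing the forms and constructing $J_E$.} In $U_j$, use the model Liouville-type form $\lambda_{loc}=\frac12(q_2dp_2-p_2dq_2)+\frac12(q_1dp_1-p_1dq_1)$, corrected by an exact term as in \cite{dPS1}, so that it is contact on the level and has Reeb field parallel to $X_H$. Interpolate: $\lambda=\lambda_0+d(\rho f)$, where $\lambda_{loc}-\lambda_0=df$ on the simply connected annular region and $\rho$ is a cutoff. Contactness of the interpolation is equivalent to $\lambda(X_H)>0$. This inequality holds on the overlap because both forms evaluate positively there, the condition is convex, and the correction $d(\rho f)(X_H)$ is controlled by keeping $f$ small, achieved by choosing the overlap region where the two Liouville fields are close. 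This is the step I expect to be the main obstacle. Transversality of $Y$ fails at the saddle in the limit, so the gluing region must be chosen uniformly in $E$ to keep (iv). I would try first to choose $U_j$ of fixed size with $E\to E_c^+$ and estimate $\lambda(X_H)$ on $\partial U_j$ using strict convexity bounds from Section \ref{sec_convexity}. Finally, following \cite{dPS1}, define $J_E$ on $\xi=\ker\alpha_E$ near $\mathcal S_j$ to be the model structure for which the hemispheres, parametrized by $q_2,p_2$, lift to explicit $J_E$-holomorphic planes $\tilde u(s,t)=(a(s),u(s,t))$ asymptotic to $P_{2,j,E}$ from opposite sides. Then extend $J_E$ arbitrarily compatibly and $\Z_2$-symmetrically, which gives (iii).
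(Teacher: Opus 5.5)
Steps 1 and 2 are essentially sound, apart from two slips:
\begin{itemize}
\item Supercritical energies $E>E_c$ correspond to $h>h_c$, since $h=12/|E|^{3/2}$. They do not correspond to $h$ slightly below $h_c$.
\item For $\delta\neq 0$ the slice $\{q_1+p_1=\delta\}$ does not contain $P_{2,1,E}=\{q_1=p_1=0\}$. The boundary sphere must be the slice through the Lyapunov orbit, e.g.\ $\{q_1=p_1\}\cap H^{-1}(E)$ when $K=\alpha I_1+\omega I_2+\dots$ with $\alpha>0$, as you eventually indicate.
\end{itemize}
Replacing the paper's linear normal form, rescaling and automatic transversality of the explicit model planes by Moser's analytic normal form is a legitimate alternative route to (ii)--(iii), since the Hamiltonian is analytic near $S_j$. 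The genuine gap is Step 3. That step is the real content of the proposition, and the mechanism you propose for it does not work.

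You want the error term in $\lambda(X_H)=(1-\rho)\lambda_0(X_H)+\rho\lambda_{\mathrm{loc}}(X_H)+f\,d\rho(X_H)$ to be small, by gluing where the two Liouville fields are close. Near the saddle-center they are close nowhere. In the paper's linear coordinates $w$, $Y_0=\frac12\sum_i w_i\partial_{w_i}+B$ with the constant vector $B=d_2\partial_{w_2}-d_3\partial_{w_3}$, $d_3\approx 0.49$, because the saddle is not the center of $Y_0$. By contrast, any field adapted to the saddle vanishes there.

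The same vector $B$ is why $dQ(Y_0)=Q+\lambda_2d_2w_2+\lambda_1d_3w_3$ has no sign for $w_3$ of order $\sqrt\varepsilon$, for instance along the Lyapunov orbit. So the transition must happen where $w_3\gg\sqrt\varepsilon$. There $f=G=d_3w_1+d_2w_4-w_1w_3$ has size $|w|$, and $d\rho(X_{\widehat K})=\rho'\dot w_3$ has size $|\rho'|\,|w|$. Hence the error is of order $|\rho'|\,|w|^2$. Where $\rho\approx 0$, the only positive term available is $\frac{\lambda_2}{4}|w|^2$ from the local field. So the error is not negligible; it has to be absorbed using its sign structure.

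This is what the paper does. It writes transversality as $d\widehat K(Y_\varepsilon)>0$ and takes $\rho=\beta_\varepsilon(w_3)$.
\begin{itemize}
\item The error term is then $d\widehat K(Z_\varepsilon)=\beta_\varepsilon'\bigl(\lambda_1d_3w_1^2+\lambda_1d_2w_1w_4+O(|w|^3)\bigr)$. Its leading part is a nonnegative square.
\item Only the cross term with the small coefficient $d_2\approx 0.0047$ remains harmful. It is bounded by $c_3\beta_\varepsilon'(\varepsilon+w_3^2)$ with $c_3\approx 0.02$.
\item Meanwhile $Y_0$ contributes $\beta_\varepsilon c_2w_3$ with $c_2=0.8$.
\item One chooses $\beta_\varepsilon$ solving $\beta_\varepsilon'\le \frac{(\lambda_2/4)w_3^2+c_2\beta_\varepsilon w_3}{c_3(\varepsilon+w_3^2)}$.
\item The numerical fact $c_2>c_3$ is exactly what lets $\beta_\varepsilon$ reach $1$ at $w_3=C'_\varepsilon\sqrt\varepsilon=O(\varepsilon^{1/2-c_3/(2c_2)})\to 0$. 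This is also what gives (iv).
\end{itemize}
Your Step 3 lacks an estimate of this kind. It must exploit that the harmful part of $Y_0-Y_{\mathrm{loc}}$ lies in the center directions, with a coefficient small compared with the transversality constant of $Y_0$. Neither strict convexity of the critical level nor convexity of the condition $\lambda(X_H)>0$ supplies it.
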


In order to construct the $2-3-2$ foliations on $\mathcal M_E$ for $E-E_c>0$ sufficiently small, we need to show the existence of a retrograde orbit that will be used as a binding orbit. The following proposition shows that one can find a continuous family of such orbits for energies slightly above $E_c$.

\begin{prop}\label{prop_retrograde}
Let $\mathcal M_E$ and $\alpha_E$ be as in Proposition \ref{prop_contactform_J}.
There exists a continuous family of retrograde orbits $P_{3, E} \subset \mathcal{M}_{E}\setminus \partial \mathcal M_E$, defined on an open interval of energies $E$ containing $E_c$ so that the projection of $P_{3, E}$ to the $q$-plane is a simple closed curve circling the origin in the counterclockwise direction, and symmetric with respect to both axes. Moreover, $P_{3,E} \to P_{3,E_0}$ in $C^\infty$ as $E \to E_0$. In particular, the action $\mathcal A(P_{3,E}) = \int_{P_{3,E}}\alpha_E$ of $P_{3,E}$ is uniformly bounded near $E_c$, and there exists a continuous family of $2$-disks $\mathcal{D}=\mathcal{D}_{E}\subset \mathcal{M}_E$ for $P_{3,E}$ in $\mathcal M_E$ whose $|d\alpha_E|$-area $\mathcal S(\mathcal{D}, \alpha_E):=\int_{\mathcal D}|d\alpha_E|$ is uniformly bounded near $E_c$.
\end{prop}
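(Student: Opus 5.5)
The plan is to obtain the retrograde family by continuation through the critical level rather than by Birkhoff's shooting argument, which only works for $E<E_c$. I would fix the retrograde orbit $P_{3,E_c}$ at the critical energy and apply the implicit function theorem to the return map restricted to an energy level. Existence at $E=E_c$ comes from Birkhoff's shooting method. This is available because the retrograde orbit at energies $E<E_c$ stays a definite distance from the saddles $S_1,S_2$: its projection lies inside $|q_1|<1$ and touches neither $(\pm 1,0)$ nor the zero-velocity curve. Hence a limit of $P_{3,E}$ as $E\to E_c^-$, normalized by the symmetry, is a genuine periodic orbit in $\mathcal M_{E_c}\setminus\{S_1,S_2\}$.

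I would exploit the reversing symmetries $(q_1,q_2,p_1,p_2)\mapsto(q_1,-q_2,-p_1,p_2)$ and $(q_1,q_2,p_1,p_2)\mapsto(-q_1,q_2,p_1,-p_2)$. A symmetric retrograde orbit is then determined by a one-parameter shooting problem:
\begin{itemize}
\item start perpendicularly on the positive $q_1$-axis at a point $(a,0)$ with $p_1=0$ and $\dot q_2>0$, the speed being fixed by the energy;
\item require that the trajectory first hit the $q_2$-axis perpendicularly, i.e.\ $\dot q_2=0$ there.
\end{itemize}
This gives a scalar equation $F(a,E)=0$, smooth near $(a_c,E_c)$ because the orbit stays away from the singularities. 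Reflecting the quarter-arc then produces a closed orbit symmetric with respect to both axes.

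To continue the solution I need $\partial_a F(a_c,E_c)\neq 0$. This is equivalent to the symmetric orbit being nondegenerate within the symmetric class, that is, to the return map on the energy level not having eigenvalue $1$ along the symmetric direction. I expect this to be the main obstacle. I would derive it from Corollary \ref{cor_rp3} together with the convex picture of Proposition \ref{prop_strict_convexity}. On the strictly convex lift, $P_{3,E_c}^2$ is a simple closed orbit bounding the disk found by the shooting, so its lift has Conley--Zehnder index $\geq 3$. I would also try to show that the index is exactly $3$ with rotation number in $(1,2)$. This could be obtained from the monotone twist behaviour of the shooting function $F(\cdot,E)$, namely monotonicity of the hitting angle in $a$, which I would establish by a comparison argument for the linearized equation along the orbit. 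If this monotonicity is hard to prove, the fallback is a degree argument: $F(\cdot,E)$ changes sign on a fixed interval $[a_-,a_+]$, uniformly for $E$ near $E_c$. That gives a zero $a(E)$ for every nearby $E$, and a continuous selection, or the use of a minimal zero, yields the continuous family.

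The remaining assertions follow from continuity. Smooth dependence of solutions of the ODE on the initial data and on $E$ gives $P_{3,E}\to P_{3,E_0}$ in $C^\infty$. The contact forms $\alpha_E$ converge on compact subsets away from $S_1,S_2$ by Proposition \ref{prop_contactform_J}(iv), and $P_{3,E}$ stays in such a compact set, so the action is bounded. For the $2$-disks I would take the explicit family $\mathcal D_E$ consisting of all points lying over the region bounded by the projected orbit, with velocity pointing in a fixed tangential direction, and then adjust it near the boundary. This is a $2$-disk whose boundary double covers $P_{3,E}$, again contained in a fixed compact set away from the singularities. The smooth convergence of $\alpha_E$ then bounds $\int_{\mathcal D_E}|d\alpha_E|$ uniformly.
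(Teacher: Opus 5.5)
\textbf{There is a genuine gap.} It sits where you say existence at $E_c$ ``is available''.

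You claim that the retrograde orbits for $E<E_c$ stay a definite distance from $S_1,S_2$. This does not follow from their projections lying in $|q_1|<1$ and avoiding $(\pm1,0)$ and the zero-velocity curve. That is true for each fixed $E<E_c$, but the Hill region pinches at $(\pm1,0)$ as $E\to E_c^-$. Nothing in your argument prevents the orbits from accumulating on the saddles (with periods blowing up) or degenerating to a collision orbit. This uniform control is the real content of the proposition. The paper proves it in Lemma \ref{lem: hat q2(0)>-1+delta}:
\begin{itemize}
\item In the linear saddle-center coordinates at $S_1$, rescaled by $\bar x=\delta_0 x$, take the trajectory with $q_1(0)=-1+\delta_0$, $q_2(0)=\dot q_1(0)=0$, $\dot q_2(0)<0$. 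It converges to an explicit solution of the linearized flow whose $q_2$-component returns to $0$ at some $t_0<\pi/\lambda_2'$, with $q_1(0)<q_1(t_0)<0$.
\item This is combined with the monotonicity $\dot q_1>-2q_2>0$, which comes from $\frac{d}{dt}(\dot q_1+2q_2)=-\partial_{q_1}U>0$.
\item The conclusion holds for all $E$ near $E_c$, on both sides of $E_c$.
\end{itemize}
Hence the collision endpoint satisfies $q_{1,*}(E)>-1+\delta_0$ uniformly. Birkhoff's shooting therefore works directly at and slightly above $E_c$, contrary to your premise that it only works for $E<E_c$. In the paper's normalization the shooting angle still runs from $-\pi/4$ to some $\theta_*>\pi/4$. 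Every orbit produced stays at distance at least $\delta_0$ from the saddles. This compactness is what underlies the action bound and the $|d\alpha_E|$-area bound, so your continuity arguments for those also rest on the missing estimate.

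Your replacement is continuation by the implicit function theorem. It needs $\partial_aF(a_c,E_c)\neq 0$, i.e.\ nondegeneracy of $P_{3,E_c}$ within the symmetric class, and nothing available gives it. Corollary \ref{cor_rp3} and Proposition \ref{prop_strict_convexity} only yield $\mu_{\rm CZ}\ge 3$. Convexity does not imply nondegeneracy: the round $S^3$ is strictly convex, hence dynamically convex, yet all its periodic orbits are degenerate. The ``monotone twist'' comparison argument that would give rotation number in $(1,2)$ is only announced, not supplied.

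The paper never uses nondegeneracy; existence for each $E$ comes from the global sign change of the shooting angle. Your fallback assumes exactly the missing fact. A sign change of $F(\cdot,E)$ on a fixed interval, uniformly in $E$, requires either this global shooting picture together with the uniform saddle estimate, or a sign-changing (odd-order) zero of $F(\cdot,E_c)$, which you have not established. Also, the minimal zero of $F(\cdot,E)$ need not depend continuously on $E$, so that selection does not by itself produce a continuous family.
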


In terms of contact topology, a retrograde orbit $P_{3,E}\subset \mathcal M_E$, as given in Proposition \ref{prop_retrograde} is not only $2$-unknotted but its self-linking number is $-1/2$ as a transverse knot on the contact manifold $(\mathcal M_E, \xi_E= \ker \alpha_E)$. This means that there exists a $2$-disk $u:\D \to \mathcal M_E\setminus \partial \mathcal M_E$ for $P_{3,E}$ so that the characteristic foliation $\mathcal C=(\ker (u^*\alpha_E)\cap  T\D)^\perp$ contains a unique singularity $e\in \D \setminus \partial \D$, which is nicely elliptic, and from which the regular leaves of $\mathcal C$ issue and intersect $\partial \D$ transversely. See \cite{HLS2014} for more details.

We also need some control on the indices of periodic orbits in the complement of the binding orbits, given by $P_{2,1, E}, P_{2,2, E}$, and $P_{3, E}$. We show that $\mathcal M_E$ contains no low-index orbit other than $P_{2,1,E}$ and $P_{2,2,E}$.
 This is a consequence of the strict convexity of the critical subset $\mathcal M_{E_c}$ proved in Proposition \ref{prop_strict_convexity}.

\begin{prop}\label{prop_weakly_convex}
Let $\mathcal M_E$ and $\alpha_E$ be as in Proposition \ref{prop_contactform_J}. Then for $E-E_c>0$ sufficiently small, the Conley-Zehnder index of every contractible periodic orbit in $\mathcal M_E \setminus \partial \mathcal M_E$ is at least $3$. In particular, $\mathcal M_E$ is weakly convex, i.e., the Conley-Zehnder index of its contractible periodic orbits is at least $2$, the only contractible periodic orbits with Conley-Zehnder index $2$ are the Lyapunov orbits $P_{2,1, E}, P_{2,2, E}\subset \partial \mathcal M_E$.
\end{prop}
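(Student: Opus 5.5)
The plan is a compactness and contradiction argument in the style of \cite{dPS1, dPS2, LS25}, built on the strict convexity of $\widetilde{\mathcal M}'_{E_c}$ from Proposition \ref{prop_strict_convexity} together with the local analysis near the saddle-centers $S_1,S_2$.

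Assume the statement fails. Then there are $E_n\to E_c^+$ and contractible periodic orbits $P_n\subset \mathcal M_{E_n}\setminus\partial\mathcal M_{E_n}$ with $\mu_{CZ}(P_n)\le 2$. The goal is to rule out every possible limiting behaviour of the $P_n$. Work on the double cover, in the coordinates given by $\Phi$. Because the hypersurfaces vary smoothly away from $S_1,S_2$, the contact forms $\alpha_{E_n}$ converge to $\alpha_{E_c}$ in $C^\infty_{loc}$ by Proposition \ref{prop_contactform_J}-(iv).

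\textbf{Step 1: orbits staying away from the singularities.} Suppose the $P_n$ avoid a fixed neighborhood $U$ of $S_1\cup S_2$ and have bounded period. Then they converge, up to a subsequence, to a periodic orbit of $\alpha_{E_c}$ in $\mathcal M_{E_c}\setminus\{S_1,S_2\}$. Strict convexity on the regular part gives positive curvature there. Via the standard estimate of \cite{HWZ98} (a transverse rotation bound from the second fundamental form, cf.\ \cite{Long02}), this yields $\mu_{CZ}\ge 3$ for any orbit of bounded period on a uniformly positively curved region. Lower semicontinuity of the index along convergent sequences then gives the contradiction. If instead the periods are unbounded while the orbits stay away from $U$, the same rotation estimate applies: the transverse linearized flow winds at a uniform positive rate, and the index grows with the period. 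This uses the uniform positive curvature bound on compact subsets of $\mathcal M_{E_c}\setminus\{S_1,S_2\}$, which persists for nearby $E$.

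\textbf{Step 2: orbits entering $U$.} This is the main obstacle. In $U$ I would put the Hamiltonian in Moser's normal form for a saddle-center, $H= -\lambda q_1p_1 + \tfrac{\omega}{2}(q_2^2+p_2^2)+\text{h.o.t.}$, and split into two cases.

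\begin{enumerate}
\item \emph{The orbit is one of the Lyapunov orbits $P_{2,j,E}$.} It is excluded, since it lies in $\partial\mathcal M_E$.
\item \emph{The orbit enters $U$ but stays inside $\mathcal M_E$.} Since $\mathcal S_j$ is transverse to the flow away from $P_{2,j,E}$, such an orbit cannot cross into the neck. It therefore enters and exits $U$ on the same side, spending a bounded time there and turning in the center direction by an amount close to, but not below, that of a half-turn around the hyperbolic region.
\end{enumerate}

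I would estimate the rotation of the linearized flow, in a global trivialization of $\xi$ extending over a spanning disk, separately on segments inside and outside $U$. Outside $U$, Step 1's positive winding applies. Inside $U$, the winding is bounded below by a quantity tending to that of the flow near $S_j$ on the critical level, which is nonnegative and can be computed explicitly. As a first attempt I would control it by comparison with the linearization at $S_j$ (eigenvalues $\pm\sqrt{2\sqrt7+1}$, $\pm i\sqrt{2\sqrt7-1}$), as in \cite{dPS1}. Summing the two contributions should show the total winding exceeds $1$, hence $\mu_{CZ}\ge 3$. An orbit spending long times in $U$ accumulates on the stable/unstable manifolds of $P_{2,j,E}$ near the Lyapunov orbits but inside $\mathcal M_E$, and I would treat it the same way.

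\textbf{Step 3: the weak convexity conclusion.} The Lyapunov orbits are hyperbolic with rotation in $(0,1)$ relative to the trivialization induced by the hemispheres. They therefore have index exactly $2$, and by Steps 1 and 2 they are the only orbits of index $\le 2$. This gives weak convexity.
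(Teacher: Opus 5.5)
Your overall architecture is the same as the paper's: a local index bound near the saddle-centers, combined away from them with three ingredients. These are the action--index growth $\mu_{\rm CZ}\ge A+BT$ coming from strict convexity of the regular part of $\mathcal M_{E_c}$, compactness, and lower semicontinuity of $\mu_{\rm CZ}$. Your Step 1 is essentially the paper's argument.

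\textbf{The gap is Step 2.} This is where all the difficulty sits, and you do not actually prove it. The paper does not derive it from the linearization at $S_j$ either. It imports Lemma~\ref{lem: RS index estimate} from \cite[Theorem~1.9]{LS25}, a Robbin--Salamon index computation. That lemma gives a fixed neighborhood $\mathcal U_3$ of the singularities such that every contractible periodic orbit meeting $\mathcal U_3$, other than $P_{2,1,E},P_{2,2,E}$, has $\mu_{\rm CZ}\ge 3$, for all $E$ near $E_c$. Your sketch does not replace this, for three reasons.
\begin{itemize}
\item The assertion that an orbit entering $U$ spends a bounded time there is false. A periodic orbit passing close to the stable or unstable manifold of $P_{2,j,E}$ stays near $P_{2,j,E}$ for an arbitrarily long time. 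Since $P_{2,j,E}$ shrinks to $S_j$ as $E\to E_c^+$, even a single passage can take time of order $|\log {\rm dist}(P,S_j)|$. Saying you would ``treat it the same way'' is not an argument.
\item The bookkeeping cannot close. $\mu_{\rm CZ}\ge 3$ means every transverse vector winds strictly more than one full turn per period. ``Winding $\ge 0$ inside $U$'' plus ``positive winding outside $U$'' only yields winding $>0$. Nothing forces the excursion outside $U$ to contribute more than one turn.
\item Nonnegativity inside $U$ is not automatic. The hyperbolic part of the linearized flow rotates some vectors negatively. The transverse plane itself also turns as the orbit is deflected back in the neck.
\end{itemize}
What is needed is a quantitative estimate, uniform in $E$, that the positive rotation acquired near $S_j$ dominates the bounded negative contributions at entry and exit. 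That positive rotation comes from the center frequency $\lambda_2$ over the long passage time, or roughly one turn per circuit near $P_{2,j,E}$. Such an estimate is precisely what Lemma~\ref{lem: RS index estimate} supplies; your phrase about turning by ``close to, but not below, a half-turn'' neither defines nor proves it.

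\textbf{Step 3 also contains an error.} Hyperbolic orbits have rotation number in $\frac12\Z$. So ``rotation in $(0,1)$'' would force $\rho=1/2$ and $\mu_{\rm CZ}=1$, contradicting your own conclusion. The Lyapunov orbits are positive hyperbolic with rotation number exactly $1$ in the trivialization induced by a hemisphere of $\mathcal S_j$, which is why their index is $2$. With Lemma~\ref{lem: RS index estimate} in place of your Step 2, your Step 1, applied with $U=\mathcal U_3$, completes the proof exactly as in the paper.
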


To obtain a $2-3-2$ foliation for $\mathcal M_E$, we apply the following theorem from \cite{LS25}, which is a general criterion for the existence of finite energy foliations with prescribed binding orbits.

\begin{thm}[{Liu-Salom\~ao \cite[Theorem 1.3]{LS25}}]\label{thm: 2-3-2 foliation special}\label{thm_LS25}
Assume that $(\mathcal M,\alpha)$ is contactomorphic to the holed real projective space $\R P^3\setminus (\mathcal  B_1\cup\mathcal B_2)$ equipped with the universally tight contact structure. Here, $\mathcal B_1,\mathcal B_2\subset \R P^3$ are two disjoint and regular open $3$-balls. Denote by $\mathcal S_1,\mathcal S_2\simeq S^2$ the components of $\partial \mathcal M$. Let $J$ be a $d\alpha$-compatible almost complex structure in the symplectization $\R\times \mathcal M$ so that the following conditions hold:
\begin{itemize}
\item[(H1)] Each $\mathcal S_j\simeq S^2$ in $\partial \mathcal M$ admits an index-$2$ hyperbolic  periodic orbit $P_{2,j}\subset \mathcal S_j$ and the hemispheres of $\mathcal S_j\setminus P_{2,j}$ are the projections to $\mathcal M$ of embedded $J$-holomorphic planes asymptotic to $P_{2,j}$ through opposite directions. The Reeb vector field $R$ of $\alpha$ points in opposite directions along the hemispheres of $\mathcal S_j\setminus P_{2,j}$.

\item[(H2)] There exists a $2$-unknotted Reeb orbit $P_{3}\subset \mathcal M\setminus \partial \mathcal M$ with self-linking number $-1/2$ and rotation number $\rho(P_{3}^2)>1$.

\item[(H3)] The set of contractible periodic orbits $\mathcal P' \subset \mathcal M\setminus\{P_{3}\cup P_{2,1}\cup P_{2,2}\}$ satisfying
\begin{equation}\label{equ: linking conditions}
\rho(P')=1,\quad \mathrm{link}(P',P_{3}^2)=0\quad \text{and}\quad \mathcal A(P')\leq \mathcal S(\mathcal D,\alpha),
\end{equation} is empty, where $\mathcal S(\mathcal D, \alpha)=\int_{\mathcal D} |d\alpha|$ is the $|d\alpha|$-area of a $2$-disk $\mathcal D$ for $P_{3}$.
\end{itemize}
Then $\mathcal M$ admits a $2-3-2$ foliation $\mathcal F$ adapted to $\alpha$, $\{P_{3},P_{2,1},P_{2,2}\}$ and $J'$, where $J'$ is $C^\infty$-close to $J$ and coincides with $J$ near $\partial \mathcal M$.
\end{thm}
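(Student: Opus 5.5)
The plan is to build the $2-3-2$ foliation by the Hofer-Wysocki-Zehnder strategy: first produce one embedded finite energy plane asymptotic to $P_3^2$, then follow its one-parameter family of deformations, and finally show that the family can only degenerate at the configurations prescribed in Definition \ref{def: 2-3-2 foliation}. Conditions (H2) and (H3) provide the index and action control needed in the compactness arguments. Condition (H1) supplies the walls $\mathcal S_1,\mathcal S_2$ that the curves cannot cross.

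\textbf{Step 1 (a plane asymptotic to $P_3^2$).} Since $P_3$ is $2$-unknotted with self-linking number $-1/2$, as in \cite{HS2} we choose a $2$-disk $\mathcal D$ for $P_3$ whose characteristic foliation has a unique, nicely elliptic singularity $e$. We perturb $J$ near $\{0\}\times\mathcal D$ so that a Bishop family of $J$-holomorphic disks with boundary on $\{0\}\times(\mathcal D\setminus P_3)$ emanates from $e$. The energy of these disks is bounded by $\mathcal S(\mathcal D,\alpha)$.
\begin{itemize}
\item The hemispheres in (H1) are projections of $J$-holomorphic planes, and $R$ points in opposite directions along them. By positivity and stability of intersections, the Bishop disks cannot cross $\R\times\mathcal S_j$. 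Hence the family stays in $\R\times(\mathcal M\setminus\partial\mathcal M)$.
\item Running the family until it ceases to exist yields, after rescaling, a finite energy plane $\tilde u$. Its action is at most $\mathcal S(\mathcal D,\alpha)$, and its asymptotic limit links $P_3^2$ trivially.
\item I expect to show, as in \cite{HS2}, that the limit is $P_3^2$. Otherwise the plane is asymptotic to some other contractible orbit $P'$. Linking and winding arguments for the asymptotic eigenvector force $\rho(P')=1$ and ${\rm link}(P',P_3^2)=0$, and $\mathcal A(P')\le\mathcal S(\mathcal D,\alpha)$, so $P'$ is excluded by (H3). The case $P'=P_{2,j}$ is ruled out because such a plane would intersect the rigid hemisphere planes.
\end{itemize}
Since $\rho(P_3^2)>1$, we have $\mu_{CZ}(P_3^2)\ge3$. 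The plane therefore has Fredholm index $2$ and winding number $\mathrm{wind}_\infty=1$. By Siefring's intersection theory its projection is an embedding transverse to $R$.

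\textbf{Step 2 (the families and their breaking).} Using the implicit function theorem for fast planes, $\tilde u$ lies in a one-parameter family, modulo $\R$-translation, whose projections foliate an open set. We consider a maximal such family. For a sequence at its end we apply SFT compactness; the action bound is $\mathcal A(P_3^2)$. The limit is a holomorphic building with top asymptotic $P_3^2$.
\begin{itemize}
\item Every intermediate asymptotic orbit $P'$ is contractible, has action at most $\mathcal A(P_3^2)$, and does not link $P_3^2$, because the projected planes avoid $P_3$.
\item Index additivity together with $\mathrm{wind}_\infty$ and $\mathrm{wind}_\pi$ estimates forces $\rho(P')=1$.
\item We must ensure $\mathcal A(P_3^2)\le\mathcal S(\mathcal D,\alpha)$, or else rerun the argument with the disk bound. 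Then (H3) leaves only $P'\in\{P_{2,1},P_{2,2}\}$.
\end{itemize}
The building is thus a cylinder $\tilde v_j$ from $P_3^2$ to $P_{2,j}$ on top of one of the hemisphere planes. Since $P_{2,j}$ is hyperbolic of index $2$, $\tilde v_j$ has Fredholm index $1$ and is rigid. Uniqueness of the hemisphere planes and of the cylinders, via intersection theory and genericity, shows that both ends of every family break at the same rigid configurations. Starting from the other side of $\tilde v_j$ and gluing, we obtain the second family.

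\textbf{Step 3 (assembling the foliation).} Siefring's intersection numbers vanish for all pairs of leaves, because the relevant winding numbers are extremal. Hence distinct leaves project to disjoint embedded surfaces transverse to $R$. A connectedness argument then shows that the two plane families, the two rigid cylinders and the four hemispheres fill $\mathcal M\setminus(P_3\cup P_{2,1}\cup P_{2,2})$. Transversality for the rigid cylinders needs a generic perturbation $J'$ of $J$ supported away from $\partial\mathcal M$, which keeps (H1) intact.

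I expect the main obstacle to be the compactness analysis in Step 2. One must exclude every degenerate breaking, including multiple-level buildings, orbits with $\rho=1$ that are badly placed, and multiply covered components. The action and linking hypotheses in (H3) must be matched precisely against the index and winding computations there. I would first try to carry out this analysis in the double cover, where the problem becomes $S^3$ minus four balls, and then descend using the deck symmetry.
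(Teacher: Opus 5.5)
\textbf{Gap in Step~1: the Bishop family can break at $P_{2,j}$.} The paper does not prove this statement. It is quoted from \cite{LS25} and only applied; the paper merely restates it as Theorem~\ref{thm_weakly_convex} and describes the breaking of plane families. So your outline has to stand on its own. Its architecture is the expected Hofer--Wysocki--Zehnder/Hryniewicz--Salom\~ao scheme: Bishop family, fast plane asymptotic to $P_3^2$, one-parameter families, breaking, intersection theory. The step that fails is your dismissal of bubbling at $P_{2,j}$ on the grounds that such a plane would intersect the hemisphere planes.
\begin{itemize}
\item[(a)] $P_{2,j}$ is exactly the orbit that (H3) does not control. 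It has $\rho(P_{2,j})=1$. It bounds a hemisphere of $\mathcal S_j$ disjoint from $P_3$, so $\mathrm{link}(P_{2,j},P_3^2)=0$. Nothing forces $\mathcal A(P_{2,j})>\mathcal S(\mathcal D,\alpha)$; in the Hill application the Lyapunov actions tend to $0$.
\item[(b)] Let $\tilde u_\pm$ be the hemisphere planes of (H1). A bubble asymptotic to $P_{2,j}$ can simply be an $\R$-translate of $\tilde u_+$ or $\tilde u_-$. Such a curve lies in the wall $\R\times\mathcal S_j$ and is disjoint from all other wall curves, so positivity of intersections gives no contradiction.
\item[(c)] In fact this is the only possibility. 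A plane asymptotic to the index-$2$ orbit $P_{2,j}$ must approach along the simple negative eigenvalue of winding $1$, i.e.\ tangent to $\mathcal S_j$. The Siefring relative-asymptotics argument you use in Step~2 to identify the bottom level of the building then shows it coincides with a translate of $\tilde u_\pm$.
\end{itemize}
Hence the Bishop family can genuinely break into a hemisphere plane plus a disk with boundary on $\{0\}\times\mathcal D$ and a negative end at $P_{2,j}$. This is the exact analogue of the cylinder-plus-hemisphere breaking you expect later. At that moment your Step~1 has produced no plane asymptotic to $P_3^2$.

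\textbf{What is missing.} You need a way through this breaking, or a different construction of the first fast plane (or of the rigid cylinders). One route is the following.
\begin{itemize}
\item[(1)] Show the limiting disk with a negative end at $P_{2,j}$ is rigid.
\item[(2)] Glue it to the opposite hemisphere plane to continue the Bishop family on the other side. The glued disks stay in $\R\times\mathcal M$ by your wall argument.
\item[(3)] Show only finitely many such breakings occur.
\item[(4)] Only then use (H3) to exclude all other bubbles and obtain convergence to a plane asymptotic to $P_3^2$.
\end{itemize}

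\textbf{Smaller corrections.}
\begin{itemize}
\item[(i)] $\rho(P_3^2)>1$ only gives $\mu_{CZ}(P_3^2)\ge 3$. So the unweighted index of a plane asymptotic to $P_3^2$ is $\mu_{CZ}(P_3^2)-1\ge 2$, and that of $\tilde v_j$ is $\mu_{CZ}(P_3^2)-2\ge 1$. Fastness ($\mathrm{wind}_\infty=1$) must be inherited from the Bishop disks, since $\mathrm{wind}_\pi=0$ passes to the limit. The one-parameter family and the rigidity of $\tilde v_j$ then come from the weighted fast-plane Fredholm theory, not from the unweighted index.
\item[(ii)] $\alpha$ is not assumed nondegenerate, which is why (H3) is phrased via rotation numbers. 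SFT compactness and generic transversality must therefore be supplemented by degenerate bubbling-off analysis or an approximation argument.
\item[(iii)] Your worry about $\mathcal A(P_3^2)\le\mathcal S(\mathcal D,\alpha)$ in Step~2 is unnecessary. By Stokes, $\mathcal A(P_3^2)=\int_{\mathcal D}d\alpha\le\int_{\mathcal D}|d\alpha|$.
\end{itemize}
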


Hypotheses (H1), (H2) and (H3) in Theorem \ref{thm_LS25} follow from Propositions \ref{prop_contactform_J}, \ref{prop_retrograde} and \ref{prop_weakly_convex}, respectively. Indeed, Proposition \ref{prop_weakly_convex} implies that the Conley-Zehnder index of every contractible periodic orbit in $\mathcal M_E\setminus (P_{3, E} \cup P_{2,1, E} \cup P_{2,2, E})$ is $\geq 3$, which is equivalent to rotation number greater than $1$, thus implying that the set of contractible periodic orbits satisfying \eqref{equ: linking conditions} is empty. Hence, applying Theorem \ref{thm_LS25} to $\mathcal M_E$, $\alpha_E,$ $J_E$ and the binding orbits $P_{2,1,E}, P_{2,2,E}$ and $P_{3,E}$, we conclude that  $\mathcal M_E$ admits a $2-3-2$ foliation with binding orbits $P_{2,1,E}, P_{2,2,E}$ and $P_{3,E}$ for every $E-E_c>0$ sufficiently small.

The existence of infinitely many periodic orbits, homoclinic/heteroclinic orbits to the Lyapunov orbits, and positivity of topological entropy in case these stable-unstable manifolds do not coincide, follows from standard arguments found in \cite{Conley1968, HWZ03}, see also \cite{dPS2, dPHKS, LS25} for more details. This completes the proof of Theorem \ref{thm_main2}.

Sections \ref{sec_convexity}--\ref{sec_weakly_convex} are devoted to the proof of Propositions \ref{prop_strict_convexity}, \ref{prop_contactform_J},
\ref{prop_retrograde} and \ref{prop_weakly_convex}, respectively.

\section{Hill's lunar problem}\label{sec: contact form and retrograde orbit}

The circular planar restricted three-body problem describes the motion of a massless particle under the gravitational influence of two primaries moving on circular orbits around their center of mass. In rotating coordinates, its Hamiltonian is
$$
H_\mu(p,q)=\frac{(p_1-q_2)^2}{2}+\frac{(p_2+q_1)^2}{2}
-\frac{\mu}{|q-(1-\mu)|}-\frac{1-\mu}{|q+\mu|}-\frac{1}{2}|q|^2,
$$
where the primaries are located at $-\mu$ and $1-\mu$ in $\C\simeq\R^2$, with masses $1-\mu$ and $\mu$, respectively.

Since our interest is the dynamics near the smaller primary, it is convenient to shift the origin to the moon. After the change of coordinates
\[
(p,q)\mapsto (p-i(1-\mu),q+1-\mu),
\]
and the addition of the constant $(1-\mu)^2/2$, the Hamiltonian becomes
\begin{equation}\label{equ: hamiltonian RK}
\tilde H_\mu(p,q)
=
\frac12\big((p_1-q_2)^2+(p_2+q_1)^2\big)
-\frac{\mu}{|q|}
-\frac{|q|^2}{2}
-(1-\mu)\left(\frac1{|q+1|}+q_1\right).
\end{equation}
Hill introduced a further rescaling which magnifies a neighborhood of the moon as $\mu\to0$. Define
\[
\phi_\mu(p,q)
=
\big((\mu/3)^{1/3}p,(\mu/3)^{1/3}q\big)
\]
and
\[
\hat H_\mu
=
(\mu/3)^{-2/3}
\big(\tilde H_\mu\circ \phi_\mu+1-\mu\big).
\]
A direct computation gives
\[
\hat H_\mu(p,q)=\frac12\big((p_1-q_2)^2+ (p_2+q_1)^2\big)-\frac3{|q|}
-\frac{|q|^2}{2}-\frac{1-\mu}{(\mu/3)^{2/3}}\left(\frac{3^{1/3}}{|\mu^{1/3}q+3^{1/3}|}+\frac{\mu^{1/3}q_1}{3^{1/3}}-1\right).
\]
Expanding the expression above in $\mu>0$ small, we have
$$
\frac{3^{1/3}}{|\mu^{1/3}q+3^{1/3}|}
+\frac{\mu^{1/3}q_1}{3^{1/3}}-1 =\left(q_1^2-\frac{1}{2}q_2^2\right)(\mu/3)^{2/3}+O(\mu).
$$
Hence, passing to the limit $\mu\to0$, we obtain Hill's lunar Hamiltonian
\begin{equation}\label{equ: Hamiltonian Lunar}
\begin{aligned}
H(p,q)=\frac12\big((p_1-q_2)^2+(p_2+q_1)^2\big)-\frac3{|q|}
-\frac32 q_1^2 .
\end{aligned}
\end{equation}

Historically, Hill introduced this model in his study of lunar motion, leading to remarkably accurate predictions of the moon's periodic motion. The resulting system has since become one of the fundamental models in celestial mechanics.

The effective potential of \eqref{equ: Hamiltonian Lunar} is
\[
U(q):
=
-\frac3{|q|}
-\frac32 q_1^2 .
\]
Hamilton's equations are
\begin{equation}\label{equ: motion of H^0}
\left\{
\begin{aligned}
\dot q_1 &= p_1-q_2,\quad \dot p_1 = -\frac{3q_1}{|q|^3}+3q_1-(p_2+q_1),\\
\dot q_2 &= p_2+q_1,\quad \dot p_2 = -\frac{3q_2}{|q|^3}+p_1-q_2,
\end{aligned}
\right.
\end{equation}
or equivalently
\begin{equation}\label{equ: 2nd ODE}
\left\{
\begin{aligned}
\ddot q_1
&=
-2\dot q_2
-\frac{3q_1}{|q|^3}
+3q_1,\\
\ddot q_2
&=
2\dot q_1
-\frac{3q_2}{|q|^3}.
\end{aligned}
\right.
\end{equation}

The potential $U$ has exactly two critical points,
$
\bar S_1=(-1,0)$ and $
\bar S_2=(1,0),$
both lying on the critical level
$
U^{-1}(E_c),
E_c=-9/2.
$
These correspond to the equilibrium points
$
S_1=(0,1,-1,0),
$ and
$S_2=(0,-1,1,0)
$
of the Hamiltonian vector field. Moreover, the critical curve $U^{-1}(E_c)$ is contained in the strip
$
|q_1|<\sqrt3 ,
$
see Figure~\ref{fig_Hill}.

Now consider the Levi-Civita coordinates $q=v^2,p=u/(2\bar v)$, satisfying $dp\wedge dq=du\wedge dv$.
Denote the regularized Hamiltonian by $\hat K_0(u,v):=4|v|^2(H(u/(2\bar v),v^2)+c)$.
It depends on a parameter $c>0$ that can be eliminated under the rescaling $(u,v)=(c^{3/4}y,c^{1/4}x)$. We thus obtain
\begin{equation}\label{equ: Ham K}
\begin{aligned}
K(y,x):&=\frac{1}{c^{3/2}}(\hat K_0(c^{3/4}y,c^{1/4}x)+12)\\
&=\frac{(y_1-2|x|^2x_2)^2}{2}+\frac{(y_2+2|x|^2x_1)^2}{2}+V(x),
\end{aligned}
\end{equation}
where the magnetic field is $F=(-2|x|^2x_2,2|x|^2x_1)$ and the potential function is
$$
V(x)=-6|x|^6+24|x|^2x_1^2x_2^2+4|x|^2=2|x|^2(2-3(x_1^2-x_2^2)^2).
$$
Let $h=h(E)=12/|E|^{3/2}$. For every $E<0$, the energy level $K^{-1}(h)$ double covers
the regularization of the energy surface $H^{-1}(E)$. In particular, the critical level $H^{-1}(E_c)$ is associated with the critical level $K^{-1}(h_c)$ with critical value $h_c=8 \sqrt{2}/9$.

\section{Basics on finite energy foliations}\label{sec: Reeb dynamics and pseudo-holomorphic curves}

We briefly recall the notions from contact geometry and pseudo-holomorphic curve theory that will be used later.

Consider the standard contact three-sphere $(S^3,\xi_0)$, where
$
\xi_0=\ker \alpha_0,
$
and $\alpha_0$ is the restriction to $S^3\subset \mathbb R^4$ of the Liouville form
\[
\alpha_0=\frac12\sum_{j=1}^2(y_jdx_j-x_jdy_j).
\]
Here,
$
S^3=\left\{(x_1,y_1,x_2,y_2)\in \mathbb R^4 \mid x_1^2+y_1^2+x_2^2+y_2^2=1\right\}.
$

If $f:S^3\to \mathbb R^+$ is smooth and positive, then
$
\alpha=f\alpha_0
$
is again a contact form defining the same contact structure $\xi_0$. Its Reeb vector field $R$ is characterized by
$\iota_R d\alpha =0$ and $\alpha(R)=1.$
The associated flow will be denoted by
$\psi_t:S^3\to S^3.$ Since the Reeb flow preserves $\alpha$, it also preserves the contact distribution $\xi_0$.

A periodic Reeb orbit is a pair $P=(x,T)$, where $T>0$ and
$x:\mathbb R\to S^3$ satisfies
$\dot x(t)=R(x(t))$, $x(t+T)=x(t).$
The orbit is called nondegenerate when the linearized return map
$d\psi_T(x(0)):\xi_0|_{x(0)}\to \xi_0|_{x(0)}$ has no eigenvalue equal to $1$.

The standard contact form $\alpha_0$ is invariant under the $\mathbb Z_p$-action generated by
\[
(z_1,z_2)\mapsto
\left(
e^{2\pi i/p}z_1,
e^{2\pi iq/p}z_2
\right),
\]
where $\mathbb C^2$ is identified with $\mathbb R^4$ through
$
z_j=x_j+iy_j.
$
Therefore, $\alpha_0$ descends to the lens space
\[
L(p,q)=S^3/\mathbb Z_p.
\]
We denote by
$
\pi_{p,q}:S^3\to L(p,q)
$
the quotient map. The induced contact structure on $L(p,q)$ is still denoted by $\xi_0$, and is known as the universally tight contact structure.

A knot
$
K\subset L(p,q)
$
is said to be $p$-unknotted if there exists an immersion
$
u:\mathbb D\to L(p,q)
$
such that
$
u|_{\mathbb D\setminus \partial\mathbb D}
$
is an embedding into $L(p,q)\setminus K$, while the boundary map
$
u|_{\partial\mathbb D}:\partial\mathbb D\to K
$
is a covering of degree $p$. Such a map is called a $p$-disk for $K$.

Suppose now that $K$ is transverse to $\xi_0$ and admits a $p$-disk $u$. Choose a non-vanishing section $X\in \Gamma(u^*\xi_0)$ defined near the boundary. Pushing a $p$-fold cover of $K$ slightly in the direction of $X$ produces a knot $K'$ disjoint from $K$. The rational self-linking number is defined by
\[
{\rm sl}(K)=\frac{K'\cdot u}{p^2},
\]
where $K'\cdot u$ denotes the algebraic intersection number. Orientations are chosen as follows: $K$ is oriented by the Reeb flow, $K'$ inherits the same orientation, the disk $u$ is oriented compatibly with $K$, and the ambient manifold is oriented by $\alpha_0\wedge d\alpha_0>0.$

A basic example is obtained from
$K=\pi_{p,q}(S^1\times\{0\})\subset L(p,q).$ A $p$-disk is given by
$\pi_{p,q}\circ u,$ where $
u(z)=(z,\sqrt{1-|z|^2}), z\in \mathbb D.$ A direct computation shows that ${\rm sl}(K)=-1/p.$ The value $-1/p$ is distinguished. In fact, a transverse $p$-unknot in $(L(p,q),\xi_0)$ has self-linking number equal to $-1/p$ if and only if one can choose a $p$-disk whose characteristic foliation has exactly one singular point, and this singularity is elliptic in the standard sense. More precisely, if $V$ locally defines the characteristic foliation, then the linearization $DV$ at the singular point has two real eigenvalues with the same sign.

We next recall the definition of the rotation number. Let $P=(x,T)$ be a contractible periodic Reeb orbit in $(L(p,q),\xi_0)$. Hence $P$ can be seen as a periodic orbit in $S^3$. Denote by $\varphi_t$ the Reeb flow and by $d\varphi_t:\xi_0|_{x(0)}\to \xi_0|_{x(t)}$ its linearization along the orbit. Choose a global framing $\{V_1,V_2,V_3\}$
of the tangent bundle of $S^3$ such that $V_3=R$ and $(\alpha_0\wedge d\alpha_0)(V_1,V_2,R)>0.$ Writing
\[
d\varphi_t(V_1(x(0)))=a_1(t)V_1(x(t))+a_2(t)V_2(x(t))+a_3(t)R(x(t)),
\]
we observe that $(a_1(t),a_2(t))\neq (0,0)$ for every $t$. Hence, the complex-valued function $a_1(t)+ia_2(t)$ admits a continuous argument $\theta(t),
\theta(0)=0.$ The rotation number of $P$ is defined by
\begin{equation}\label{rho}
\rho(P)=T\lim_{t\to\infty}\frac{\theta(t)}{2\pi t}.
\end{equation}
The limit exists and is independent of all auxiliary choices.

We now turn to pseudo-holomorphic curves. Let $(\mathbb R\times Y,J)$ be the symplectization of a contact three-manifold $(Y,\alpha)$. Write $a$ for the $\mathbb R$-coordinate. An almost complex structure $J$ is said to be compatible with $\alpha$ if
\[
J\cdot \partial_a=R, \qquad
J(\xi_0)=\xi_0,
\]
and the bilinear form $d\alpha(\cdot,J\cdot)$ defines a positive-definite metric on $\xi_0$. The space of such almost complex structures will be denoted by $\mathcal J(\alpha).$

Let $(\Sigma,j)$ be a connected Riemann surface, possibly with boundary, and let
$\Gamma\subset \Sigma\setminus \partial\Sigma$ be a finite set of punctures. Define
$\dot\Sigma=\Sigma\setminus \Gamma.$
A map $\tilde u=(a,u):\dot\Sigma\to \mathbb R\times Y$ is called a finite energy $J$-holomorphic curve if it satisfies the Cauchy-Riemann equation
\[
d\tilde u\circ j=J(\tilde u)\circ d\tilde u,
\]
or equivalently $\bar\partial_J\tilde u=0,$ and has finite Hofer energy
$0<E(\tilde u)<\infty,$ where
\[
E(\tilde u)=\sup_{\phi\in \mathcal T}
\int_{\dot\Sigma}
\tilde u^*d(\phi(a)\alpha), \qquad
\mathcal T=\left\{
\phi:\mathbb R\to [0,1]\mid\phi'\ge 0
\right\}.
\]

Finite energy curves were introduced by Hofer \cite{Hofer1993} in order to study periodic orbits of Reeb flows through holomorphic curve methods. In Reeb dynamics of dimension three, they have proved particularly useful because of their very rigid asymptotic behavior.

The curve $\tilde u$ is called nicely embedded when the projected map $u:\dot\Sigma\to Y$ is an embedding. Every non-removable puncture
$z_0\in \Gamma$ has a sign
$\epsilon(z_0)\in\{-1,+1\},$
determined by the behavior of the $\mathbb R$-coordinate:
$a(z)\to \epsilon(z_0)\infty$
as $z\to z_0.$ If $\epsilon(z_0)=+1$, the puncture is called positive; otherwise, it is called negative.
Choose holomorphic polar coordinates
$(s,t)\in [0,\infty)\times \mathbb R/\mathbb Z$ near a puncture. Then, after passing to a subsequence if necessary, there exists a periodic Reeb orbit $P=(x,T)$ such that
$u(s_n,\cdot)\to x(\epsilon(z_0)T\cdot)$ in $C^\infty$ as $s_n\to\infty$. The orbit $P$ is called an asymptotic limit of the curve at the puncture.

When the asymptotic orbit is nondegenerate, the asymptotic limit is unique, and the convergence admits a precise asymptotic description. This fact plays a central role in compactness and intersection arguments for pseudo-holomorphic curves.

\subsection{Weakly convex foliations}\label{sec: weakly convex foliations}

Consider the $l$-holed lens space
$
\mathcal M=L(p,q)\setminus \bigcup_{j=1}^l \mathcal B_j,
$
equipped with the standard contact structure, where the $\mathcal B_j\subset L(p,q)$ are pairwise disjoint open $3$-balls. Then
\[
\partial \mathcal M=\bigcup_{j=1}^l \mathcal S_j,
\qquad
\mathcal S_j=\partial \mathcal B_j,
\]
is a disjoint union of embedded two-spheres. We continue to denote by $\alpha_0$ and $\xi_0$ the restrictions of the standard contact form and contact structure to $\mathcal M$.

Fix a contact form $\alpha=f\alpha_0$ on $\mathcal M$, where $f:\mathcal M\to \mathbb R^+$ is smooth. Assume that each boundary component $\mathcal S_j$ contains a hyperbolic periodic Reeb orbit
$
P_{2,j}\subset \mathcal S_j
$
which divides $\mathcal S_j$ into two hemispheres. We assume that the Reeb vector field points in opposite directions along the two hemispheres of
$
\mathcal S_j\setminus P_{2,j}.
$
In particular, the orbit $P_{2,j}$ has rotation number equal to $1$. Assume that the hemispheres of $\mathcal S_j \setminus P_{2,j}$ are projections of $J$-holomorphic planes, where $J\in \mathcal J(\alpha)$.

Assume moreover that the interior of $\mathcal M$ contains a $p$-unknotted periodic orbit
$
P_3\subset \mathcal M\setminus \partial \mathcal M
$
with self-linking number
$
{\rm sl}(P_3)=-1/p
$
and rotation number
$
\rho(P_3^p)>1.
$
We denote the collection of distinguished periodic orbits by
$
\mathcal P=\{P_3,P_{2,1},\ldots,P_{2,l}\},$ and call it the binding.

Finite energy foliations adapted to these data were introduced in several related forms in the literature on pseudo-holomorphic curves and Reeb dynamics, especially in the work of Hofer, Wysocki and Zehnder, and later developments in \cite{dPS1, dPHKS, FS, char1,char2,HWZI, HWZII, HWZIII, HWZ98, HWZ03, Wendl08, Wendl08b, Wendl10a, Wendl10}. Roughly speaking, such foliations organize the global dynamics by means of embedded holomorphic curves asymptotic to prescribed Reeb orbits.

\begin{defi}\label{defi: Finite energy foliation}
A finite-energy foliation adapted to $\alpha$, $\mathcal P$ and $J$ is a regular foliation $\tilde{\mathcal F}$
of the symplectization $\mathbb R\times \mathcal M$ whose leaves are either:
\begin{itemize}
\item trivial cylinders over the binding orbits in $\mathcal P$, or
\item images of nicely embedded finite-energy $J$-holomorphic curves with uniformly bounded energy and asymptotic limits contained in $\mathcal P$.
\end{itemize}
Moreover, if $F\in \tilde{\mathcal F},$
then every $\mathbb R$-translate $F+a, a\in \mathbb R,$ also belongs to $\tilde{\mathcal F}$.
\end{defi}

Projecting the leaves to $\mathcal M$ produces a singular foliation transverse to the Reeb flow away from the binding orbits. This leads to the following notion.

\begin{defi}[\cite{dPS3}]\label{def: weakly convex foliation}
A weakly convex foliation $\mathcal F$ of $\mathcal M$, adapted to $\alpha$ and $\mathcal P$, is a singular foliation whose singular set is $\bigcup_{P\in \mathcal P} P,$ and such that the complement of the singular set is foliated by properly embedded surfaces transverse to the Reeb vector field. Each regular leaf
$\dot\Sigma\hookrightarrow \mathcal M\setminus \bigcup_{P\in\mathcal P}P$
is a punctured sphere $\dot\Sigma=\mathbb CP^1\setminus \Gamma,\, 0<\#\Gamma<\infty,$ whose punctures are asymptotic to some orbit $P_{2,j}$ or to $P_3^p$. The closure $\Sigma\subset \mathcal M$ is required to be an immersion. A puncture $z\in \Gamma$ is called positive if the boundary orientation induced on the asymptotic orbit agrees with the Reeb flow orientation. Otherwise, it is called negative. The asymptotic limits of a leaf are all distinct, and every leaf has exactly one positive puncture. If a leaf is asymptotic to $P_3^p$, then this puncture is positive and all remaining punctures are negative. If $P_3^p$ is not an asymptotic limit, then the leaf is simply one of the hemispheres of $\mathcal S_j\setminus P_{2,j}$ for some $j$.
\end{defi}

For generic almost complex structures, the leaves appearing in a weakly convex foliation are particularly simple. They consist only of:
\begin{itemize}
\item planes asymptotic to $P_3^p$,
\item cylinders with one positive puncture at $P_3^p$ and one negative puncture at some $P_{2,j}$,
\item planes projecting onto the hemispheres of $\mathcal S_j\setminus P_{2,j}$.
\end{itemize}

More precisely, there are exactly $l$ one-parameter families of planes asymptotic to $P_3^p$, together with $l$ rigid cylinders connecting $P_3^p$ to the orbits $P_{2,j}$. Each family of planes degenerates at its ends into a building formed by one rigid cylinder and one boundary hemisphere. Thus the rigid cylinders separate the different families of planes.

When $l=1$, the resulting configuration is called a $3-2$ foliation. When $l=2$, it is called a $2-3-2$ foliation. These names reflect the indices of the binding orbits involved in the construction.

We now state the hypotheses under which such foliations exist.

\begin{thm}[{\cite[Theorem 1.3]{LS25}}]\label{thm_weakly_convex}
Let $\mathcal M$, $\alpha$ and $J$ satisfy the conditions above. Let $\mathcal D\subset \mathcal M \setminus (P_3\cup \cup_{j=1}^l P_{2,j})$ be a $p$-disk for $P_3$, and let $\mathcal S(P_3,\alpha):=\int_{\mathcal D} |d\alpha|$ be its $d\alpha$-area. Let $\mathcal P'$
denote the set of contractible periodic Reeb orbits $P'$ in
$\mathcal M\setminus \left(P_3\cup \cup_{j=1}^l P_{2,j}\right)$ satisfying
$\rho(P')=1,\, {\rm link}(P',P_3^p)=0,$
and $\mathcal A(P')\le \mathcal S(\mathcal D).$ If $\mathcal P'=\emptyset,$ then $\mathcal M$ admits a weakly convex foliation adapted to $\alpha$, $\{P_3,P_{2,1},\ldots,P_{2,l}\}$ and $J'$, where $J'\in \mathcal J(\alpha)$ is $C^\infty$-arbitrarily close to $J$ and coincides with $J$ near $\partial \mathcal M$.
The leaves consist of:
\begin{itemize}
\item $l$ families of planes asymptotic to $P_3^p$,
\item $l$ rigid cylinders connecting $P_3^p$ to the orbits $P_{2,j}$,
\item for each $j$, two boundary planes asymptotic to $P_{2,j}$ projecting onto the hemispheres of $\mathcal S_j\setminus P_{2,j}$.
\end{itemize}
In particular, when $l=1$ one obtains a $3-2$ foliation, while for $l=2$ one obtains a $2-3-2$ foliation.
\end{thm}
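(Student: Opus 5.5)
This result is quoted from \cite[Theorem 1.3]{LS25}, and I would prove it by the Hofer--Wysocki--Zehnder strategy for $3-2$ foliations \cite{HWZ03, dPS1, dPS2}, adapted to $p$-unknots in lens spaces as in \cite{HS2}. The foliation is produced in two stages. First I build the rigid cylinders and one family of planes. Then I show that the planes sweep out everything between the cylinders and the boundary hemispheres.

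\textbf{Step 1: a starting plane.} Since ${\rm sl}(P_3)=-1/p$, I can choose a $p$-disk $\mathcal D$ whose characteristic foliation has a single nicely elliptic singularity. Bishop's filling by holomorphic disks with boundary on a totally real lift of $\mathcal D$, as in \cite{char1, HS2}, then gives a one-parameter family of disks emanating from the elliptic point. Because the boundary hemispheres are projections of $J$-holomorphic planes, positivity of intersections keeps these disks away from $\partial\mathcal M$. Their energy is bounded by $\mathcal S(\mathcal D,\alpha)$. Following the family until bubbling occurs, the only possible limit is a finite energy plane asymptotic to $P_3^p$. Here the hypotheses on $\mathcal P'$ and $\rho(P_3^p)>1$ do the work: a bubble asymptotic to some other orbit $P'$ would force $\rho(P')=1$, ${\rm link}(P',P_3^p)=0$ and $\mathcal A(P')\le\mathcal S(\mathcal D)$, which is excluded. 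Since $\mu_{CZ}(P_3^p)\ge 3$, Fredholm theory shows that this plane is an embedded, automatically transverse plane of Fredholm index $2$.

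\textbf{Step 2: compactness of the family.} I follow the resulting $\R$-family of planes, modulo $\R$-translation, as a one-parameter family of projections in $\mathcal M$. Intersection theory (Siefring's intersection numbers together with winding numbers at $P_3^p$) shows that the projections are pairwise disjoint embeddings transverse to the Reeb field. I then analyse the SFT-compactness limits of a sequence leaving every compact subset of the moduli space. By action, index and linking arguments, every orbit appearing in the limit building either lies in the binding or belongs to $\mathcal P'$, and $\mathcal P'$ is empty. With $P_{2,j}$ of index $2$ and $P_3^p$ of index at least $3$, the only possible limit is a two-level building: a cylinder from $P_3^p$ to some $P_{2,j}$, which has index $1$ and is therefore rigid, on top of one of the boundary planes. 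For a generic $J'$ that is $C^\infty$-close to $J$ and unchanged near $\partial\mathcal M$, which I need so that the cylinders are regular and the boundary planes persist, such cylinders exist and are regular.

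\textbf{Step 3: gluing and coverage.} Gluing each rigid cylinder to the corresponding hemisphere plane produces new families of planes asymptotic to $P_3^p$. Repeating Step 2 for each of them, the families close up cyclically around $P_3^p$ and give $l$ families separated by $l$ rigid cylinders. An open-closed argument then shows that the union of all leaves is the whole of $\mathcal M$: the union is open by the implicit function theorem and closed by the compactness of Step 2.

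The main obstacle is Step 2, namely ruling out every exotic limit building. The critical case is a limit involving an orbit $P'$ with $\rho(P')=1$. To exclude it, I would compute the linking of $P'$ with $P_3^p$ from the fact that the limit curves avoid the binding, and compare $\mathcal A(P')$ with the energy of the family, which is at most $\mathcal S(\mathcal D,\alpha)$. This contradicts $\mathcal P'=\emptyset$.
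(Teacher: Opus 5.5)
The paper does not prove this statement; it quotes it from \cite[Theorem 1.3]{LS25}. Your outline is the standard Hofer--Wysocki--Zehnder / Hryniewicz--Salom\~ao route: a Bishop family on a $p$-disk, a plane asymptotic to $P_3^p$, compactness of the family of planes, breaking into a rigid cylinder on top of a hemisphere, then gluing and an open--closed argument. Its analytic core, however, has a genuine gap: it runs on unconstrained Fredholm index counts, and these only work when $\mu_{\rm CZ}(P_3^p)=3$.

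A plane asymptotic to $P_3^p$ has index $\mu_{\rm CZ}(P_3^p)-1$, and a cylinder from $P_3^p$ to $P_{2,j}$ has index $\mu_{\rm CZ}(P_3^p)-2$. The hypothesis $\rho(P_3^p)>1$ allows any $\mu_{\rm CZ}(P_3^p)\ge 3$, and it does not even make $P_3$ nondegenerate; for the retrograde orbit in this paper only the lower bound is known. If $\mu_{\rm CZ}(P_3^p)\ge 5$, three things fail:
\begin{itemize}
\item the planes no longer form a one-parameter family modulo $\R$;
\item the cylinders are not rigid;
\item your index budget in Step 2 no longer rules out more complicated buildings.
\end{itemize}
The missing idea is to work throughout with \emph{fast} curves, i.e.\ $\mathrm{wind}_\infty=1$ at $P_3^p$. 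For embedded planes in $\mathcal M\setminus P_3$, this is exactly what ${\rm sl}(P_3)=-1/p$ provides. In an exponentially weighted Fredholm setting, fast planes and fast cylinders have index $2$ and $1$ regardless of $\mu_{\rm CZ}(P_3^p)$. They are automatically transverse because $\mathrm{wind}_\pi=0$.

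The same ingredient is what makes the hypothesis $\mathcal P'=\emptyset$ usable at all, and this part of your argument is unsupported. In both Step 1 and Step 2 you assert that every intermediate orbit must have $\rho=1$, and you credit this to the hypotheses. But $\mathcal P'=\emptyset$ says nothing about unlinked orbits of small action with $\rho>1$, and such orbits can be asymptotic limits of planes. What excludes them is a winding count. Let $\tilde v$ be the top level of a limit of fast planes, with positive end at $P_3^p$ (where $\mathrm{wind}_\infty\le 1$ by semicontinuity of the asymptotic winding) and negative ends $z_1,\dots,z_k$ at orbits $P_1,\dots,P_k$. Then
\[
0\le \mathrm{wind}_\pi(\tilde v)=\mathrm{wind}_\infty(\tilde v,+)-\sum_{i=1}^k\mathrm{wind}_\infty(\tilde v,z_i)+k-1 .
\]
The capping planes below give $\rho(P_i)\ge 1$, hence $\mathrm{wind}_\infty(\tilde v,z_i)\ge 1$, and the inequality then forces $\mathrm{wind}_\infty(\tilde v,z_i)=1$ for all $i$. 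At a negative end this winding is at least $2$ whenever $\rho(P_i)>1$, so $\rho(P_i)=1$. An analogous count is needed for the bubbles in Step 1.

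Only after this do the remaining hypotheses bite: ${\rm link}(P_i,P_3^p)=0$, $\mathcal A(P_i)<p\,\mathcal A(P_3)\le\mathcal S(\mathcal D,\alpha)$ by Stokes, and $\mathcal P'=\emptyset$ together force $P_i\in\{P_{2,1},\dots,P_{2,l}\}$. Even then, a top level with several negative ends passes the winding test, for example a pair of pants from $P_3^p$ to $P_{2,1}$ and $P_{2,2}$, whose weighted index is $0$. Ruling it out is one place where genericity of $J'$ is genuinely needed. It is not needed for the cylinders, which are automatically regular, nor for the boundary planes, which persist because $J'=J$ near $\partial\mathcal M$.
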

One of the consequences of a weakly convex foliation is the existence of homoclinic or heteroclinic trajectories to some Lyapunov orbits.

\begin{cor}[{\cite[Theorem~1.4]{LS25}}]\label{cor: homoclinic}
Assume that the orbit $P_{2,j_0}$ has the largest action among the boundary Lyapunov orbits appearing in Theorem~\ref{thm_weakly_convex}. Then either:
\begin{itemize}
\item $P_{2,j_0}$ possesses a homoclinic orbit, or
\item there exists a heteroclinic orbit from $P_{2,j_0}$ to another orbit $P_{2,j'},j'\neq j_0.$
\end{itemize}
In particular, when $l=1$, a homoclinic orbit to $P_{2,1}$ always exists.
\end{cor}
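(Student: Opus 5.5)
This corollary is a restatement of \cite[Theorem~1.4]{LS25} in the present setting, so the plan is to rerun that argument on the weakly convex foliation $\mathcal F$ given by Theorem~\ref{thm_weakly_convex}. Write $W^s(P_{2,j})$ and $W^u(P_{2,j})$ for the two-dimensional stable and unstable manifolds of the hyperbolic boundary orbits. Since each hemisphere of $\mathcal S_j\setminus P_{2,j}$ is transverse to the flow with the two hemispheres pointing in opposite directions, one hemisphere is an exit set and the other an entrance set. Hence exactly one branch of $W^u(P_{2,j})$ and one branch of $W^s(P_{2,j})$ enter the interior of $\mathcal M$; the other branches leave $\mathcal M$ immediately. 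The goal is to show that the interior unstable branch of $P_{2,j_0}$ intersects the interior stable branch of some $P_{2,j'}$. Such an intersection is a homoclinic orbit if $j'=j_0$ and a heteroclinic orbit otherwise.

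\textbf{Step 1: follow the unstable branch into the foliation.} Near $P_{2,j_0}$, the interior branch of $W^u(P_{2,j_0})$ is a cylinder. Its intersection with a leaf of the family of planes asymptotic to $P_3^p$, chosen close to the breaking at the rigid cylinder $C_{j_0}$ and the boundary hemisphere, is a simple closed curve $\gamma$. The asymptotic description of the planes near the building shows that $\gamma$ bounds a disk in that leaf. Because the leaf is transverse to the Reeb flow, $d\alpha$ restricts to an area form on it, and Stokes' theorem gives
\[
\int_\gamma \alpha = \mathcal A(P_{2,j_0}).
\]
The same construction applies to the interior stable branch of each $P_{2,j}$.

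\textbf{Step 2: compare with the return map.} The rigid cylinders and the plane families together give a global section of the flow on the complement of the binding and the invariant manifolds. I would define a first-hit map between consecutive leaves and argue by contradiction. Suppose $W^u(P_{2,j_0})$ never meets any interior $W^s(P_{2,j})$. Then the forward flow of the disk bounded by $\gamma$, with the hemisphere-exit sets removed, is defined on a region that is invariant under the first return map. That map preserves the area $d\alpha$.

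\textbf{Step 3: exploit maximal action (main obstacle).} This is the step I expect to be hardest. One must show that the region enclosed by $\gamma$ cannot be mapped strictly inside itself, nor disjointly from the regions cut out by the stable manifolds. The plan is to compare the $d\alpha$-areas of the disks bounded by the unstable and stable curves; each area equals the action of the corresponding $P_{2,j}$ by Step 1. Since $\mathcal A(P_{2,j_0})$ is maximal, the unstable disk has area at least that of every stable disk. Area preservation of the return map then forces the image of the unstable disk to overlap some stable disk, so the unstable curve must cross some stable curve. This produces the required homoclinic or heteroclinic orbit.

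Handling orbits that escape through the exit hemispheres requires the full structure of the rigid cylinders. For that part I would defer to \cite[Theorem~1.4]{LS25}. When $l=1$ the only stable branch belongs to $P_{2,1}$, so the intersection produced is a homoclinic orbit to $P_{2,1}$.
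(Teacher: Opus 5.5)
\paragraph{Overall assessment.} The paper gives no proof of this corollary. It quotes \cite[Theorem~1.4]{LS25} and points to the standard arguments of \cite{Conley1968, HWZ03}. Your Steps 1 and 2 set up the right objects. Step 3, which carries the whole content of the statement, has a genuine gap.

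\paragraph{Where Step 3 fails.} The inference ``the image of the unstable disk overlaps some stable disk, so the unstable curve must cross some stable curve'' is not valid. Let $U_j^-$ be the exit hemisphere of $\mathcal S_j$. Let $D^s_j$ be the disk bounded by $W^s(P_{2,j})\cap D$ in a plane $D$ near the breaking at $C_j\cup U_j^-$. Then $\mathrm{int}\,D^s_j$ is the set of points that leave $\mathcal M$ through $U_j^-$ before returning to $D$.
\begin{itemize}
\item If $\mathcal A(P_{2,j})<\mathcal A(P_{2,j_0})$, nothing prevents the disk bounded by the unstable curve from strictly containing $D^s_j$ with disjoint boundaries. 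Then the disks overlap but no intersection of invariant manifolds is produced.
\item In that case part of your unstable disk exits, and what remains in $\mathcal M$ is an annulus. Iterating the return map on ``the unstable disk'' therefore breaks down.
\item Step 2 also has a problem. The interior of the disk bounded by $\gamma$ consists of orbits that have just entered through the entrance hemisphere, so its forward flow does not give a region invariant under the return map.
\end{itemize}
Maximality of $\mathcal A(P_{2,j_0})$ does not force overlap. It is used in the opposite direction.

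\paragraph{The missing idea.} Follow the unstable \emph{curve} rather than the disk. Suppose $W^u(P_{2,j_0})$ meets no $W^s(P_{2,j})$.
\begin{itemize}
\item Each time the front of the interior branch of $W^u(P_{2,j_0})$ reaches such a plane, it is a simple closed curve $\Gamma$ disjoint from $\partial D^s_j$.
\item Since $R$ is tangent to $W^u$, we have $d\alpha|_{W^u}=0$, and hence $\int_\Gamma\alpha=\mathcal A(P_{2,j_0})$. So $\Gamma$ bounds a disk of area $\mathcal A(P_{2,j_0})\ge\mathcal A(P_{2,j})=\int_{D^s_j}d\alpha$.
\item Such a disk cannot lie strictly inside $D^s_j$. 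Hence $\Gamma$ lies outside $D^s_j$, and no orbit of the branch ever leaves $\mathcal M$. This is where maximality enters.
\item Orbits that stay in $\mathcal M$ and are not asymptotic to the $P_{2,j}$ cross the leaves asymptotic to $P_3^p$ cyclically. So the branch returns to a fixed plane $D$ infinitely often, along pairwise disjoint simple closed curves $\Gamma_n$. Each $\Gamma_n$ bounds a disk of area exactly $\mathcal A(P_{2,j_0})$.
\item Disjoint Jordan curves bounding disks of equal positive $d\alpha$-area cannot be nested, so these disks are pairwise disjoint.
\item This is impossible, because $\int_D d\alpha=\mathcal A(P_3^p)<\infty$.
\end{itemize}
This is precisely the part you defer to \cite{LS25}. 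Once it is in place, your remark about the case $l=1$ is correct.
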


\section{Proof of Proposition \ref{prop_strict_convexity}}
\label{sec_convexity}

In this section, we study the convexity properties of the Hamiltonian \eqref{new_Hamiltonian} given in Proposition \ref{prop_strict_convexity}.  For simplicity we consider instead of $\hat K$, the rescaled Hamiltonian $\frac{9}{64}\hat K(\frac{8}{3}y,x)$, which is still denoted by $\hat K$ and takes the form
\begin{equation}\label{hatK}
\hat K(y,x) =\frac{(y_1-x_2^3)^2}{2}+ \frac{(y_2+x_1^3)^2}{2}+\frac{9}{32}(x_1^2+x_2^2)(2-3(x_1^2-x_2^2)^2).
\end{equation}
In this case, the critical level is $\mathfrak h_c =1/(4\sqrt{2})$. Notice that the convexity properties are unchanged under the rescalings of $\hat {K} $.

We have to show that for every energy $h\leq \mathfrak h_c$, the corresponding sphere-like regularized component $\mathcal M_h= \hat K^{-1}(h)$ is strictly convex.  To do that, we shall prove that the tangential Hessian of $\hat K$ is positive-definite at every regular point of $\mathcal M_h\setminus S,$ where $S$ is the singular set of $\mathcal M_h$. Recall that $S=\emptyset$ for every $E <\mathfrak h_c$ and $S$ contains $4$ singular points for $h=\mathfrak h_c$. It turns out that this local convexity condition implies the global strict convexity property for $\mathcal M_h$, see \cite{LS25}, since all four singularities are located in a common hyperplane.

To prove the local convexity condition, we use a criterion introduced in \cite{LS25} for the class of mechanical-magnetic Hamiltonians of the form
\begin{equation}\label{equ: magnetic hamiltonian}
\mathrm K(y,x)=\frac{(y_1+f_1(x))^2}{2}+\frac{(y_2+f_2(x))^2}{2}+V(x),\quad \forall y,x\in \mathbb{R}^2,
\end{equation}
where $F(x)=(f_1(x),f_2(x))$ is a smooth magnetic field and $V(x)$ is a smooth potential function.

Fix a sphere-like subset $\mathcal{M}\subset \mathrm K^{-1}(h)$ and assume that $\mathcal{M}$ has at most finitely many singularities of saddle-center type. Denote by $S$ the singular set of $\mathcal{M}$. Consider the projection $\mathcal H \subset \R^2$ of $\mathcal M$ under
$$
\Pi_x:\R^4\to \R^2, \qquad (y,x) \mapsto x.
$$

We assume that $\mathcal H=\Pi_x(\mathcal M)$ is a topological closed disk whose interior $\dot{\mathcal{H}}$ corresponds to points $(y,x) \in \mathcal M \setminus S$ satisfying $V(x) <h$ and thus $y\neq-F(x)$. The points projecting to the boundary $\partial \mathcal{H}$ satisfy $V(x) =h$ and thus $y=-F(x)$.  

The following criterion is proved in \cite{LS25}.

\begin{thm}[{\cite[Theorem 9.1]{LS25}}] \label{thm: convexity formula}
Assume that $F(x)=(f_1(x),f_2(x))$ is decoupled, i.e., the mixed second order derivative $f_{j,12}$ of $f_j, j=1,2,$ vanish identically. Denote $(s,t):=(\cos \theta, \sin \theta)$. Then the sectional curvatures at $(y,x)\in \mathcal M\setminus S$ are all positive if and only if
\begin{equation}\label{G}
\begin{aligned}
G(\theta,x):= &r^2c(\theta,x)d(\theta,x)+c(\theta,x)V_2(x)^2+d(\theta,x)V_1(x)^2\\ & -r^2V_{12}(x)^2-2V_{12}(x)V_1(x)V_2(x)>0,
\end{aligned}
\end{equation}
for every $\theta\in \R / 2\pi \Z, x\in \mathcal H \setminus \Pi_x(S)$, where $r=r(x)>0$ and $\theta\in \R / 2\pi \Z$ are determined by $y+F(x)=r(x)e^{i\theta}=\sqrt{2(h-V(x))}e^{i\theta}$, and
$$
\begin{aligned}
    c(\theta, x) &=r(sf_{1,11}(x)+tf_{2,11}(x))+V_{11}(x), \\
    d(\theta,x) & =r(sf_{1,22}(x)+tf_{2,22}(x))+V_{22}(x).
\end{aligned}
$$
In particular, if $G(\theta,x)>0$ for every $(\theta,x)\in \R / 2\pi \Z \times (\mathcal H\setminus \Pi_x(S))$, then $\mathcal M$ is strictly convex.
Here, we use the notation $V_j = \partial_{x_j}V$, $V_{ij}=\partial^2_{x_jx_i}V$, etc.
\end{thm}

Notice that if $r(x)=0$, then $G(\theta,x)$ does not depend on $\theta$. Also, if  $F\equiv 0$  (that is, ${\rm K}$ is a mechanical Hamiltonian), then this criterion coincides with the one considered in \cite{Salomao2004}.

The magnetic field of the regularized Hamiltonian $\hat K$ given in \eqref{hatK} is decoupled and given by
$$
\hat F(x):=(\hat f_1(x_2),\hat f_2(x_1))=(-x_2^3,x_1^3).
$$
The new potential writes as
$$
\hat V(x):=\frac{9}{32} (x_1^2 + x_2^2) (2 - 3 (x_1^2 - x_2^2)^2).
$$
The critical values of $\hat V$ and $\hat K$ coincide with $\h_c=1/(4\sqrt{2})$ and the critical points of $\hat V$ are
\begin{equation}\label{equ: singularities}
\pm(2^{1/4}/\sqrt{3},0) \quad \mbox{ and } \quad \pm(0,2^{1/4}/\sqrt{3}).
\end{equation}
These are precisely the points of $\Pi_x(S).$

For each $0<h\leq \mathfrak h_c$, denote by $\mathcal H_h$ the energy surface projection $\Pi_x(\mathcal M_h)$, and denote $y+\hat F=r_h(x)(s+it)$, where
$$
r_h(x)^2=2(h-\hat V(x))= 2h - \frac{9}{16} (x_1^2 + x_2^2) (2 - 3 (x_1^2 - x_2^2)^2).
$$

We have
$$
\mathcal H_{h'} \subset \mathcal H_h \setminus \partial \mathcal H_h \qquad \forall 0 <h'<h\leq \mathfrak h_c.
$$
The expression for $G(\theta,x)$ in \eqref{G} is denoted $G_h(\theta,x)$ for each $h$, and the corresponding functions $c$ and $d$ are denoted $c_{h,t}(x)$ and $d_{h,s}(x)$, respectively, that is
\begin{equation}\label{G_2}
\begin{aligned}
G_h(\theta,x):= &r_h(x)^2c_{h,t}(x)d_{h,s}(x)+c_{h,t}(x)\hat V_2(x)^2+d_{h,s}(x)\hat V_1(x)^2\\ & -r_h(x)^2\hat V_{12}(x)^2-2\hat V_{12}(x)\hat V_1(x)\hat V_2(x),
\end{aligned}
\end{equation}
and
$$
\begin{aligned}
    c_{h,t}(x) &=r_h(x) t\hat f_{2,11}(x_1)+\hat V_{11}(x)=6tx_1 r_h(x)+\frac{9}{16}(2-45x_1^4+3x_2^4+18x_1^2x_2^2),\\
    d_{h,s}(x)& =r_h(x) s\hat f_{1,22}(x_2)+\hat V_{22}(x)=-6sx_2r_h(x)+\frac{9}{16}(2+3x_1^4-45x_2^4+18x_1^2x_2^2),
\end{aligned}
$$
where $s=\cos \theta$ and $t=\sin \theta$.

Our goal is to prove that for every $0< h\leq \mathfrak h_c$
\begin{equation}\label{Gh_positive}
G_h(\theta,x) >0, \qquad \forall x\in \mathcal H_h \setminus \Pi_x(S),\quad \theta \in \R / 2\pi \Z.
\end{equation}
Due to the symmetry of $\hat K$, it is enough to show that \eqref{Gh_positive} holds on $(\mathcal H_h \setminus \Pi_x(S)) \cap \{x_2\geq x_1 \geq 0\}$. Therefore, we assume in the computations below that $x\in \mathcal H_h$ lies in the second octant, that is, we only consider $x=(x_1,x_2)$ in the subset
$$
\mathcal H_h':=\{(x_1,x_2) \in \mathcal H_h \setminus \Pi_x(S),  x_2\geq x_1 \geq 0\},
$$
see Figure \ref{fig:Lunar_Hill_region}.

For every $0<h\leq \mathfrak h_c$, the boundary of $\mathcal H_h$ restricted to the first quadrant of the $x$-plane, can be written as the graph of a strictly decreasing function $x_2=x_2(x_1), x_1\in [0,R_h],$ satisfying $\hat V(x_1,x_2(x_1))=h$, where $0<R_h\leq R_{\mathfrak h_c}=2^{1/4}/\sqrt{3}$ is the unique solution of $\hat V(R_h,0)=\frac{9}{32}R_h^2(2-3R_h^4)=h$. This follows from
$$
\begin{aligned}
\hat V_1(x) = \frac{9}{16}x_1 (2 - 9 x_1^4 + 6 x_1^2 x_2^2 + 3x_2^4)>0,\\
\hat V_2(x)= \frac{9}{16}x_2 (2 + 3 x_1^4 + 6 x_1^2 x_2^2 - 9 x_2^4)>0,
\end{aligned}
$$
for every $(x_1,x_2)$ satisfying $0< x_1,x_2<2^{1/4}/\sqrt{3}$, where $\hat V_j = \partial_{x_j}V$. In particular, if $h=\mathfrak h_c$, we have $\hat V(2^{1/4}/3,2^{1/4}/3)=\mathfrak h_c$.

Since $r_h(x) \geq 0,$ $\hat f_{2,11}=6x_1\geq 0$ and $\hat f_{1,22}=-6x_2\leq 0$, we have
$$
\begin{aligned}
    c_{h,t}(x) & \geq c_{h,-1}(x)=-6r_h(x) x_1+\hat V_{11}(x), \qquad  \forall t\in[-1,1],\\
    d_{h,s}(x) &\geq d_{h,1}(x)=-6r_h(x) x_2+\hat V_{22}(x), \qquad \forall s\in [-1,1].
\end{aligned}
$$
Note that $\hat F$ and $\hat V$ are independent of $h$, and $\partial_h r_h,-\partial_hc_{h,-1},-\partial_hd_{h,1}\geq 0$. We identify $\partial \mathcal H_h$ with the corresponding curve of $\mathcal M_h$.

\begin{figure}[hbpt]
    \centering
    \includegraphics[width=0.4\linewidth]{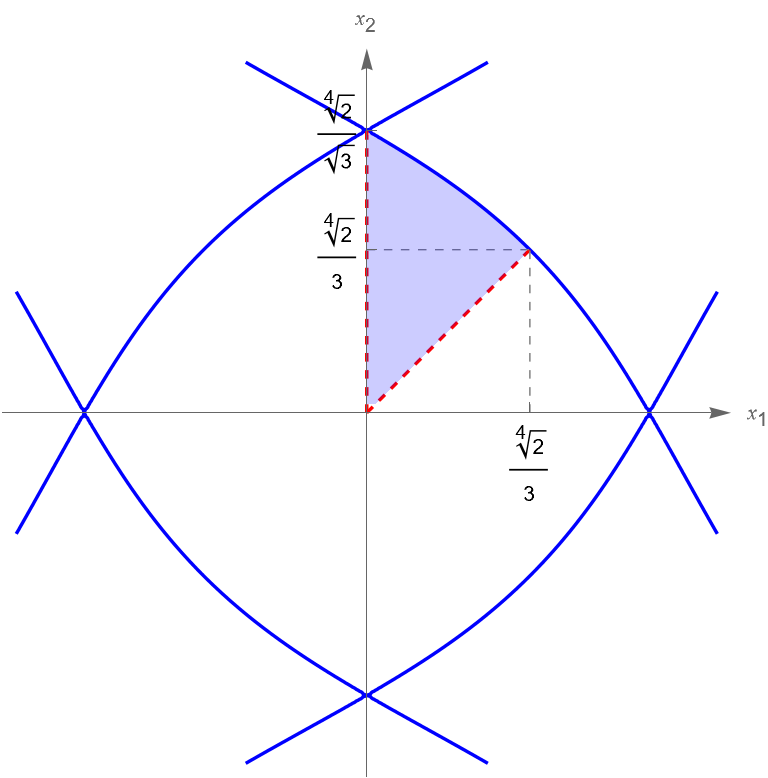}
    \caption{The Hill region $\mathcal H_{\mathfrak h_c}$ in the second octant.}
    \label{fig:Lunar_Hill_region}
\end{figure}

\begin{lem}\label{lem: ct positive}
$c_{h,t}(x)\geq c_{h,-1}(x)> 0$ holds for every $t\in[-1,1]$, $x\in \mathcal H_h'$ and $0<h\leq \h_c$.
\end{lem}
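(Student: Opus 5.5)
Since $\hat F$ and $\hat V$ do not depend on $h$ and $-\partial_h c_{h,-1}\geq 0$, while $\mathcal H_h'\subset \mathcal H_{\h_c}'$ for $h\leq \h_c$, it is enough to prove $c_{\h_c,-1}(x)>0$ on $\mathcal H'_{\h_c}$. The first inequality $c_{h,t}\geq c_{h,-1}$ was already observed above, using $r_h\geq 0$ and $\hat f_{2,11}=6x_1\geq 0$. Write
$$
P(x):=2-45x_1^4+3x_2^4+18x_1^2x_2^2 .
$$
The claim is then $6x_1 r(x) < \tfrac{9}{16}P(x)$, where $r=r_{\h_c}$.

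\textbf{Step 1: positivity of $P$.} On the second octant we have $x_2\geq x_1$, so
$$
P\geq 2-24x_1^4 .
$$
I would bound $x_1$ on $\mathcal H'_{\h_c}$. The monotonicity of $\hat V$ in each variable (from the signs of $\hat V_1,\hat V_2$) shows that the largest value of $x_1$ in the octant is attained at the diagonal boundary point $x_1=x_2=2^{1/4}/3$. Hence $x_1^4\leq 2/81$, and therefore $P\geq 2-48/81>0$. In particular the right-hand side $\tfrac{9}{16}P$ is positive.

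\textbf{Step 2: reduction to a polynomial inequality.} Both sides of $6x_1 r<\tfrac{9}{16}P$ are nonnegative, so it suffices to prove
$$
36x_1^2\,r^2 < \tfrac{81}{256}P^2, \qquad r^2 = 2\h_c-\tfrac{9}{16}|x|^2\bigl(2-3(x_1^2-x_2^2)^2\bigr).
$$
Substituting $u=x_1^2$ and $w=x_2^2$, this becomes a polynomial inequality $Q(u,w)>0$, with a single $\sqrt2$ coming from $\h_c$. It has to hold on the region
$$
\{\,0\leq u\leq w,\quad \hat V\leq \h_c\,\}.
$$
When $u=0$ the inequality is immediate. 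The crude estimate $r\leq\sqrt{2\h_c}$ together with $x_1\leq 2^{1/4}/3$ is not enough. So I would exploit the coupling: $x_1$ can be large only when $x_2\geq x_1$ is also large. Then $|x|^2$ is large, $\hat V$ is close to $\h_c$, and $r$ is small. At the corner point $x_1=x_2=2^{1/4}/3$ one has $r=0$, so the inequality holds there with margin.

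\textbf{Step 3: verifying $Q>0$.} I would split the region into two parts.
\begin{itemize}
\item[(a)] For $u$ small, roughly $x_1\leq 0.2$, I bound $r\leq \sqrt{2\h_c}$ and $P\geq 2-24u^2$. The inequality $6\sqrt{2\h_c}\,x_1<\tfrac{9}{16}(2-24x_1^4)$ then holds directly.
\item[(b)] For larger $u$, I use $w\geq u$ to bound $r^2$ from above. Since $\hat V$ is increasing in $w$ on the region, $r^2$ is at most its value on the diagonal $w=u$, namely $2\h_c-\tfrac{9}{8}\cdot 2u$. Likewise $P$ is increasing in $w$, so $P\geq 2-24u^2$. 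This reduces the problem to a one-variable polynomial inequality in $u$ on a compact interval, which I would check by elementary estimates or interval arithmetic.
\end{itemize}

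The main obstacle is the intermediate range of $x_1$. There neither crude bound suffices, and the diagonal reduction in Step 3(b) has to be checked carefully for tightness. If it fails somewhere, I would refine it by splitting further in $w$, using the exact level-curve constraint $\hat V\leq \h_c$ to bound $w$ from above.
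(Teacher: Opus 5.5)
Your proposal is correct and follows essentially the paper's route: reduce to $h=\h_c$, use monotonicity in $x_2$ to push to the diagonal $x_2=x_1$, and check the resulting one-variable inequality, which is exactly $c_{\h_c,-1}(x_1,x_1)=\frac38\bigl(3-36x_1^4-8x_1\sqrt{\sqrt2-9x_1^2}\bigr)>0$. The paper merges your two separate monotonicity facts (for $r$ and for $\hat V_{11}$) into the single estimate $\partial_{x_2}c_{\h_c,-1}\geq 0$, and the diagonal inequality holds on all of $0\le x_1\le 2^{1/4}/3$: squaring and using $3-36x_1^4\geq 19/9$ gives the lower bound $\frac{361}{81}-64\sqrt2\,x_1^2+576x_1^4\geq\frac{73}{81}$, so your case (a) and the contemplated further splitting are unnecessary.
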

\begin{proof}
Recall that $\partial_hc_{h,-1}\leq 0$ for every $0<h\leq \mathfrak h_c$. Hence,
$c_{h,t}\geq c_{h,-1}\geq c_{\h_c,-1}$. It is thus sufficient to consider $h=\h_c$.
Denote $c_{-1}:=c_{\h_c,-1}$ and $r:=r_{\h_c}$.  Then
\begin{equation}\label{equ: c_-1 x2}
\partial_{x_2}c_{-1}(x)=\frac{27}{4} x_2 \left(3 x_1^2 + x_2^2 + \frac{x_1}{2r} (2 + 3 x_1^4 + 6 x_1^2 x_2^2 - 9 x_2^4)\right)\geq 0, \quad \forall x\in \mathcal H_{\h_c},  r>0.
\end{equation}
Therefore, $c_{-1}(x_1,x_2)\geq c_{-1}(x_1,x_1)=\frac{3}{8} (3 - 36 x_1^4 - 8 x_1(\sqrt 2 - 9 x_1^2)^{1/2})>0, \forall x\in \mathcal H_{\h_c}$, where the last inequality follows from $(3 - 36 (2^{1/4}/3)^4)^2 - (8 x_1 (\sqrt 2 - 9 x_1^2)^{1/2})^2 = \frac{361}{81} - 64 \sqrt 2 x_1^2 + 576 x_1^4\geq \frac{73}{81}> 0$ for every $0\leq x_1\leq 2^{1/4}/3$.
\end{proof}

\begin{lem}\label{lem: r^2d+v2^2 positive}
$r_h(x)^2d_{h,1}(x)+\hat V_2(x)^2\geq 0$ for every $x\in \mathcal H_h'$, $0<h\leq \h_c$.
\end{lem}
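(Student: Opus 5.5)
We need to show $r_h^2 d_{h,1} + \hat V_2^2 \ge 0$ on $\mathcal H_h'$, where $d_{h,1} = -6 r_h x_2 + \hat V_{22}$. First we reduce to the critical level, as in Lemma \ref{lem: ct positive}. For fixed $x$, the expression is a function of $r=r_h(x)$ alone:
\[
\phi(r) = -6x_2 r^3 + \hat V_{22}(x) r^2 + \hat V_2(x)^2 .
\]
Since $r_h$ increases with $h$, the goal is to show $\phi$ is nonnegative on the whole interval $[0, r_{\h_c}(x)]$. Monotonicity in $h$ is not automatic here, because $\hat V_{22}$ may be negative. We therefore argue via the shape of $\phi$. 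We have $\phi(0)=\hat V_2^2\ge 0$, and $\phi'(r) = r(2\hat V_{22} - 18x_2 r)$. So on $r\ge 0$, $\phi$ is either decreasing throughout (when $\hat V_{22}\le 0$) or first increasing and then decreasing. In both cases $\phi$ is unimodal or decreasing on $[0,\infty)$, so its minimum over $[0,r_{\h_c}(x)]$ is attained at an endpoint. It thus suffices to check $h=\h_c$, i.e. to prove
\[
r^2(\hat V_{22} - 6x_2 r) + \hat V_2^2 \ge 0, \qquad r=r_{\h_c}(x),\quad x\in\mathcal H'_{\h_c}.
\]

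Next we split the second octant into regions. Where $d_{\h_c,1}(x)\ge 0$ there is nothing to prove. This holds near the origin, since $\hat V_{22}\approx 9/8$ there, $r^2\approx 2\h_c$ is small, and the term $6x_2 r$ is small. Near the boundary curve $\partial\mathcal H_{\h_c}$ we have $r\to 0$, and $\hat V_2^2>0$ dominates except at the singular point $(0,2^{1/4}/\sqrt3)$. That point is excluded from $\mathcal H'$, and near it we need an asymptotic analysis. The structure is as follows. At the saddle, $\hat V_2=0$, $\hat V_{22}<0$ (the saddle direction), and $r^2 \approx -\hat V_{22}\,\delta^2$ to leading order along $x_2$, where $\delta$ is the distance to the saddle. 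Hence $\hat V_2^2\approx \hat V_{22}^2\delta^2$ and $r^2\hat V_{22}\approx -\hat V_{22}^2\delta^2$, so the leading terms cancel. One must then verify that the remaining terms have the correct sign: the quadratic contribution from the $x_1$-direction (where $\hat V_{11}>0$) and the cubic correction $-6x_2r^3$.

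I expect this delicate cancellation at the singularity to be the main obstacle. It is exactly why the shear was introduced, and the sign of the next-order term must be computed explicitly. The plan there is to expand in local coordinates $(x_1, x_2-2^{1/4}/\sqrt3)$ up to third order and show the resulting homogeneous form is nonnegative on the relevant cone.

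Away from these neighborhoods, we again use the strategy of Lemma \ref{lem: ct positive}. Compute $\partial_{x_1}$ (or $\partial_{x_2}$) of the expression, prove it has a sign on the region, and so reduce to the edges $x_1=0$ or $x_1=x_2$. There we obtain one-variable polynomial inequalities in $x_2$ (after isolating the $\sqrt{\cdot}$ from $r$ and squaring, as in that lemma), which are then verified by elementary bounds.
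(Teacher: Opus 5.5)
Your reduction to $h=\h_c$ is correct. For fixed $x$, $\phi(r)=-6x_2r^3+\hat V_{22}r^2+\hat V_2^2$ is unimodal on $[0,\infty)$ with $\phi(0)\ge0$. The paper instead notes that $\phi(r)/r^2$ is decreasing in $r$; either works.

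The gap is at the critical level, which is the real content of the lemma. There you give a partition with heuristics rather than an argument.
\begin{itemize}
\item The one global tool, a sign for $\partial_{x_1}\phi$, is never checked, and it is false on all of $\mathcal H'_{\h_c}$: near the origin $d_1\approx 9/8$ and $\partial_{x_1}\phi\approx-2\hat V_1 d_1<0$. So ``reducing to the edges'' requires knowing which set the horizontal segments stay in.
\item Your local claims near the origin, near $\partial\mathcal H_{\h_c}$ and near the saddle give positivity only on unquantified neighbourhoods. You never show they cover what the monotonicity misses.
\item The saddle heuristic is also inaccurate. The cubic term you single out, $-6x_2r^3$, is \emph{negative}. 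On $x_1=0$, positivity comes from the third-order Taylor terms of $\hat V$ inside $r^2\hat V_{22}+\hat V_2^2$, which beat it $90$ to $72$ at leading order (see the identity below).
\item The leading part, $\tfrac{27}{4}x_1^2$ plus a cubic in $x_2-2^{1/4}/\sqrt3$, is of mixed degree, not a homogeneous form.
\end{itemize}

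The missing ingredient is that $d_1=d_{\h_c,1}$ is nondecreasing in $x_1$ on $\mathcal H'_{\h_c}$. This follows from \eqref{equ: c_-1 x2} via the symmetry $c_{-t}(x_2,x_1)=d_t(x_1,x_2)$. The paper uses it to replace $d_1(x)$ by $d_1(0,x_2)$. Then, with $x_j=2^{1/4}\sqrt{l_j/3}$, the only square root left is $\sqrt{3l_2(2+l_2)}\le 1+2l_2$, and a two-variable polynomial inequality remains.

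Combined with your own derivative idea, it closes your argument even more directly. Since $\hat V_1,\hat V_2,\hat V_{12}\ge0$ on the octant,
\[
\partial_{x_1}\phi=-2\hat V_1 d_1+r^2\,\partial_{x_1}d_1+2\hat V_2\hat V_{12}\ge -2\hat V_1 d_1 .
\]
So $\partial_{x_1}\phi\ge0$ on $\{d_1\le 0\}$, and this set is preserved when $x_1$ decreases, by the monotonicity of $d_1$. Hence $\phi(x)\ge\phi(0,x_2)$ whenever $d_1(x)\le0$, and $\phi\ge0$ is trivial otherwise. On the edge one has
\[
32\sqrt2\,\phi(0,x_2)=(1-l_2)^3\bigl[18(1+3l_2+l_2^2)-8(2+l_2)\sqrt{3l_2(2+l_2)}\bigr]\ge (1-l_2)^3(2+14l_2+2l_2^2)\ge 0 .
\]
This already contains the saddle $l_2=1$, so no separate local analysis is needed.
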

\begin{proof} Recall that $\partial_h d_{h,1}\leq 0$ and $\partial_hr_h\geq 0$. Hence
 $$
 \partial_h(d_{h,1}(x)+\hat V_2(x)^2/r_h(x)^2) =\partial_h d_{h,1}(x)-2\hat V_2(x)^2 \partial_h r_h/r_h(x)^3\leq 0,
 $$
 for every $x\in \mathcal H_h$ and $0<h\leq \h_c$. Hence, it is sufficient to consider $h=\h_c$.

Denote $c_{t}:=c_{\h_c,t}$,    $d_{s}:=d_{\h_c,s}$. We observe that $c_{-t}(x_2,x_1)=d_{t}(x_1,x_2)$. Then from \eqref{equ: c_-1 x2}, we see that $d_1(x)$ increases in $x_1$ and thus $d_1(x_1,x_2)\geq d_1(0,x_2)$ for $x_1\geq 0$. Hence, it is enough to prove that
$$
W_1(x):=r(x)^2d_1(0,x_2)+\hat V_2(x)^2\geq 0
$$
for every $x\in \mathcal H_{\h_c}'$. We compute
$$\begin{aligned}
&\quad\ 32\sqrt 2 W_1(2^{1/4} \sqrt{l_1/3}, 2^{1/4} \sqrt{l_2/3})\\
&=3 l_2 (3 + l_1^2 + 2 l_1 l_2 - 3 l_2^2)^2 + \left(9 (1 - 5 l_2^2) -  8 (1 - l_2) \sqrt{3 l_2 (2 + l_2)}\right) (2 - (l_1 + l_2) (3 - (l_1 - l_2)^2))\\
&\geq 3 l_2 (3 + l_1^2 + 2 l_1 l_2 - 3 l_2^2)^2 + (9 (1 - 5 l_2^2) -  8 (1 - l_2) (1 + 2 l_2) ) (2 - (l_1 + l_2) (3 - (l_1 - l_2)^2))\\
&=:W_2(l_1,l_2),
\end{aligned}$$
for every $0\leq l_1,l_2\leq 1$ so that $(2^{1/4} \sqrt{l_1/3}, 2^{1/4} \sqrt{l_2/3})\in \mathcal H'_{\mathfrak h_c}$. Here, we have used that $(1 + 2 l_2)^2 - 3 l_2(2 + l_2) = (1 - l_2)^2\geq 0$ and that
$$
8\sqrt 2(\mathfrak h_c-\hat V(2^{1/4} \sqrt{l_1/3}, 2^{1/4} \sqrt{l_2/3}))=2 - (l_1 + l_2) (3 - (l_1 - l_2)^2)\geq 0.$$
Finally, we compute
$$\begin{aligned}
W_2(l_1,l_2)&=l_1^2 l_2 (W_{21}+ 3 l_1^2) + 2 (1 + 7 l_2 + l_2^2) (1 -l_1)^3 (1 - l_2)^3 \\
&\quad\ + (3 + 48 l_2 + 20 l_2^2 + 110 l_2^3 - 31 l_2^4 - 6 l_2^5) l_1 (1 - l_1)^2,\quad \forall 0\leq l_1,l_2\leq 1.
\end{aligned}$$
where
$W_{21}(l_1, l_2) = 89 + 150 l_2 + 141 l_2^2 - 38 l_2^3 - 6 l_2^4 + l_1 (-48 - 71 l_2 - 76 l_2^2 + 23 l_2^3 + 4 l_2^4)$ satisfies $W_{21}(0,l_2)>0$ and $W_{21}(1,l_2)=41 + 79 l_2 + 65 l_2^2 - 15 l_2^3 - 2 l_2^4>0$ for every $0\leq l_2\leq 1$. Also, we observe that $3 + 48 l_2 + 20 l_2^2 + 110 l_2^3 - 31 l_2^4 - 6 l_2^5>0,\, \forall 0\leq l_2\leq 1$. Therefore, $W_2(l_1,l_2)\geq 0$ for $0\leq l_1,l_2\leq 1$ and this implies that $W_1(x)\geq 0$ for every $x\in \mathcal H_{\h_c}'$.
\end{proof}

By Lemmas \ref{lem: ct positive} and \ref{lem: r^2d+v2^2 positive}, we obtain
$$\begin{aligned}
G_h(\theta, x)&\geq c_{h,t}(x)(r_h(x)^2d_{h,1}(x)+\hat V_{2}(x)^2)+d_{h,1}(x)\hat V_{1}(x)^2-r_h(x)^2\hat V_{12}(x)^2-2\hat V_{12}(x)\hat V_{1}(x)\hat V_{2}(x)\\
&\geq c_{h,-1}(x)(r_h(x)^2d_{h,1}(x)+\hat V_{2}(x)^2)+d_{h,1}(x)\hat V_{1}(x)^2-r_h(x)^2\hat V_{12}(x)^2-2\hat V_{12}(x)\hat V_{1}(x)\hat V_{2}(x)\\
&=:\hat E_{0,h}(x),\quad \forall x\in \mathcal H_{h}',\ 0<h\leq \h_c.
\end{aligned}$$

We aim to prove that $\hat E_{0,h}(x)>0$ for every $x\in \mathcal H_{h}'$ and $0<h\leq \h_c$. Define
$$
\begin{aligned}
W(x,r) &
:=\hat V_{2}(x)^2+\frac{\tilde d_{1}(x,r)\hat V_{1}(x)^2-2\hat V_{12}(x)\hat V_{1}(x)\hat V_{2}(x)}{\tilde c_{-1}(x,r)},\\
W_{h}(x) &:=W(x,r_h(x)),
\end{aligned}
$$
for every $x\in \mathcal H_h, 0<h\leq \h_c$, where
$$
\begin{aligned}
\tilde c_{t}(x,r) &:=r t\hat f_{2,11}(x_1)+\hat V_{11}(x)=6x_1\cdot rt+\frac{9}{16}(2-45x_1^4+3x_2^4+18x_1^2x_2^2),\\
\tilde d_{s}(x,r)
& :=r s\hat f_{1,22}(x_2)+\hat V_{22}(x)=-6x_2\cdot rs+\frac{9}{16}(2+3x_1^4-45x_2^4+18x_1^2x_2^2).
\end{aligned}
$$
In particular, $c_{h,t}(x)=\tilde c_t(x,r_h(x))$ and $d_{h,s}(x)=\tilde d_s(x,r_h(x))$.


Denote
$$
\hat E_{1,h}(x):=d_{h,1}(x)-\frac{\hat V_{12}(x)^2}{c_{h,-1}(x)}+\frac{W_h(x)}{r_h(x)^2},\quad \forall x\in \mathcal H_{h}', r_h(x)>0.
$$
We see that $\hat E_{0,h}(x)=r_h^2(x)c_{h,-1}(x)\hat E_{1,h}(x)$. Moreover, we compute
$$
\begin{aligned}
\partial_h \hat E_{1,h}(x) &=\partial_h d_{h,1}(x)+\frac{\hat V_{12}(x)^2}{c_{h,-1}^2(x)}\partial_h c_{h,-1}(x)\\ &-\frac{2\partial_h r_h(x)}{r_h^3(x)}W_h(x)+\frac{1}{r_h(x)^2}\partial_h W_h(x).
\end{aligned}
$$
We need the following lemma.
\begin{lem}\label{lem: prop of Wh}
The following statements hold:
\begin{itemize}
\item[(i)] $W_h(x),-\partial_{h} W_h(x)\geq 0$ for every $x\in \mathcal H_{h}'\setminus \partial\mathcal H_{h}$, $0<h\leq \h_c$,
\item[(ii)] $W_h(x)>0$ for every $x\in \mathcal H_{h}'\cap \partial \mathcal H_{h}$, $0<h\leq \h_c$.
\end{itemize}
\end{lem}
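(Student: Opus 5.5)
The plan is to treat $W(x,r)$ as a function of the two independent variables $(x,r)$ and use that $r_h(x)$ is increasing in $h$. Since $W_h(x)=W(x,r_h(x))$, we have $\partial_h W_h(x)=\partial_r W(x,r_h(x))\,\partial_h r_h(x)$ with $\partial_h r_h\ge 0$. So (i) reduces to two facts on the octant region $\mathcal H'_{\h_c}$ for $0\le r\le r_{\h_c}(x)$: first, $\partial_r W(x,r)\le 0$; second, $W(x,r_{\h_c}(x))\ge 0$ at the critical level. Together they give $W_h(x)=W(x,r_h(x))\ge W(x,r_{\h_c}(x))\ge 0$, because $r_h\le r_{\h_c}$ and $\mathcal H_h\subset\mathcal H_{\h_c}$. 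Lemma \ref{lem: ct positive} guarantees $\tilde c_{-1}(x,r)>0$ for all $0\le r\le r_{\h_c}(x)$, since $\tilde c_{-1}$ is decreasing in $r$. Hence $W$ is smooth there.

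\textbf{Monotonicity in $r$.} Write $N(x,r)=\tilde d_1\hat V_1^2-2\hat V_{12}\hat V_1\hat V_2$, so that $W=\hat V_2^2+N/\tilde c_{-1}$. Since $\partial_r\tilde d_1=-6x_2$ and $\partial_r\tilde c_{-1}=-6x_1$, we get
$$
\partial_r W=\frac{-6x_2\hat V_1^2\,\tilde c_{-1}+6x_1 N}{\tilde c_{-1}^2}=\frac{6\hat V_1\big(x_1\hat V_1\hat V_{22}-x_2\hat V_1\hat V_{11}-2x_1\hat V_{12}\hat V_2\big)}{\tilde c_{-1}^2}.
$$
The $r$-terms cancel in the numerator: $-6x_2\hat V_1^2(-6x_1r)+6x_1\hat V_1^2(-6x_2r)=0$. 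So the sign is that of an explicit $r$-independent polynomial $P(x)$ multiplied by $\hat V_1\ge0$. I would show $P\le0$ on the octant $0\le x_1\le x_2$ inside $\mathcal H_{\h_c}$. As in Lemma \ref{lem: r^2d+v2^2 positive}, I would substitute $x_j=2^{1/4}\sqrt{l_j/3}$ and expand the resulting polynomial in $l_1,l_2$ as a combination of monomials in $l_1,\,l_2-l_1,\,1-l_2$ (or similar Bernstein-type factors) with nonnegative coefficients.

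\textbf{Sign at the critical level and on the boundary.} I would compute $W(x,r_{\h_c}(x))$ and prove it is nonnegative. The only irrational quantity is $r_{\h_c}$, and it enters linearly through $\tilde c_{-1}$ and $\tilde d_1$. I would replace it by a polynomial bound, as done for $\sqrt{3l_2(2+l_2)}\le 1+2l_2$ in Lemma \ref{lem: r^2d+v2^2 positive}, choosing the direction allowed by the sign of $\partial_r W$, so that the lower bound is an upper bound on $r$. Then I would multiply by $\tilde c_{-1}>0$ and prove positivity of the resulting polynomial in $(l_1,l_2)$ by a similar decomposition. For (ii), on $\partial\mathcal H_h$ we have $r=0$. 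So
$$
W_h=\hat V_2^2+\frac{\hat V_{22}\hat V_1^2-2\hat V_{12}\hat V_1\hat V_2}{\hat V_{11}}=\frac{\hat V_{11}\hat V_2^2+\hat V_{22}\hat V_1^2-2\hat V_{12}\hat V_1\hat V_2}{\hat V_{11}},
$$
whose numerator is the classical curvature of the level curve $\hat V=h$. Its positivity is equivalent to strict convexity of the Hill region boundary in the octant away from the singular point. I would verify this by checking that $\hat V_{11}>0$ and the numerator is positive along the graph $x_2=x_2(x_1)$, using the same substitution. Alternatively it follows from the $r\to0$ limit of the interior estimate, provided strictness can be kept there.

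\textbf{Main obstacle.} I expect the hardest step to be the inequality at the critical level $h=\h_c$, close to the singular point $(2^{1/4}/3,2^{1/4}/3)$, i.e.\ the image of $\Pi_x(S)$ in the octant. There $\hat V_1,\hat V_2,r\to0$ and the quantities degenerate, so the crude polynomial replacement of $r$ may lose positivity. If it does, I would split the region into a neighborhood of the singular point, handled by a Taylor expansion using the saddle-center structure, and its complement, handled by the polynomial certificates.
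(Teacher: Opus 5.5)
Your reduction is correct and is the same as the paper's. You use $\partial_hW_h=\partial_rW(x,r_h)\,\partial_hr_h$ with $\partial_hr_h\ge0$, the positivity of $\tilde c_{-1}$ for $0\le r\le r_{\h_c}(x)$, and the cancellation of the $r$-terms, which gives $\partial_rW=6\hat V_1P/\tilde c_{-1}^2$. In fact $P=-\frac{81}{256}x_1W_3$, where $W_3$ is exactly the polynomial the paper proves nonnegative. One correction to how you execute this: $P$ is not a polynomial in $l_j=3x_j^2/\sqrt2$. The term $x_2\hat V_1\hat V_{11}$ is odd in both $x_1$ and $x_2$, while the other two terms are even in both, so your substitution produces a factor $\sqrt{l_1l_2}$. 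The same problem affects $W$: with $A:=\hat V_{11}\hat V_2^2+\hat V_{22}\hat V_1^2-2\hat V_{12}\hat V_1\hat V_2$, one has $\tilde c_{-1}W=A-6r(x_1\hat V_2^2+x_2\hat V_1^2)$, and $r$ enters through two different irrationalities, $r\sqrt{l_1}$ and $r\sqrt{l_2}$. The paper instead uses the ratio substitution $x=\frac{2^{1/4}}{\sqrt3}(cs_2,s_2)$, in which $W_3/s_2$ is a polynomial in $c$ and $s_2^4$, and concludes by a sign/concavity argument in $s_2^4$.

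The genuine gap is the critical-level inequality $W_{\h_c}\ge0$, strict on $\mathcal H'_{\h_c}\setminus\{0\}$, on which both (i) and (ii) rest. You only announce polynomial certificates for it, and you put the obstruction in the wrong place. The point $(2^{1/4}/3,2^{1/4}/3)$ is a regular boundary point, since $\hat V_1\neq0$ there. The point of $\Pi_x(S)$ in the octant is $x_0=(0,2^{1/4}/\sqrt3)$. At $x_0$ one has $W_{\h_c}(x_0)=0$, and with $\xi=x_1$, $\eta=x_2-2^{1/4}/\sqrt3$ one finds $W_{\h_c}=\frac{27}{4}(3\eta^2-\xi^2)+O(|x-x_0|^3)$. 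This quadratic part is $9$ times the quadratic part of $\h_c-\hat V$. Hence along $\partial\mathcal H_{\h_c}$ near $x_0$, $W_{\h_c}$ is only cubic, and its sign is governed by the curvature of the separatrix branch. Your alternative route to (ii), checking $A>0$ along the level curves $\hat V=h$, runs into the same degeneracy for $h$ near $\h_c$.

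A polynomial upper bound $\bar r\ge r_{\h_c}$ cannot vanish at regular boundary points, because there $r_{\h_c}$ grows like the square root of the distance to the boundary. So the term $-6\bar r(x_1\hat V_2^2+x_2\hat V_1^2)$ is at best of the same cubic order near $x_0$. Nothing in your plan decides this competition, and your proposed Taylor analysis is aimed at the wrong point.

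The missing idea is the paper's saddle-adapted coordinates $(c,l_2)$, given by $x=(u-v,u+v)$ and $(u,v)=\frac{2^{1/4}}{2\sqrt3}(\sqrt{1+c(l_2-1)},\sqrt{l_2})$. In these coordinates:
\begin{itemize}
\item the point $x_0$ blows up to the segment $l_2=1$;
\item $\h_c-\hat V$ becomes a multiple of $(1-l_2)^2(1+3c+cl_2+c^2l_2)$;
\item $W_{\h_c}$ carries the factor $(1-l_2)^2$;
\item in $\hat W_{\h_c,1}$ the remaining square root appears multiplied by an extra $(1-l_2)$ and by the sign-definite $W_c\le0$.
\end{itemize}
This is why crude sign-respecting bounds $\sqrt{6(\cdots)}\le\frac52(\cdots)$, different for $c\le0$ and $c\ge0$, are enough. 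The boundary near the corner $c=-2+\sqrt3$ is then handled by factoring out the vanishing factor $1+4c+c^2$, which is the role of the polynomial $W^b_{\h_c,2}$.
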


For now, we skip the proof of Lemma \ref{lem: prop of Wh}. Recall that $\partial_h r_h(x),-\partial_h d_{h,1}(x),-\partial_h c_{h,-1}(x)\geq 0$ for every $x\in \mathcal H_{h}'$, $0<h\leq \h_c$.
By Lemma~\ref{lem: prop of Wh}-(i), we have
$W_h(x)\geq W_{\h_c}(x)$ for every $x\in \mathcal H_{h}'\subset \mathcal H_{\h_c}'$. Hence $\hat E_{1,h}(x)\geq \hat E_{1,\h_c}(x)$ for every
$x\in \mathcal H_{h}'\setminus \partial \mathcal H_{h}\subset \mathcal H_{\h_c}'\setminus \partial \mathcal H_{\h_c}$. Using Lemma \ref{lem: ct positive}, we conclude that $\hat E_{0,h}(x)>0, \forall x\in \mathcal H_{h}'\setminus\partial\mathcal H_{h}$ if $\hat E_{0,\h_c}(x)>0, \forall x\in \mathcal H_{\h_c}'\setminus\partial \mathcal  H_{\h_c}$. Moreover, for every $x\in \mathcal H_{h}'\cap \partial\mathcal H_{h}$, we have
$$
\hat E_{0,h}(x)=c_{h,-1}(x,0)\hat V_2(x)^2+d_{h,1}(x,0)\hat V_1(x)^2-2\hat V_{12}(x)^2\hat V_1(x)\hat V_2(x)=c_{h,-1}(x,0)W_h(x).
$$
By Lemmas \ref{lem: ct positive} and \ref{lem: prop of Wh}-(ii), we obtain $\hat E_{0,h}(x)>0$ for every $x\in\mathcal H_{h}'\cap \partial\mathcal H_{h}$.
It remains to prove the following lemma.

\begin{lem}\label{lem: hat E 0,hc is positive}
 $\hat E_{0,\h_c}(x)>0$ for every $x\in \mathcal H_{\h_c}'$.
\end{lem}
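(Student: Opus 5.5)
We need to prove $\hat E_{0,\h_c}(x)>0$ on the closed octant region $\mathcal H'_{\h_c}$ (with the singular points removed). Write $r=r_{\h_c}$, $c=c_{\h_c,-1}$, $d=d_{\h_c,1}$. Lemma \ref{lem: ct positive} gives $c>0$ throughout. Then
$$\hat E_{0,\h_c}=c\,(r^2d+\hat V_2^2)+d\hat V_1^2-r^2\hat V_{12}^2-2\hat V_{12}\hat V_1\hat V_2 .$$
Two of the ingredients are already controlled. On the boundary $\partial\mathcal H_{\h_c}$ we have $r=0$, and positivity reduces to $W_{\h_c}>0$, which is Lemma \ref{lem: prop of Wh}(ii). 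In the interior, Lemma \ref{lem: r^2d+v2^2 positive} gives $r^2d+\hat V_2^2\ge0$. So the real work is to handle the remaining terms $d\hat V_1^2-r^2\hat V_{12}^2-2\hat V_{12}\hat V_1\hat V_2$ against the first product.

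\textbf{Substitution.} I would use the same substitution as in Lemma \ref{lem: r^2d+v2^2 positive}, namely $x=(2^{1/4}\sqrt{l_1/3},\,2^{1/4}\sqrt{l_2/3})$ with $0\le l_1\le l_2\le 1$. In these variables
$$8\sqrt2\,r^2/ \text{const}=2-(l_1+l_2)(3-(l_1-l_2)^2)=:\rho\ge0.$$
With this normalisation, $\hat V_1$, $\hat V_2$, $\hat V_{11}$, $\hat V_{22}$ and $\hat V_{12}$ all become polynomials in $\sqrt{l_i}$ times explicit factors. The only irrational contributions are the magnetic terms $-6x_1r$ in $c$ and $-6x_2r$ in $d$, which are proportional to $\sqrt{l_1\rho}$ and $\sqrt{l_2\rho}$ respectively.

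\textbf{Bounding the magnetic terms.} The magnetic terms enter $\hat E_{0,\h_c}$ linearly and bilinearly with negative sign. I would bound them below by rational majorants, as in Lemma \ref{lem: ct positive} and Lemma \ref{lem: r^2d+v2^2 positive}: replace $\sqrt{3l_2(2+l_2)}$ by $1+2l_2$, and apply an analogous AM-GM bound $\sqrt{ab}\le(\lambda a+b/\lambda)/2$ to $\sqrt{l_i\rho}$, choosing $\lambda$ so the bound is sharp near the singular corner $l_1=0$, $l_2=1$ where curvature degenerates. Where $c$ multiplies a nonnegative quantity, I would monotonically replace it by a smaller rational expression, using $c\ge c(x_1,x_1)$ type bounds from \eqref{equ: c_-1 x2}. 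This yields a polynomial lower bound $P(l_1,l_2)$ for $\hat E_{0,\h_c}$, times positive factors.

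\textbf{Positivity of the polynomial.} To show $P>0$ on the region $\{0\le l_1\le l_2\le1,\ \rho\ge0\}$, I would follow the pattern of the decomposition of $W_2$. I would expand $P$ in the Bernstein-like basis $l_1^a(1-l_1)^b l_2^c(1-l_2)^e$, together with powers of $\rho$ (which is nonnegative on the region), and check that the coefficients are nonnegative and not all zero. Where a coefficient is a one-variable polynomial of indefinite appearance, I would verify its positivity on $[0,1]$ separately, by evaluating at the endpoints and checking that it is linear or convex in the other variable, as was done for $W_{21}$.

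\textbf{Main obstacle.} The hardest part is the neighbourhood of the singular points $\Pi_x(S)$, i.e.\ the corner $l_1=0$, $l_2=1$, and the point $l_1=l_2=1/2\cdot$(boundary), where $\rho=0$ and $\hat V_1$, $\hat V_2$ or $\hat V_{12}$ vanish, so $\hat E_{0,\h_c}\to0$. There the crude square-root bounds could destroy positivity. My first attempt would be to factor out the vanishing orders, e.g.\ powers of $l_1$ and of $(1-l_2)$, before applying the bounds, and to split the region into a small neighbourhood of the corner and its complement. On the neighbourhood I would use a Taylor expansion at the corner to check the leading term is positive, since the saddle-center structure should give positive conical curvature. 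On the complement I would use the polynomial certificate, falling back on an interval-arithmetic grid check if no clean decomposition appears.
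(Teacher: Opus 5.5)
Your framework is the paper's: majorize each square root where its coefficient has a sign, then certify the resulting polynomial by a Bernstein-type decomposition. However, the proposal has a genuine gap at the one delicate point, and what you suggest there fails as stated.

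\textbf{The corner $x^*$.} In the closed octant the only degenerate point is the corner $x^*=(0,2^{1/4}/\sqrt3)$. The diagonal boundary point you list is not degenerate, since there $r=0$ and $\hat E_{0,\h_c}=c_{\h_c,-1}W_{\h_c}>0$. With $\xi=x-x^*$ one has $\hat V_1\approx\frac32\xi_1$, $\hat V_2\approx-\frac92\xi_2$ and $r^2\approx\frac92\xi_2^2-\frac32\xi_1^2$. The degree-two part of $\hat E_{0,\h_c}$ then cancels identically. So $\hat E_{0,\h_c}$ vanishes to third order, and its cubic part contains odd powers of $r$, which is the square root of an indefinite form on the wedge $0\le\xi_1\le-\sqrt3\,\xi_2$. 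This has four consequences:
\begin{itemize}
\item $\hat E_{0,\h_c}$ is not $C^1$ at $x^*$, so there is no Taylor expansion at the corner to use.
\item In your variables the wedge becomes the cusp $l_1\lesssim\frac34(1-l_2)^2$. A Bernstein certificate on $[0,1]^2$ does not see this shape.
\item A fixed-$\lambda$ AM--GM bound for $\sqrt{l_2\rho}$ has an error that does not tend to $0$ as $(l_1,l_2)\to(0,1)$. It multiplies coefficients of order $|\xi|^2$, such as $6\hat V_1^2$ in $-6rx_2\hat V_1^2$. So it destroys positivity on a whole neighbourhood of $x^*$. Patching that neighbourhood needs explicit remainder bounds on a region containing this failure set, and interval arithmetic cannot certify positivity near a zero.
\item The sign of the leading cubic term does not follow from the saddle-center structure. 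It depends on higher-order terms, and this is exactly where the unsheared $K$ fails to be convex. It must be computed.
\end{itemize}

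\textbf{The missing idea: a blow-up of $x^*$ compatible with the symmetry.} The paper sets $x=(u-v,u+v)$ with $u=\frac{2^{1/4}}{2\sqrt3}\sqrt{1+c(l_2-1)}$ and $v=\frac{2^{1/4}}{2\sqrt3}\sqrt{l_2}$. Then $l_2=1$ is the exceptional segment over $x^*$, and $c$ parametrizes the rays. In these coordinates three things happen.
\begin{itemize}
\item $\h_c-\hat V$ is a positive constant times $(1-l_2)^2(1+3c+cl_2+c^2l_2)$.
\item The dihedral symmetry forces every odd-in-$r$ term to be $r(x_1+x_2)$ times a polynomial in $(c,l_2)$. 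So a single square root survives: $12r(x_1+x_2)=(1-l_2)\sqrt{6(1+c(l_2-1))(1+3c+cl_2+c^2l_2)}$.
\item One gets $\frac{256\sqrt2}{3}\hat E_{0,\h_c}=(1-l_2)^3\hat E_{c,1}$, where $\hat E_{c,1}=E_a+E_b\sqrt{\cdots}$ and $E_b\le0$.
\end{itemize}
With the vanishing order extracted exactly, it suffices to show $\hat E_{c,1}>0$ on the closed rectangle, including $l_2=1$. The segment $l_2=1$ encodes positivity of the cubic leading term in every direction. There a crude uniform majorant of the square root already suffices: $\sqrt6<5/2$ times a quadratic in $c$, one for $c\le0$ and one for $c\ge0$. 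Explicit positive decompositions then finish the proof, with no splitting of the region and no numerics.
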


By Lemma \ref{lem: hat E 0,hc is positive}, we conclude that $\hat E_{0,h}(x)>0$ for every $x\in \mathcal H_{h}'$,  $0<h\leq \h_c$. Up to proving Lemmas \ref{lem: prop of Wh} and \ref{lem: hat E 0,hc is positive}, the proof of Proposition \ref{prop_strict_convexity} is complete.

\begin{proof}[Proof of Lemma \ref{lem: prop of Wh}]
We first prove that $-\partial_h W_h(x)\geq 0$ for every $x\in \mathcal H_{h}'\setminus \partial \mathcal H_{h}$, $0<h\leq \h_c$. Recall that $W_h(x)=W(x,r_h(x))$. We compute
$$
- \partial_r W(x,r_h(x))=\frac{243}{128}\cdot \frac{x_1\hat V_1(x)}{c_{h,-1}^2(x)}W_{3}(x),
$$
where
$$\begin{aligned}
W_{3}(x):&=27 x_1^9 + 405 x_1^8 x_2 + 216 x_1^7 x_2^2 - 432 x_1^6 x_2^3 + 144 x_1^3 x_2^6 - 54 x_1^4 x_2 (2 + x_2^4)\\
& + 24 x_1^2 x_2^3 (2 + 3 x_2^4) - 6 x_1^5 (-2 + 51 x_2^4) + x_1 (-4 + 132 x_2^4 - 81 x_2^8) + x_2 (2 + 3 x_2^4)^2.
\end{aligned}$$

Let us define
$$
\begin{aligned}
W_{31}(c,s_2):= & 9 (1 - c) + 6 (1 + 11 c + 4 c^2 - 9 c^4 + c^5) s_2^4 \\ &+ (1 + c)^2 (1 - c) (1 - 10 c + 19 c^2 - 12 c^3 + 15 c^4 - 42 c^5 - 3 c^6) s_2^8.
\end{aligned}
$$
Observe that
$$
s_2 W_{31}(c,s_2)= \frac{9 \sqrt 3}{4\cdot 2^{1/4}} W_{3}\left(\frac{2^{1/4}}{\sqrt 3} cs_2, \frac{2^{1/4}}{\sqrt 3} s_2\right),\quad 0\leq c,s_2\leq 1.
$$

If $1 - 10 c + 19 c^2 - 12 c^3 + 15 c^4 - 42 c^5 - 3 c^6\geq 0$, for some $0\leq c\leq 1$, then $W_{31}(c,s_2)\geq 0$. If $1 - 10 c + 19 c^2 - 12 c^3 + 15 c^4 - 42 c^5 - 3 c^6< 0$, for some $0< c\leq 1,$ then $W_{31}(c,s_2)$ is a concave function of $s_2^4$. Since $W_{31}(c,0)=9(1-c)\geq 0$ and $W_{31}(c,1)=16 + 4 (12 + 20 c + 24 c^2 + 9 c^3 + 2 c^4 - 10 c^5 - 4 c^6) (1 - c) c + 29 c^8 + 3 c^9>0$ for every $0\leq c\leq 1$, we conclude that $W_{31}(c,s_2)\geq 0$ for every $0\leq c,s_2\leq 1$. This implies that $-\partial_r W(x,r_h(x))\geq 0$ for every $x\in \mathcal H_{h}'$, $0<h\leq \h_c$. This follows from the fact that
$$
\left\{\hat V\left(\frac{2^{1/4}}{\sqrt 3} cs_2, \frac{2^{1/4}}{\sqrt 3} s_2\right)\leq \h_c, s_2\geq cs_2\geq 0\right\}\subset \{0\leq c,s_2\leq 1\}.
$$
In particular, $-\partial_h W_h(x)\geq 0$ for every $x\in \mathcal H_{h}'\setminus \partial \mathcal H_{h}$, $0<h\leq \h_c$.

Now we prove that $W_h(x)\geq 0$ for every $x\in \mathcal H_{h}'\setminus \partial \mathcal H_{h},\, 0<h\leq \h_c$, and that $W_h(x)>0$ for every $x\in  \mathcal H_{h}'\cap \partial \mathcal H_h$, $0<h\leq \h_c$.

From the estimate $\partial_h W_h \leq 0$ proved above, we have $W_h(x)=W(x,r_h(x))\geq W(x,r_{\h_c}(x))=W_{\h_c}(x)$ for every $x\in \mathcal H_{h}'\subset \mathcal H_{\h_c}'$, $0<h\leq \h_c$. It is thus sufficient to prove that $W_{\h_c}(x)>0$ for every $x\in \mathcal H_{\h_c}'\setminus \{0\}$. One readily checks that $W_{\h_c}(0,2^{1/4}/\sqrt{3})=0$ and $W_h(0)=0$ for every $h$. Let $\hat W_{\h_c}(u,v):=W_{\h_c}(u-v,u+v)$. Then
$$
\{(u-v,u+v)\in \mathcal H_{\h_c}'\}\subset \left\{0\leq u\leq \frac{2^{1/4}}{3},0\leq v\leq \frac{2^{1/4}}{2\sqrt 3}\right\}.
$$
Choose new coordinates $(c,l_2)\in [-1/3,1]\times [0,1]$ so that
$$
(u,v)=\left(\frac{2^{1/4}}{2\sqrt 3}\sqrt{l_1},\frac{2^{1/4}}{2\sqrt 3}\sqrt{l_2}\right)=\left(\frac{2^{1/4}}{2\sqrt 3}\sqrt{1+c(l_2-1)},\frac{2^{1/4}}{2\sqrt 3}\sqrt{l_2}\right).
$$
We compute $\h_c-\hat V(u-v,u+v)= (1-l_2)^2 (1 + 3 c + c l_2 + c^2 l_2)$. Hence, $\mathcal H_{\h_c}'$ is contained in $1+3c+cl_2+c^2l_2 \geq 0$ and the boundary $\mathcal H_{\h_c}'\cap \partial \mathcal H_{\h_c}$ is given by
$$
\begin{aligned}
&\left\{(c,l_2)| l_2=\bar l_2(c):=-\frac{1 + 3 c}{c (1 + c)},\ -1/3\leq c\leq -2+\sqrt{3}\right\}\quad \text{or}\\
&\left\{(c,l_2)| c=\underline c(l_2):=\frac{1}{2 l_2}\left(-3 - l_2 + \sqrt{9 + 2 l_2 + l_2^2}\right),\ 0\leq l_2\leq 1\right\},
\end{aligned}
$$
see Figure \ref{fig: Lunar_Hill_region_cors}.
\begin{figure}[hbpt]
    \centering
    \begin{minipage}[t]{0.3\textwidth}
    \centering
    \includegraphics[width=\textwidth]{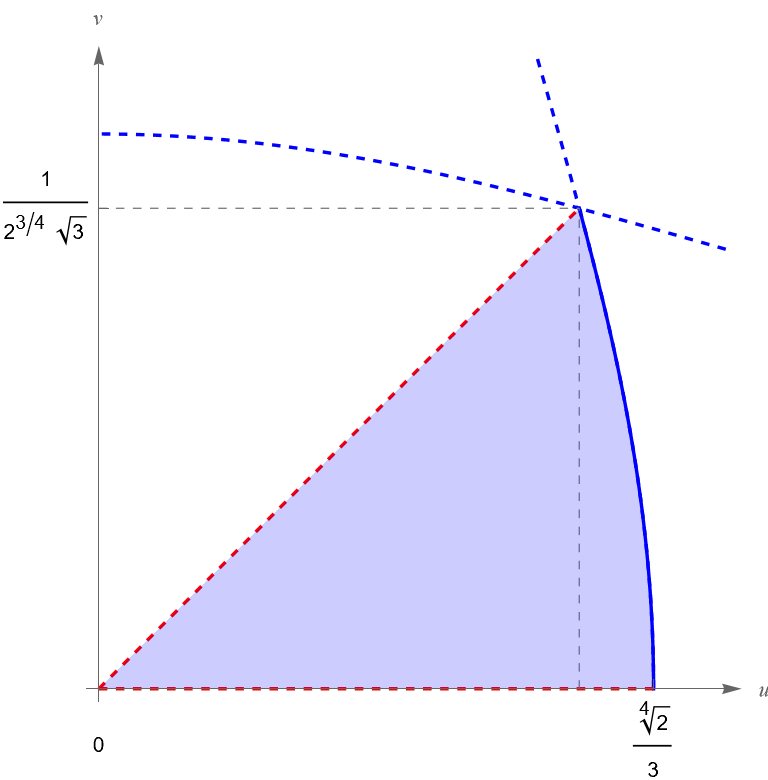}
    \end{minipage}
    \quad
    \begin{minipage}[t]{0.3\textwidth}
    \centering
    \includegraphics[width=\textwidth]{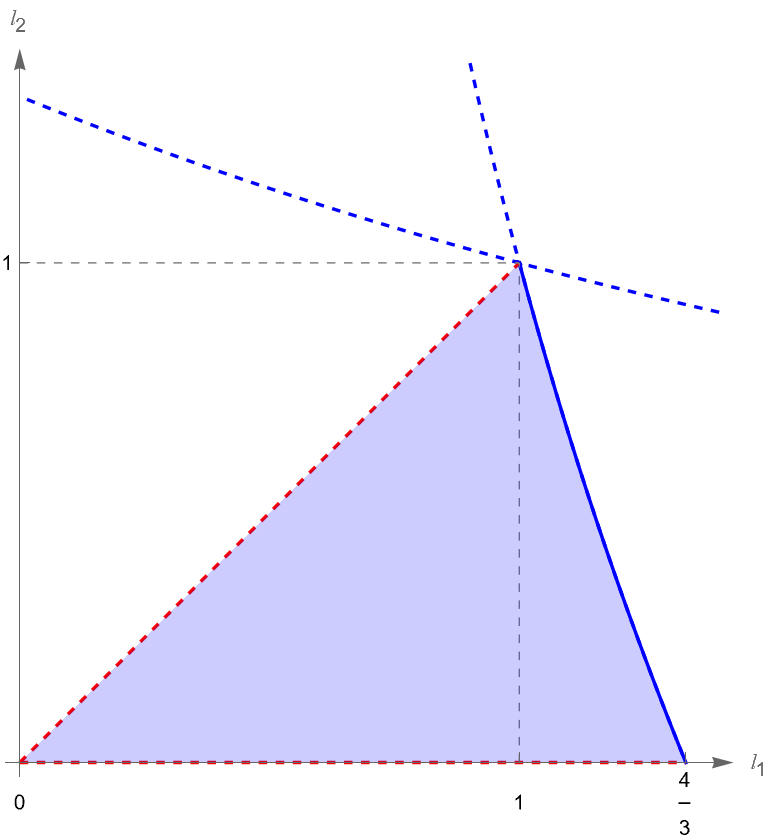}
    \end{minipage}
    \quad
    \begin{minipage}[t]{0.33\textwidth}
    \centering
    \includegraphics[width=\textwidth]{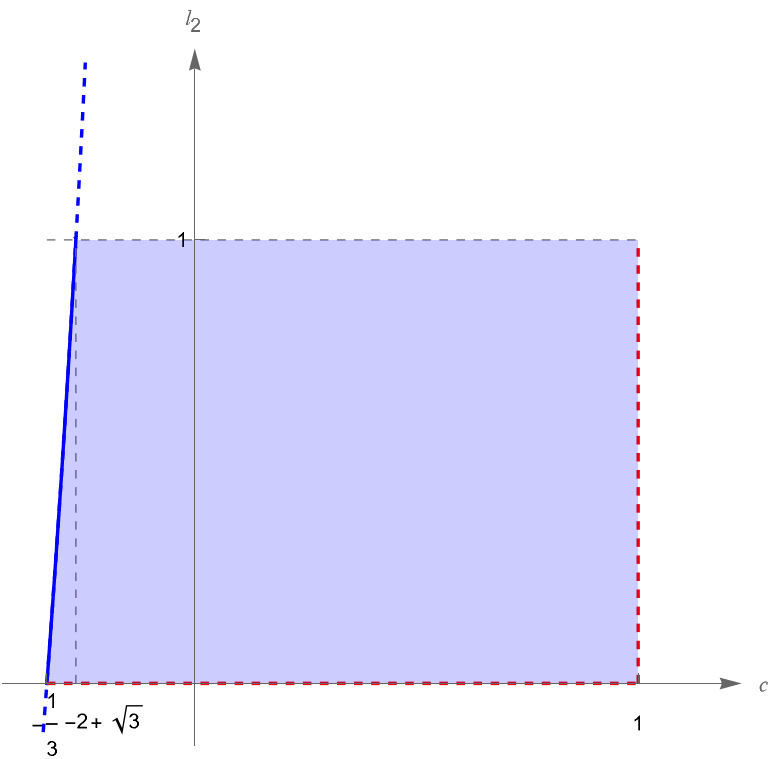}
    \end{minipage}
    \caption{The region $\mathcal H_{\h_c}'$ in different coordinates}
    \label{fig: Lunar_Hill_region_cors}
\end{figure}

In coordinates $(c,l_2)$, we have
\begin{equation}\label{equ: hat W_hc}
\begin{aligned}
\frac{512\sqrt{2}}{3}\hat W_{\h_c}\left(\frac{2^{1/4}}{2\sqrt 3}\sqrt{1+c(l_2-1)},\frac{2^{1/4}}{2\sqrt 3}\sqrt{l_2}\right)=(1-l_2)^2(W_a(c,l_2) (1-l_2)^3+l_2\hat W_{\h_c,1}(c,l_2)),
\end{aligned}
\end{equation}
where
$$
\begin{aligned}
\hat W_{h_c,1}(c,l_2)&:=W_b(c,l_2)+2 (1 - c) (1 - l_2) W_c(c,l_2)\cdot \sqrt{6 (1 + c (l_2-1)) (1 + 3 c + c l_2 + c^2 l_2)},\\
W_a(c,l_2)&:=9 (1 - c) \Big[6 + 6 c - 3 c^2 (1 - l_2) - 2 \sqrt{6 (1 + c (l_2-1)) (1 + 3 c + c l_2 + c^2 l_2)}\Big],\\
W_b(c,l_2)&:=144 (1 + 4 c + c^2) + 9 (1 - l_2) \big[8 - 28 c - 76 c^2 + 21 c^3 - 3 c^4 - (2 c^2 + c^3 + c^4 + 2 c^5) l_2^3\\
&\quad\ - (8 + 8 c + 2 c^2 + 35 c^3 + 13 c^4 + 2 c^5) l_2 - (4 c + 8 c^2 + c^3 + 9 c^4 - 4 c^5) l_2^2\big],\\
W_c(c,l_2)&: =-4 (1 - c) - (1 - l_2) (12 + (13 + 3 l_2) c + (3 + 5 l_2 - (1 - l_2) c) c^2).
\end{aligned}$$

Notice that $12 r_h(x) (x_1 + x_2)=(1 - l_2)\sqrt{6 (1 + c (l_2-1)) (1 + 3 c + c l_2 + c^2 l_2)}$.
It is straightforward to check that $W_c(c,l_2)\leq 0$ for every $(c,l_2)\in[-1/3,1]\times [0,1]$.

For the function $W_a$, we take the square of the corresponding non-negative terms and obtain
$$\begin{aligned}
&\quad\ (6 + 6 c - 3 c^2 (1 - l_2))^2 - 24 (1 + c (l_2-1)) (1 + 3 c + c l_2 + c^2 l_2)\\
&=3 (4 + 8 c + 24 c^2 - 12 c^3 + 3 c^4) -
 6 c (8 + 6 c - 10 c^2 + 3 c^3) l_2 - 3 c^2 (8 + 8 c - 3 c^2) l_2^2=:f_a(c,l_2).
\end{aligned}$$
Notice that $f_a(c,l_2)$ is concave in $l_2$ for every $-1/3\leq c\leq 1$.  Since $f_a(c,0)>0$ and $f_a(c,1)=12(1-c)^2\geq 0$ for every $-1/3\leq c\leq 1$, we conclude that $W_a(c,l_2)\geq 0$ for every $(c,l_2)\in[-1/3,1]\times [0,1]$.

Then, to prove that $\hat W_{\h_c}(x)>0$ for every $x\in \mathcal H_{\h_c}'\setminus \{0\}$, it suffices to prove that $\hat W_{\h_c,1}(c,l_2)>0$ for every $\underline c(l_2)\leq c\leq 1$ and $0\leq l_2<1$. Notice that the point $(0,2^{1/4}/\sqrt{3})\in \Pi_x(S)$ blows up to the line segment $[-2+\sqrt{3},1]\times \{l_2=1\}$. We split the proof into two steps:

\textbf{Step 1}: We claim that $\hat W_{\h_c,1}(c,l_2)>0$ for every $\underline c(l_2)\leq c\leq 0$ and $0\leq l_2< 1$. Using that $W_c \leq 0$ and that $\sqrt{6 (1 + c (l_2-1)) (1 + 3 c + c l_2 + c^2 l_2)}\leq (5/2) (1 + (3/4 + l_2) c - 3 c^2)$, we obtain
$$
\hat W_{\h_c,2}(c,l_2):=W_b(c,l_2) +
 2 (1 - c) (1 - l_2) W_c(c,l_2)\cdot (5/2) (1 + (3/4 + l_2) c - 3 c^2)\leq \hat W_{\h_c,1}(c,l_2).
$$
We have used that $\sqrt{6}<5/2$ and that
\begin{equation}\label{equ: enlarge the sqare root 1}
\begin{aligned}
&\quad\ 6 (1 + (3/4 + l_2) c - 3 c^2)^2-6 (1 + c (l_2-1)) (1 + 3 c + c l_2 + c^2 l_2)\\
&=(-c) \big[3 + (9/8) c (13 + 8 l_2) + 3 c^2 (9 + 10 l_2 + 2 l_2^2) - 54 c^3\big]\geq 0,\quad \forall -1/3\leq c\leq 0.
\end{aligned}
\end{equation}
Indeed, notice that $3 + (9/8) c (13 + 8 l_2) + 3 c^2 (9 + 10 l_2 + 2 l_2^2)$ attains the minimum at $c=-\frac{3 (13 + 8 l_2)}{16 (9 + 10 l_2 + 2 l_2^2)}$ with minimal value $\frac{3 (783 + 688 l_2 - 64 l_2^2)}{256 (9 + 10 l_2 + 2 l_2^2)}>0$ for every $0\leq l_2\leq 1$.

Now we check the positivity of $\hat W_{\h_c,2}$. We compute its partial derivative with respect to $c$
$$
\begin{aligned}
108\partial_c\hat W_{\h_c,2}(c,l_2)&=
  (32292 + 17550 c_1 + 4554 c_1^2 + 1712 c_1^3 - 425 c_1^4 - 40 c_1^5) (1 - l_2)^4\\
&+ (1 - l_2) l_2 \big[4 (65610 + 14778 c_1 - 3123 c_1^2 - 808 c_1^3 - 200 c_1^4) (1 - l_2)l_2\\
&+ \frac{1}{3} (439344 + 176634 c_1 + 8586 c_1^2 - 288 c_1^3 - 4135 c_1^4 - 120 c_1^5) (1 -l_2)^2\\
&+ 12 (17541 + 636 c_1 - 897 c_1^2 - 80 c_1^3) l_2^2\big] + 10368 (6 - c_1) l_2^4\\ &>0,\quad \forall\ c_1:=-3c,l_2\in[0,1].
\end{aligned}
$$
We further compute along the boundary $\mathcal H_{\h_c}'\cap \partial \mathcal H_{\h_c}$ that
$$
\begin{aligned}
&\quad\ W^{b}_{\h_c,2}(c):=\frac{4 c^2 (1 + c)^4}{1 + 4 c + c^2}\hat W_{\h_c,2}(c,\bar l_2(c))\\
&=216 + 1629 c + 3507 c^2 + 1608 c^3 - 681 c^4 - 1554 c^5 - 1799 c^6 + 208 c^7 + 553 c^8 + 285 c^9 + 60 c^{10}.
\end{aligned}
$$
Since $(W^{b}_{\h_c,2})''(c)=(2/729) (2556603 - 1172232 c_1 - 330966 c_1^2 + 419580 c_1^3 - 242865 c_1^4 - 13104 c_1^5 + 15484 c_1^6 - 3420 c_1^7 + 300 c_1^8)>0$ for every $c_1=-3c\in [0,1]$,  $W^{b}_{\h_c,2}(-1/3)=-27424/19683<0$ and $W^{b}_{\h_c,2}(-2+\sqrt{3})=48 (64867 - 37451 \sqrt 3)<0$, we conclude that $W^{b}_{\h_c,2}(c)<0$ for every $c\in [-1/3,-2+\sqrt{3}]$. Therefore, $\hat W_{\h_c,2}(c,\bar l_2(c))>0$ for every $c\in[-1/3,-2+\sqrt{3})$, since $1+4c+c^2$ is negative for every $c\in[-1/3,-2+\sqrt{3})$ and vanishes at $c=-2+\sqrt{3}$. Finally, we conclude that $\hat W_{\h_c,2}(c,l_2)\geq \hat W_{\h_c,2}(\underline c(l_2),l_2)= \hat W_{\h_c,2}(c,\bar l_2(c))>0$ for every $\underline c(l_2)\leq c\leq 0$ and $0\leq l_2<1$. In particular, we have $\bar l_2(-2+\sqrt{3})=1$ and $\hat W_{\h_c,2}(-2+\sqrt{3},1)=0$. Hence, the first claim holds.

\textbf{Step 2}: We claim that $\hat W_{\h_c,1}(c,l_2)>0$ for every $(c,l_2)\in [0,1]\times [0,1]$. As in Step $1$, we define
$$
\hat W_{\h_c,3}(c,l_2):=W_b(c,l_2) + 2 (1 - c) (1 - l_2) W_c(c,l_2)\cdot (5/2) (1 + (1 + l_2) c + (-4/3 + l_2) c^2)\leq \hat W_{\h_c,1}(c,l_2).
$$
The last inequality follows from $W_c(c,l_2)\leq 0$ on $[-1/3,1]\times [0,1]$, $\sqrt{6}<5/2$ and
\begin{equation}\label{equ: enlarge the square root 2}
\begin{aligned}
&\quad\ 6 (1 + (1 + l_2) c + (-4/3 + l_2) c^2)^2 - 6 (1 + c(l_2-1)) (1 + 3 c + c l_2 + c^2 l_2)\\
&= c^2 \big[(2/3)(12 + 9 l_2)(1 - c)^2 + (4/3) (2 - 6 l_2 + 9 l_2^2) c^2 + 2 l_2 (7 + 3 l_2) c (1 - c)\big]\\
&\geq 0,\ \ \forall 0\leq c,l_2\leq 1.
\end{aligned}
\end{equation}
We write $\hat W_{\h_c,3}$ as
$$\begin{aligned}
3\hat W_{\h_c,3}(c,l_2)&= (408 + 837 c - 1105 c^2 + 577 c^3 - 156 c^4 - 95 c^5 + 20 c^6) (1 - l_2)^4\\
& + (1 - l_2) l_2 \big[8 (306 + 900 c - 411 c^2 - 47 c^3 + 67 c^4 - 5 c^5) (1 - l_2) l_2 \\
&+ (1620 + 3933 c - 3522 c^2 + 841 c^3 + 303 c^4 - 264 c^5 + 5 c^6) (1 - l_2)^2 \\
&+ 4 (417 + 1458 c - 112 c^2 - 148 c^3 + 5 c^4) l_2^2 \big] + 432 (1 + 4 c + c^2) l_2^4\\
&>0,\quad \forall 0\leq c,l_2\leq 1.
\end{aligned}$$
We conclude that $\hat W_{\h_c,1}(c,l_2)\geq \hat W_{\h_c,3}(c,l_2)>0$ for every $(c,l_2)\in [0,1]\times [0,1]$, proving the claim.

Finally, using \eqref{equ: hat W_hc}, we conclude that $W_{\h_c}(x)>0$ on $\mathcal H_{\h_c}'\setminus\{0\}$. Lemma \ref{lem: prop of Wh} follows.
\end{proof}

\begin{proof}[Proof of Lemma \ref{lem: hat E 0,hc is positive}]
Let $\hat E_{c}(u,v):=\hat E_{0,\h_c}(u-v,u+v)$, where $\{(u-v,u+v)\in \mathcal H_{\h_c}'\}\subset\{0\leq u\leq 2^{1/4}/3,0\leq v\leq 2^{1/4}/(2\sqrt{3})\}$. In coordinates $(c,l_2)\in[-1/3,1]\times[0,1]$ determined by $(u,v)=(\frac{2^{1/4}}{2\sqrt 3}\sqrt{1+c(l_2-1)},\frac{2^{1/4}}{2\sqrt 3}\sqrt{l_2})$ as before, we have
$$
\frac{256\sqrt{2}}{3}\hat E_{c}\left(\frac{2^{1/4}}{2\sqrt 3}\sqrt{1+c(l_2-1)},\frac{2^{1/4}}{2\sqrt 3}\sqrt{l_2}\right)=(1 - l_2)^3\hat E_{c,1}(c,l_2),
$$
where
$$
\begin{aligned}
\hat E_{c,1}(c,l_2) &:=E_a(c,l_2) + E_b(c,l_2)\cdot \sqrt{6 (1 + c(l_2-1)) (1 + 3 c + c l_2 + c^2 l_2)}\\
E_a(c,l_2)& :=40 - 14 l_2 + 10 l_2^2 + c (56 + 58 l_2 + 10 l_2^2 + 20 l_2^3) + c^2 (-6 + 99 l_2 + 10 l_2^2 + 31 l_2^3 + 10 l_2^4) \\
& + c^3 (-36 + 16 l_2 + 53 l_2^2 - 4 l_2^3 + 7 l_2^4) + c^4 (3 l_2 - 13 l_2^2 + 11 l_2^3 - l_2^4) + c^5 (2 l_2^2 - 4 l_2^3 + 2 l_2^4),\\
E_b(c,l_2)& :=-l_2 (1 - l_2) \left(22 + 22 c + 19 c^2 - 3 c^3 + (-4 c - c^2 + c^3) l_2\right) - 12 (1 + c)^2 l_2^2\\
& -(14 + 8 c + 5 c^2 - 3 c^3) (1 - l_2)^2.
\end{aligned}$$

We immediately see that $E_b(c,l_2)\leq 0$ for every $(c,l_2)\in [-1/3,1]\times [0,1]$, since $22 + 22 c + 19 c^2 - 3 c^3 + (-4 c - c^2 + c^3) l_2$ and $14 + 8 c + 5 c^2 - 3 c^3$ are positive.

To show that $\hat E_{c,1}(c,l_2)>0$ for every $(c,l_2)\in[-1/3,1]\times [0,1]$, we split into two steps:

{\bf Step 1.} We claim that $\hat E_{c,1}(c,l_2)> 0$ for every $(c,l_2)\in [-1/3,0]\times [0,1]$. As before, see \eqref{equ: enlarge the sqare root 1}, we have
$$
\begin{aligned}
\hat E_{c,1a}(c,l_2):&=E_a(c,l_2)+E_b(c,l_2)\cdot (5/2)(1 + (3/4 + l_2) c - 3 c^2)\\
&\leq \hat E_{c,1}(c,l_2),\quad \forall (c,l_2)\in[-1/3,0]\times [0,1].
\end{aligned}
$$

We rewrite $\hat E_{c,1a}$ as
$$\begin{aligned}
\hat E_{c,1a}(c,l_2)&=\frac{1}{36} \big[9 (1015 - 476 l_2 - 5 l_2^2 + 24 l_2^3) + c (-90 + 90 l_2 + 38 l_2^2 - 46 l_2^3 + 8 l_2^4)\big] c^2 (1 + 3 c)^2 \\
&+ \frac{1}{72} (49041 - 31095 l_2 - 1265 l_2^2 + 43 l_2^3 - 308 l_2^4) (-c^3) (1 + 3 c) + (5 + l_2) (1 + 3 c)^4\\
&+ \frac{1}{6} (4617 - 2664 l_2 - 961 l_2^2 - 160 l_2^3 - 76 l_2^4) c^4 + \frac{1}{4} (201 - 29 l_2 - 10 l_2^2) (-c) (1 + 3 c)^3,
\end{aligned}$$
and we see that $\hat E_{c,1a}(c,l_2)>0$ for every $(c,l_2)\in [-1/3,0]\times [0,1]$.

{\bf Step 2.} Now we claim that $\hat E_{c,1}(c,l_2)> 0$ for every $(c,l_2)\in[0,1]\times [0,1]$. As before, see \eqref{equ: enlarge the square root 2}, we have
$$
\begin{aligned}
\hat E_{c,1b}(c,l_2):&=E_a(c,l_2)+E_b(c,l_2)\cdot (5/2) (1 + (1 + l_2) c + (-4/3 + l_2) c^2)\\
&\leq \hat E_{c,1}(c,l_2),\quad \forall (c,l_2)\in [0,1]\times[0,1].
\end{aligned}
$$
Again, we rewrite $\hat E_{c,1b}$ as
$$
\begin{aligned}
\hat E_{c,1b}(c,l_2):&=\frac{1}{3} \big[2 (7 + 11 l_2 - 3 l_2^2) (3 + l_2^2) + (105 + 6 l_2 + 223 l_2^2 - 4 l_2^3 - 30 l_2^4) (1 - c)\big] c^4\\
&+ (5 + l_2 + 3 (7 + 9 l_2) c) (1 - c)^4 + \frac{1}{6} (373 + 531 l_2 + 95 l_2^2 - 39 l_2^3)(1 - c)^2 c^2\\
&+ \frac{1}{6} \big[3 (1 - l_2) (8 - 7 l_2) + 412l_2^2 - 67 l_2^3 - 33 l_2^4\big] (1 - c)^2 c^3,
\end{aligned}
$$
and we see that $\hat E_{c,1b}(c,l_2)>0$ for every $(c,l_2)\in [0,1]\times [0,1]$, proving the claim.

Combining the estimates above for $\hat E_{c,1a}$ and $\hat E_{c,1b}$, we conclude that
$$
\hat E_{c,1}(c,l_2)>0,\quad \forall \underline c(l_2)\leq c\leq 1,\ 0\leq l_2\leq 1.
$$
Recall that $\mathcal H_{\h_c}'\subset \{(c,l_2)|\, \underline c(l_2)\leq c\leq 1, 0\leq l_2\leq 1\}$.
Then $\hat E_{0,\h_c}(0,2^{1/4}/\sqrt 3)=0$ and $\hat E_{0,\h_c}(x)>0$ for every $x\in \mathcal H_{\h_c}'$. Lemma \ref{lem: hat E 0,hc is positive} follows.
\end{proof}

The proof of Proposition \ref{prop_strict_convexity} is now complete.

\section{Proof of Proposition \ref{prop_contactform_J}}
\label{sec_contactform}
We work with the regularized Hamiltonian
\[
\widehat K(y,x)=\frac{(y_1-x_2^3)^2}{2}
+\frac{(y_2+x_1^3)^2}{2}+\frac{9}{32}(x_1^2+x_2^2)\bigl(2-3(x_1^2-x_2^2)^2\bigr),
\]
where \(\omega_0=\sum_{j=1}^2dy_j\wedge dx_j\). Its critical value is
$\h_c=1/(4\sqrt2).$

By Proposition~\ref{prop_strict_convexity}, the bounded component of
$\widehat K^{-1}(h)$ is strictly convex for every $0< h\leq \h_c$. Consequently, the standard radial Liouville vector field
$$
Y_0:=\frac12\sum_{j=1}^2
\left(y_j\partial_{y_j}+x_j\partial_{x_j}
\right)
$$
is transverse to the regular part of the critical component. By compactness, after fixing sufficiently small neighborhoods of the critical points, $Y_0$ remains transverse to all nearby levels
$
\widehat K^{-1}(h),
h\in[\h_c,\h_c+\varepsilon_0],
$ $\varepsilon_0>0$ small,
outside these neighborhoods.

The critical level contains four saddle-center points corresponding to the critical points of the potential,
$(\pm\ 2^{1/4}/\sqrt3,0),
(0,\pm 2^{1/4}/\sqrt3).$
We first analyze the point near
$(-2^{1/4}/\sqrt{3},0).$

Let $a=2^{1/4}/\sqrt3$. The corresponding critical point of $\widehat K$ is $(0,a^3, -a,0).$

We work in the translated coordinates
\[
\hat y_1=y_1,
\qquad
\hat y_2=y_2-a^3,
\qquad
\hat x_1=x_1+a,
\qquad
\hat x_2=x_2.
\]
Thus, the critical point corresponds to the origin in the coordinates
$
(\hat y_1,\hat y_2,\hat x_1,\hat x_2).
$

The quadratic part of \(\widehat K-\h_c\) is
\[
Q(\hat y,\hat x)
=
\frac12 \hat y_1^2+\frac12 \hat y_2^2
+\sqrt2\,\hat y_2\hat x_1
-\frac54 \hat x_1^2
+\frac34 \hat x_2^2.
\]

The Hamiltonian matrix $J\nabla^2Q(0,a^3,-a,0)$ associated with \(Q\) has eigenvalues
$\pm\lambda_1$ and
$\pm i\lambda_2,$
where
\[
\lambda_1=\sqrt{\sqrt7+\frac12}\approx 1.7736,
\qquad
\lambda_2=\sqrt{\sqrt7-\frac12}\approx 1.4648.
\]
The eigenvectors corresponding to the eigenvalues \(\pm\lambda_1\) are
\[
e_{\pm}=
\begin{pmatrix}
\mp (\sqrt7+2)\sqrt{1+2\sqrt7},3,-\sqrt2(\sqrt{7}+2),\mp\sqrt{2+4\sqrt7}
\end{pmatrix}^T.
\]
A basis of the center eigenspace is
\[
c_1=
\begin{pmatrix}
0, 2+\sqrt7, \sqrt2, 0
\end{pmatrix}^T,
\qquad
c_2=
\begin{pmatrix}
2-\sqrt7, 0, 0, \sqrt2
\end{pmatrix}^T.
\]
After normalization, we obtain vectors
$E_+,E_-,C_1,C_2$ satisfying
$\omega_0(E_+,E_-)=1$ and $\omega_0(C_1,C_2)=1,$
and all other symplectic pairings equal to zero. We now diagonalize the hyperbolic part. Define
\[
F_1=\frac{-E_++E_-}{\sqrt2},
\qquad
F_2=\frac{-E_+-E_-}{\sqrt2}.
\]
Then $\omega_0(F_1,F_2)=1$ and $V=[F_1, C_1, F_2, C_2]$ becomes a symplectic $4\times 4$ matrix.

Define symplectic coordinates $w=(w_1,w_2,w_3,w_4)$ as
\[
(\hat y_1,\hat y_2,\hat x_1,\hat x_2)^T=
w_1F_1+w_2C_1+w_3F_2+w_4C_2=V\cdot w^T.
\]
Hence, $\omega_0=dw_1\wedge dw_3+dw_2\wedge dw_4.$ In these coordinates, the quadratic Hamiltonian takes the form
\[
Q(w)=\frac{\lambda_1}{2}(w_1^2-w_3^2)
+\frac{\lambda_2}{2}(w_2^2+w_4^2).
\]
Therefore
\[
\widehat K(w)=\h_c+\frac{\lambda_1}{2}(w_1^2-w_3^2)+\frac{\lambda_2}{2}(w_2^2+w_4^2)+O(|w|^3).
\]
The variables $w_1$ and $w_3$ correspond to the hyperbolic directions, while $w_2$ and $w_4$ correspond to the center directions. The Lyapunov orbit of the quadratic model on the level $Q^{-1}(\varepsilon)$ is given by
$$
P_{L,\varepsilon}:=\{w_1=w_3=0,
\quad \lambda_2(w_2^2+w_4^2)
=2\varepsilon\}.
$$

In coordinates $w$, the global radial Liouville vector field $Y_0$ is given by
$$
Y_0(w)=\frac12
\left(w_1\partial_{w_1}+w_3\partial_{w_3}
+w_2\partial_{w_2}+w_4\partial_{w_4}\right)+B,
$$
where $B=\frac12 V^{-1}(0,a^3,-a,0)^T$ is constant.
A direct computation gives
\[
B=d_2\,\partial_{w_2}-d_3\,
\partial_{w_3},
\]
where
\[
d_2=\frac{\sqrt3 (5 \sqrt7 - 13)}{36 (7(2 \sqrt7 - 1))^{1/4}}\approx 0.0047>0,
\qquad
d_3=\frac{(2 \sqrt7 - 1)^{1/4}}{21^{1/4} \cdot 36} (13 + 5 \sqrt7)\approx 0.4899>d_2>0.
\]
In particular, the relevant part of the critical set is contained in $w_3\geq 0$.

We compute
\[
dQ(Y_0)=Q+\lambda_2d_2w_2+\lambda_1d_3w_3.
\]
On the level \(\widehat K=\h_c+\varepsilon>\h_c\), we consider $w_3\in[\sqrt{\varepsilon},\delta_0]$, where $\delta_0>0$ is small and determined below. Using the energy relation, we obtain
\begin{equation}\label{equ: energy relation}
\begin{aligned}
\lambda_1 w_1^2+\lambda_2(w_2^2+w_4^2)
=2\varepsilon+\lambda_1 w_3^2+O(|w|^3)\leq 3(\varepsilon+ w_3^2),
\end{aligned}
\end{equation}
for every $\varepsilon, \delta_0>0$ sufficiently small. Then
\begin{equation}\label{estimates_w}
\begin{aligned}
|w_1| \leq \sqrt{\frac{3}{\lambda_1}(\varepsilon+w_3^2)},\qquad
|w_2|,|w_4| \leq \sqrt{\frac{3}{\lambda_2}(\varepsilon+w_3^2)}.
\end{aligned}
\end{equation}

We see that the last two terms of $dQ(Y_0)$ are of order \(\sqrt{\varepsilon+w_3^2}\), while $Q$ is of order $\varepsilon$. Consequently, the sign of \(dQ(Y_0)\) is not controlled near the saddle-center. For this reason, we introduce another Liouville vector field
$$
Y_1(w):=\frac12\left(3w_1\partial_{w_1}+w_2\partial_{w_2}-w_3\partial_{w_3} +w_4\partial_{w_4}\right).
$$
Since \(Q\) is homogeneous of degree two,
we obtain $dQ(Y_1)=Q(w)+\lambda_1(w_1^2+w_3^2)\geq (\lambda_2/2)|w|^2>0$. Hence, $Y_1$ is transverse to every positive and negative level of the quadratic Hamiltonian.
Moreover, $\widehat K(w)=h_c+Q(w)+O(|w|^3)$, and therefore $d\widehat K(Y_1)=dQ(Y_1)+O(|w|^3)$. Hence, for $\delta_0>0$ sufficiently small, we have
\begin{equation}\label{equ: dK(Y1)}
d\widehat K(Y_1)>\frac{1}{2}dQ(Y_1)>\frac{\lambda_2}{4}|w|^2>0, \qquad \forall |w| <\delta_0.
\end{equation}

We shall therefore use \(Y_1\) near the saddle-center and \(Y_0\) away from the saddle-centers. The next step is to interpolate between these two Liouville vector fields. For every sufficiently small \(\varepsilon>0\), this will produce a Liouville vector field $Y_\varepsilon$
defined near the compact portion of the level $\widehat K^{-1}(\h_c+\varepsilon),$
which coincides with $Y_1$ close to $w_3=0$, coincides with $Y_0$ away from it, and satisfies $d\widehat K(Y_\varepsilon)>0$. This will yield the desired contact form $\alpha_\varepsilon= \iota_{Y_\varepsilon}\omega_0\big|_{\widehat K^{-1}(\h_c+\varepsilon)}.$

We now construct the interpolated Liouville field $Y_\varepsilon$ and prove the transversality. We work in the local coordinates $(w_1,w_2,w_3,w_4)$, where
\[
\widehat K(w)=\h_c+Q(w)+R_3(w),
\qquad
Q(w)=\frac{\lambda_1}{2}(w_1^2-w_3^2)+\frac{\lambda_2}{2}(w_2^2+w_4^2),
\]
and \(R_3(w)=O(|w|^3)\), \(dR_3(w)=O(|w|^2)\). Recall that $Y_0=Y_1-w_1\partial_{w_1}+w_3\partial_{w_3}+B$, where
$$
Y_2:=Y_1-w_1\partial_{w_1}+w_3\partial_{w_3}=\frac12\left(w_1\partial_{w_1}+w_2\partial_{w_2}+w_3\partial_{w_3} +w_4\partial_{w_4}\right),
\qquad B=d_2\partial_{w_2}-d_3\partial_{w_3}.
$$
Thus, $dQ(Y_2)=Q$ and $dQ(B)=\lambda_2d_2w_2+\lambda_1d_3w_3$.

We shall construct the interpolation in the region $w_3>0$. The construction near the other saddle-centers is obtained by symmetry.

Fix $\varepsilon>0$ small and consider the constants $1\leq C<C'_\varepsilon<\delta_0/\sqrt\varepsilon$ to be chosen below. Let $\beta_\varepsilon=\beta_\varepsilon(w_3)$
be a smooth non-decreasing cutoff function satisfying
\[
\beta_\varepsilon(w_3)=0\quad\text{for}\quad w_3\le C\sqrt\varepsilon,
\qquad
\beta_\varepsilon(w_3)=1\quad\text{for}\quad w_3\geq C'_\varepsilon\sqrt\varepsilon.
\]
We define the interpolated primitive by
$
\lambda_\varepsilon=\lambda_1+d(\beta_\varepsilon G),
$
where
$$
\lambda_j=\iota_{Y_j}\omega_0, \ j=0,1, \quad \lambda_0-\lambda_1=dG=-\iota_{X_G}\omega_0, \quad
G(w)=d_3w_1+d_2w_4-w_1w_3.
$$
Notice that $-w_1\partial_{w_1}+w_3\partial_{w_3}+B= -X_G$. Then
$d\lambda_\varepsilon=\omega_0.$
Let \(Y_\varepsilon\) be defined by
$\iota_{Y_\varepsilon}\omega_0 =\lambda_\varepsilon.$

Since
$
\lambda_\varepsilon=(1-\beta_\varepsilon)\lambda_1+\beta_\varepsilon\lambda_0
+G\,d\beta_\varepsilon,
$
we have
\[
Y_\varepsilon=Y_1-\beta_\varepsilon X_G+Z_\varepsilon,
\]
where $\iota_{Z_\varepsilon}\omega_0 =G\,d\beta_\varepsilon.$
Because of \(d\beta_\varepsilon= \beta_\varepsilon'(w_3)\,dw_3\), we get
$Z_\varepsilon=-G(w)X_{\beta_\varepsilon}=G(w)\beta_\varepsilon'(w_3)\partial_{w_1}.$
Therefore
\[
d\widehat K(Y_\varepsilon)=d\widehat K(Y_1)-\beta_\varepsilon(w_3) d\widehat K(X_G)
+G(w)\beta_\varepsilon'(w_3)\partial_{w_1}\widehat K.
\]
We prove that this is positive on the level \(\widehat K^{-1}(\h_c+\varepsilon)\), for every $\varepsilon>0$ sufficiently small, by considering three regions.

First, consider the inner region
$
0\leq w_3\le C\sqrt\varepsilon.
$
Here \(\beta_\varepsilon=0\), hence \(Y_\varepsilon=Y_1\). On the level \(\widehat K=\h_c+\varepsilon\), due to the estimates \eqref{estimates_w} and \eqref{equ: dK(Y1)}, we have
\[
d\widehat K(Y_1)=dQ(Y_1)+dR_3(Y_1)\geq \frac{\lambda_2}{4}|w|^2>0,
\]
for every \(\varepsilon>0\) sufficiently small.

We next consider the outer region
$
C'_\varepsilon\sqrt\varepsilon\le w_3\leq \delta_0,
$
where \(\delta_0>0\) is small. In this region \(\beta_\varepsilon=1\), hence \(Y_\varepsilon=Y_0\). We claim that \(d\widehat K(Y_0)>0\) in this region, for every $\delta_0>0$ fixed sufficiently small, and $\varepsilon>0$ sufficiently small. Here, \(C'_\varepsilon>0\) will be chosen so that $C'_\varepsilon \sqrt{\varepsilon}<\delta_0$.

Indeed,
$
d\widehat K(Y_0)=d\widehat K(Y_1)+d\widehat K(-w_1\partial_{w_1}+w_3\partial_{w_3}+B).
$
The first term is
$
d\widehat K(Y_1)=O(|w|^2).
$
For the second term, we have
\[
d\widehat K(-w_1\partial_{w_1}+w_3\partial_{w_3}+B)
=\lambda_1d_3w_3+\lambda_2d_2w_2+O(|w|^2).
\]
Using the energy relation again,  we obtain, $|w_2|< c_0\left(w_3+\sqrt\varepsilon\right),$ for every $\delta_0>0$ fixed sufficiently small, and $\varepsilon>0$ sufficiently small, where $c_0=\sqrt{3/\lambda_2}$, see \eqref{estimates_w}.
Since \(w_3\geq C'_\varepsilon\sqrt\varepsilon\), we obtain $|w_2|< 2c_0 w_3,$ if $C'_\varepsilon>0$ is sufficiently large, for every $\varepsilon>0$ sufficiently small.
Thus, fixing $\delta_0>C'_\varepsilon\sqrt\varepsilon$ sufficiently small, $C'_\varepsilon>0$ sufficiently large, and using the relation $\lambda_1 d_3 -2\lambda_2 d_2 c_0 \approx 0.8491>0,$
we obtain $d\widehat K(-w_1\partial_{w_1}+w_3\partial_{w_3}+B)\ge c_1 w_3$ for some \(c_1>0\). Hence
$d\widehat K(Y_0)\geq (c_1/2)w_3>0$ on this whole outer region, for every $\varepsilon>0$ sufficiently small.

It remains to treat the transition region
$C\sqrt{\varepsilon}\le w_3\leq C'_\varepsilon\sqrt{\varepsilon}$. In this region the energy relation implies \(|w|=O(C'_\varepsilon\sqrt{\varepsilon})\). Recall that
\[
Y_\varepsilon=Y_1-\beta_\varepsilon X_G+Z_\varepsilon,\qquad -X_G=Y_0-Y_1,\qquad
Z_\varepsilon=G(w)\beta_\varepsilon'(w_3)\partial_{w_1}.
\]
Therefore,
\[
d\widehat K(Y_\varepsilon)=d\widehat K(Y_1)-\beta_\varepsilon\, d\widehat K(X_G)+d\widehat K(Z_\varepsilon).
\]
We now estimate these three terms separately. First, on the level \(\widehat K=\h_c+\varepsilon\), the estimate \eqref{equ: dK(Y1)} implies that
\begin{equation}\label{First_Term}
d\widehat K(Y_1)>\frac{\lambda_2}{4}|w|^2,
\end{equation}
for every $\varepsilon>0$ sufficiently small.

Second, $-d\widehat K(X_G)=-dQ(X_G)-dR_3(X_G).$ Since $-dQ(X_G)=\lambda_1d_3w_3+\lambda_2d_2w_2+O(|w|^2)$, and
 \(dR_3(B)=O(|w|^2)\), we obtain
\[
-d\widehat K(X_G)=\lambda_1d_3w_3+\lambda_2d_2w_2+O(|w|^2).
\]
Choosing any $C\geq 1$, we obtain \(|w_2|\le 2c_0w_3\). Since
$\lambda_1d_3-2\lambda_2d_2c_0 \approx 0.8491>0,$
we obtain
\begin{equation}\label{equ: ineq c2w3}
-d\widehat K(X_G)\geq c_2 w_3>0,\quad c_2:=0.8.
\end{equation}
throughout the transition region, for every $\varepsilon>0$ sufficiently small. Here, $c_2>0$ is a constant independent of $\varepsilon>0$.

Third, $d\widehat K(Z_\varepsilon)=
G(w)\beta_\varepsilon'(w_3)\partial_{w_1}\widehat K.$ Since \(G(w)= d_3 w_1 + d_2 w_4-w_1w_3\) and
\[
\partial_{w_1}\widehat K=\partial_{w_1}Q+\partial_{w_1}R_3=\lambda_1 w_1+O(|w|^2),
\]
together with the estimates in \eqref{estimates_w}, we obtain
\begin{equation}\label{Third_Term}
\begin{aligned}
d\widehat K(Z_\varepsilon)
&= \beta_\varepsilon'(w_3)(d_3 w_1 + d_2 w_4-w_1w_3)(\lambda_1w_1+O(|w|^2))\\
&=\beta_\varepsilon'(w_3)(\lambda_1d_3w_1^2+\lambda_1 d_2 w_1w_4 + O(|w|^3))\\
& \geq \beta_\varepsilon'(w_3)(\lambda_1d_2w_1w_4 +O(|w|^3))\\
& \geq -\beta_\varepsilon'(w_3) c_3(\varepsilon + w_3^2)
\end{aligned}
\end{equation}
for every $\varepsilon>0$ sufficiently small, after decreasing $\delta_0$ if necessary. Here, we take $c_3=4d_2\sqrt{\lambda_1/\lambda_2}\approx 0.0207>0$, which is a constant independent of $\varepsilon$.

Combining \eqref{First_Term}, \eqref{equ: ineq c2w3} and \eqref{Third_Term}, we obtain
$$
d\widehat K(Y_\varepsilon)> \frac{\lambda_2}{4}|w|^2 + \beta_\varepsilon(w_3) c_2w_3 - c_3\beta_\varepsilon'(w_3)(\varepsilon + w_3^2).
$$
We choose \(\beta_\varepsilon=\beta_\varepsilon(w_3), w_3 \in [C\sqrt{\varepsilon},C'_\varepsilon \sqrt{\varepsilon}],\) satisfying the following differential inequality
\begin{equation}\label{differential_inequality}
0\leq \beta_\varepsilon'(w_3)\leq\,\frac{(\lambda_2/4)w_3^2+c_2\beta_\varepsilon(w_3)w_3}{c_3(\varepsilon+w_3^2)} \quad \Rightarrow \quad  d\widehat K(Y_\varepsilon)>0.
\end{equation}
Such a $\beta_\varepsilon$ always exists if $C'_\varepsilon>C=1$ is taken sufficiently large depending on $\varepsilon>0$.

Let $\tilde \beta_\varepsilon(w_3)=\beta_\varepsilon(\sqrt{\varepsilon}w_3),w_3\in[C,C'_\varepsilon]$. Then the differential inequality becomes
$$
\tilde \beta_\varepsilon'(w_3)\leq \frac{\sqrt{\varepsilon}(\lambda_2/4)w_3^2+c_2\tilde \beta_\varepsilon(w_3)w_3}{c_3(1+w_3^2)}.
$$
We consider equality above and initial condition $\tilde \beta_\varepsilon(C)=0$ to obtain the solution
$$
\tilde\beta_\varepsilon(w_3)= \frac{\sqrt{\varepsilon}\lambda_2}{4c_3}(1+w_3^2)^{\frac{c_2}{2c_3}} \int_C^{w_3}w_3^2(1+w_3^2)^{-1-\frac{c_2}{2c_3}}dw_3.
$$
We define $C_\varepsilon'>C$ to satisfy $\tilde \beta_\varepsilon(C_\varepsilon') = 1$. We see that $C_\varepsilon'$ is well-defined since $\tilde \beta_\varepsilon(w_3) \to +\infty$ as $w_3\to +\infty$. Moreover, $C'_\varepsilon\rightarrow +\infty$ as $\varepsilon\rightarrow 0$ since $c_2>c_3$ and thus $K_C:=\int_C^{+\infty}w_3^2(1+w_3^2)^{-1-\frac{c_2}{2c_3}}dw_3<+\infty$.  Since $1/2-c_3/(2c_2)>1/4,$ we conclude that
$$
C_\varepsilon' = O(\varepsilon^{-\frac{c_3}{2c_2}}) \quad \Rightarrow \quad
C'_\varepsilon\sqrt{\varepsilon}=O(\varepsilon^{\frac{1}{2} - \frac{c_3}{2c_2}  })\to 0 \quad \mbox{as } \quad \varepsilon \to 0,
$$

Smoothing $\beta_\varepsilon$ near the values $0$ and $1$ in a standard way, it is then possible to obtain the cut-off function $\beta_\varepsilon:\R \to [0,1]$ satisfying \eqref{differential_inequality} and the other desired properties.  In this way, we find $Y_\varepsilon$ which is transverse to $\widehat K^{-1}(\h_c+ \varepsilon) \cap \{x_3 \geq 0\}$ for every $\varepsilon>0$ sufficiently small.

This proves transversality in the three regions:
$$
0<w_3\leq C\sqrt\varepsilon,\qquad
C\sqrt\varepsilon\leq w_3\leq C'_\varepsilon\sqrt\varepsilon,\qquad
C'_\varepsilon\sqrt\varepsilon\leq w_3<\delta_0.
$$
for every $\varepsilon>0$ sufficiently small.

In the region $w_3>\delta_0$, the Liouville vector field is \(Y_0\), and its transversality follows from the strict convexity of the critical hypersurface away from the saddle-centers. In the region $-C\sqrt{\varepsilon} \leq w_3\leq 0$, $Y_\varepsilon$ coincides with $Y_1$ and its transversality follows from estimates similar to the ones in the region $0\leq w_3\leq C\sqrt{\varepsilon}$. Thus \(Y_\varepsilon\) is transverse to the supercritical level \(\widehat K^{-1}(\h_c+\varepsilon)\)
in the relevant subset of the energy surface.

We conclude this section by describing the finite-energy planes associated with the local saddle-center model. As before, consider the quadratic Hamiltonian $Q(w)=\frac{\lambda_1}{2}(w_1^2-w_3^2)+\frac{\lambda_2}{2}(w_2^2+w_4^2)$ and the Liouville vector field $Y_1=\frac12(3w_1\partial_{w_1}
+w_2\partial_{w_2}-w_3\partial_{w_3}
+w_4\partial_{w_4})$. As observed above, $dQ(Y_1)=Q(w)+\lambda_1(w_1^2+w_3^2)$ is a positive-definite quadratic form, and thus \(Y_1\) is transverse to every regular level \(Q^{-1}(c_0)\), \(c_0>0\).
Let
\[
\alpha_1=\iota_{Y_1} \omega_0=
\frac12w_3\,dw_1-\frac12 w_4\,dw_2
+\frac32 w_1\,dw_3
+\frac12 w_2\,dw_4
\]
be the contact form induced by \(Y_1\) on \(Q^{-1}(c_0)\). A direct computation shows that the contact structure
\(\xi=\ker\alpha_1\) is spanned by the vectors
$$
e_1=X_1-\frac{dQ(X_1)}{dQ(Y_1)}Y_1,
\qquad
e_2=X_2-\frac{dQ(X_2)}{dQ(Y_1)}Y_1,
$$
where
\[
X_1=
\frac12 w_4\partial_{w_1}
+\frac12 w_3\partial_{w_2}
+\frac12 w_2\partial_{w_3}
-\frac32 w_1\partial_{w_4},
\quad
X_2=
-\frac12 w_2\partial_{w_1}
+\frac32 w_1\partial_{w_2}
+\frac12 w_4\partial_{w_3}
+\frac12w_3\partial_{w_4}.
\]

We choose the compatible almost complex structure \(J\) on
\(\R\times Q^{-1}(c_0)\) determined by
$
J\cdot e_1=e_2$ and $J\cdot \partial_a=R_1$, where $R_1=X_Q/\alpha_1(X_Q)$ is the Reeb vector field of $\alpha_1$.
Consider a \(J\)-holomorphic cylinder
$\tilde u=(a,u):
\R\times \R/\Z
\to
\R\times Q^{-1}(c_0)
$
satisfying
\[
\pi\partial_tu=J\cdot \pi\partial_su,
\qquad
\partial_sa=\alpha_1(\partial_tu),
\qquad
\partial_ta=-\alpha_1(\partial_su),
\]
where $\pi:TQ^{-1}(c_0) \to \xi$ is the projection along the Reeb vector field $R_1$.

We look for solutions contained in the invariant sphere
\(\{w_3=0\}\) of the form
\[
u(s,t)=\bigl(w_1(s),r(s)\cos(2\pi t),0,
r(s)\sin(2\pi t)\bigr),
\]
where $w_1(s)=\pm \sqrt{(2c_0-\lambda_2r(s)^2)/\lambda_1}$ and $r\geq 0$. Substituting the above ansatz into the Cauchy-Riemann equations yields $r'(s)=g(r),$ where
\[
g(r)=\frac{2\pi r(2c_0-\lambda_2r^2)
\bigl(\lambda_1r^2+9(2c_0-\lambda_2r^2)
\bigr)}{\bigl(2c_0+2(2c_0-\lambda_2r^2)
\bigr)^2}.
\]

Setting
$r_0=\sqrt{2c_0/\lambda_2},$ we have $g(r)>0$ for $0<r<r_0,$
 and $g(0)=g(r_0)=0$. Therefore, every solution with
\(r(0)\in(0,r_0)\) satisfies
$r(s)\to 0$ as $s\to-\infty,$ and $r(s)\to r_0$ as $s\to+\infty.$

Choose $w_1(s)=\sqrt{(2c_0-\lambda_2r(s)^2)/\lambda_1}.$ The positive end is asymptotic to the Lyapunov orbit
$P_{L,c_0}=Q^{-1}(c_0)\cap\{w_1=w_3=0\},$
while the negative end converges to the  point $(\sqrt{2c_0/\lambda_1},0,0,0).$
Hence, the negative puncture is removable, and the cylinder extends to an embedded finite-energy $J$-holomorphic plane $\tilde u_+:\C\to\R\times Q^{-1}(c_0).$
Similarly, choosing $w_1(s)=-\sqrt{(2c_0-\lambda_2r(s)^2)/\lambda_1}$, gives a second embedded finite-energy plane
$\tilde u_-:\C\to\R\times Q^{-1}(c_0).$
The projections of these two planes are embedded, disjoint, and coincide with the hemispheres of the two-sphere $
Q^{-1}(c_0)\cap\{w_3=0\}.$

We now transfer these local planes to the original Hamiltonian energy
surfaces slightly above the critical value. Recall that, near the saddle-center $(0,a^3,-a,0)$, the rescaled Hamiltonian has the form
$\widehat K_\varepsilon(w)=
\varepsilon^{-1}\bigl(\widehat K(\sqrt{\varepsilon}\,w)-\h_c\bigr),$
and satisfies $
\widehat K_\varepsilon \to Q$ in $C^\infty_{\mathrm{loc}}$
as \(\varepsilon\to0^+\). Notice that $\widehat K_\varepsilon$ can be regarded as a smooth family of functions depending on a parameter $c$, which is defined near $c=0$, and satisfies $c=\sqrt{\varepsilon}$ for every $\varepsilon>0$ sufficiently small.

The rescaling map $
\Phi_\varepsilon(w):=\sqrt{\varepsilon}\,w$ identifies compact subsets of the model level \(Q^{-1}(1)\) with subsets of the original energy level
$
\widehat K^{-1}(\h_c+\varepsilon)$
near the saddle-center. Due to the construction of $Y_\varepsilon$, we can assume that the contact form $\alpha_\varepsilon$ coincides with $\alpha_1$ in the uniform region $-1\leq w_3\leq 1$.

By the persistence of nondegenerate hyperbolic periodic orbits, for every \(\varepsilon>0\) sufficiently small, the Lyapunov orbit \(P_{L,1}\subset Q^{-1}(1)\) gives rise to a hyperbolic periodic orbit
$\widehat P_{2,\varepsilon}\subset
\widehat K^{-1}(\h_c+\varepsilon)$
satisfying $\varepsilon^{-1/2}\widehat P_{2,\varepsilon}\to P_{L,1}$ in $C^\infty.$

Consider the contact form $\alpha_\varepsilon = \alpha_1 |_{\widehat K_\varepsilon^{-1}(1)}$ on $\widehat K_\varepsilon^{-1}(1)$ for every $\varepsilon>0$ sufficiently small. Take any $\alpha_\varepsilon$-adapted almost complex structure $\widehat J_\varepsilon$ in the symplectization of $\widehat K_\varepsilon^{-1}(1)$, smoothly depending on $c$ near $c=0$, with $c=\sqrt{\varepsilon}$ for $\varepsilon>0$ small. Since the finite-energy planes \(\tilde u_\pm\) are automatically Fredholm regular \cite{Wendl10a}, they persist for every $\varepsilon>0$ sufficiently small.  Thus, for every sufficiently small \(\varepsilon>0\), there exist finite-energy planes
$$
\tilde u_{\pm,\varepsilon}
=
(a_{\pm,\varepsilon},u_{\pm,\varepsilon})
:
\mathbb C
\to
\mathbb R\times \widehat K_\varepsilon^{-1}(1)
$$
asymptotic to the corresponding Lyapunov orbit in the rescaled level. The projections of these planes are located in $-1/2\leq w_3\leq 1/2$ for every $\varepsilon>0$ sufficiently small.

Returning to the original coordinates, define
$$
\tilde U_{\pm,\varepsilon}
=(A_{\pm,\varepsilon},U_{\pm,\varepsilon}):\mathbb C\to\mathbb R\times \widehat K^{-1}(\h_c+\varepsilon)
$$
by $A_{\pm,\varepsilon}:=\varepsilon\, a_{\pm,\varepsilon}$ and $
U_{\pm,\varepsilon}:= \Phi_\varepsilon \circ u_{\pm,\varepsilon}.$ The almost complex structure on
\(\mathbb R\times\widehat K^{-1}(\h_c+\varepsilon)\), still denoted $\hat J_\varepsilon$, is the one induced by the rescaling map and by the contact form
$\alpha_\varepsilon=
\iota_{Y_\varepsilon}\omega_0
\big|_{\widehat K^{-1}(\h_c+\varepsilon)}.$

With this choice, the maps \(\tilde U_{\pm,\varepsilon}\) are
\(\widehat J_\varepsilon\)-holomorphic finite-energy planes asymptotic to
\(\widehat P_{2,\varepsilon}\). Their projections lie in an \(O(\sqrt{\varepsilon})\)-neighborhood of the saddle-center and converge, after applying the inverse rescaling, to the model planes in \(Q^{-1}(1)\). In particular, the two planes approach the Lyapunov orbit through opposite directions and their projections form, together with \(\widehat P_{2,\varepsilon}\), the two hemispheres of the transverse sphere in the neck region.

\section{Proof of Proposition \ref{prop_retrograde}}\label{sec_retrograde}

We briefly recall Birkhoff's shooting argument. Following \cite{Birkhoff1915}, see also \cite[Chapter~8]{FvK}, we consider the family of initial conditions
\[
(q_1(0),q_2(0),\dot q_1(0),\dot q_2(0))
=(q_{1,0},0,0,\dot q_{2,0}), \qquad
q_{1,0}<0,\ \dot q_{2,0}<0,
\]
on the energy surface \(H^{-1}(E)\), where $E<E_{c}$.

Using the equations of motion \eqref{equ: 2nd ODE}, one shows that if \(q_1(0)<0\) is sufficiently close to \(0\), then the corresponding solution remains in the lower half-plane and reaches the axis \(q_1=0\) at some time
\(t_0>0\). Moreover,
$\arg(\dot q(t_0))\to-\pi/4$ as
$q_{1,0}\to 0^-$.

On the other hand, let \(\bar q_1>0\) be uniquely determined by $U(-\bar q_1,0)=E$. If \(q_1(0)=-\bar q_1\), then
\(\dot q_1(0)=\dot q_2(0)=0\), and a direct computation from \eqref{equ: 2nd ODE} gives $\ddot q_2(0)=0,$ and $\dddot q_2(0)>0.$ Hence, the corresponding trajectory immediately enters the upper half-plane.

By continuity, trajectories with an initial condition
\(q_1(0)\in(-\bar q_1,0)\) may have both possible behaviors.
For such an initial condition, let \(t_0>0\) denote the first time at which
either \(q_1(t_0)=0\) or \(q_2(t_0)=0\). In particular,
$
q_1(t),q_2(t)<0,
$ and $
0<t<t_0.$

It is straightforward to check that the Hill region
\(B_E\) is contained in the unit disk for every \(E<E_c\). Using
\eqref{equ: 2nd ODE}, we see that
$
(\dot q_1+2q_2)(0)=0,
$
and
\[
\frac{d}{dt}(\dot q_1+2q_2)
=
-\partial_{q_1}U
=
3q_1\Bigl(1-\frac1{|q|^3}\Bigr)
>0
\]
as $q(t)$ enters the lower half-plane. Therefore, we obtain that $(\dot q_1+2q_2)(t)>0$ for every \(t\in(0,t_0)\). Consequently,
$\dot q_1(t)>-2q_2(t)>0.$
In particular, \(q_1(t)\) increases monotonically and \(t_0>0\) is finite.

Let \(q_{1,*}<0\) be the smallest number so that all solutions with initial conditions satisfying $q_{1,*}<q_{1,0}<0$ remain in the third quadrant $q_1,q_2<0$ for $0<t<t_0$, and hits $q_1=0, q_2<0$ transversely at time $t=t_0$. By the
previous discussion, \(q_{1,*}>-\bar q_1\). Standard shooting arguments show
that the limiting trajectory corresponding to \(q_{1,*}\) necessarily ends in
collision. Consequently, for initial conditions sufficiently close to
\(q_{1,*}\), the trajectory crosses the symmetry axis \(q_1=0\) transversely.

By continuity, there exists an intermediate initial condition for which the
trajectory meets the symmetry axis $q_1=0$ orthogonally. Reflecting successively with
respect to the coordinate axes produces a symmetric periodic orbit. This is the so-called Birkhoff
retrograde orbit.

We now extend Birkhoff's shooting argument to energies equal or slightly above
the critical value \(E_c\). Not only we need to prove the existence of the retrograde orbit, but we also need to show that there exists a continuous family of such orbits for $E$ sufficiently close to the critical level. Hence, consider the two families of initial conditions
\[
\{(0,p_2,q_1,0)\mid -1\le q_1<0,\ p_2+q_1<0\}
\quad \mbox{ and } \quad
\{(0,\hat p_2,\hat q_1,0)\mid 0<\hat q_1\le 1,\ \hat p_2+\hat q_1>0\}
\]
on the energy surface \(H^{-1}(E)\). For every initial condition sufficiently
close to $q=(0,0)$, let \(t_0>0\) denote the first time such that either
\(q_1(t_0)=0\) or \(\hat q_1(-t_0)=0\).
The corresponding solutions define two real-analytic shooting curves in the
rectangle $
Q:=[-\pi/2,\pi/2]\times[-\bar q_2,0],$
namely
\[
\Gamma_1=\{(\arg(\dot q(t_0)),q_2(t_0))
:q_1(0)\in(-\varepsilon_0,0)\},
\quad
\Gamma_2=\{(\arg(\dot{\hat q}(-t_0)),\hat q_2(-t_0)):\hat q_1(0)\in(0,\varepsilon_0)\},
\]
where \(\varepsilon_0>0\) is sufficiently small and \(\bar q_2>0\) is defined by $U(0,\bar q_2)=E.$ By the reversibility of the equations of motion, the curves \(\Gamma_1\) and \(\Gamma_2\) are symmetric with respect to the vertical axis \(\{0\}\times\mathbb R\). Furthermore,
$\Gamma_1\to(-\pi/4,0)$ as $q_1(0)\to 0^-$, and $\Gamma_2\to(\pi/4,0)$ as $
\hat q_1(0)\to0^+.$

To continue the shooting curves towards the equilibrium point \(S_1\), we need the following lemma.

\begin{lem}\label{lem: hat q2(0)>-1+delta}
There exists \(\delta_0>0\) small such that for every energy \(E\) sufficiently close
to \(E_c\), the solution satisfying
$q_1(0)=-1+\delta_0,$ $q_2(0)=0,$ $\dot q_1(0)=0$ and $\dot q_2(0)<0,$ admits a time \(t_0>0\) such that $q_2(t_0)=0,$
while $q_2(t)<0<\dot q_1(t)$ for every \(t\in(0,t_0)\). Moreover, $q_1(0)< q_1(t_0)<0$ and $\dot q_2(t_0)>0.$
\end{lem}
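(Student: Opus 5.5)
The plan is to reduce the lemma to a linear computation at the saddle $\bar S_1=(-1,0)$, after rescaling by $\delta_0$, and then recover the nonlinear statement by continuous dependence on parameters. Write $q_1=-1+\xi$, $q_2=\eta$.

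\textbf{Step 1: linearize.} Linearizing \eqref{equ: 2nd ODE} at $(-1,0)$ gives
\[
\ddot\xi=-2\dot\eta+9\xi,\qquad \ddot\eta=2\dot\xi-3\eta .
\]
The coefficients come from $U_{11}(-1,0)=-9$ and $U_{22}(-1,0)=3$. The characteristic roots are $\pm\lambda_1,\pm i\lambda_2$, which matches the saddle--center spectrum already computed in Section~\ref{sec_contactform}.

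\textbf{Step 2: rescale.} Along the $q_1$-axis, $U(-1+\xi,0)=E_c-\tfrac92\xi^2+O(\xi^3)$. Hence the initial speed on $H^{-1}(E)$ at $q(0)=(-1+\delta_0,0)$ is
\[
|\dot q_2(0)|=\sqrt{9\delta_0^2+2(E-E_c)+O(\delta_0^3)} .
\]
I set $\xi=\delta_0 X$, $\eta=\delta_0 Y$ and restrict to energies with $|E-E_c|\le \kappa\delta_0^2$ for a small $\kappa$. The rescaled solutions then converge in $C^1$ on compact time intervals, as $\delta_0\to0$ and $\kappa\to 0$, to the solution of the linear system with
\[
X(0)=1,\quad Y(0)=0,\quad \dot X(0)=0,\quad \dot Y(0)=-3 .
\]
The uniformity comes from the smooth dependence of the vector field on $(\delta_0,E)$ in these scaled coordinates. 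Note that "every $E$ sufficiently close to $E_c$" is interpreted after fixing $\delta_0$, which is exactly what this restriction allows.

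\textbf{Step 3: the model solution.} I will write this solution explicitly in the eigenbasis, as $\cosh/\sinh(\lambda_1 t)$ terms plus $\cos/\sin(\lambda_2 t)$ terms. I then need to verify three things.
\begin{itemize}
\item[(a)] $\dot X>0$ on the relevant interval. At the start this holds because $\ddot X(0)=9-2\dot Y(0)=15>0$.
\item[(b)] $Y<0$ until a first positive time $T_0$ with $Y(T_0)=0$ and $\dot Y(T_0)>0$, so that the crossing is transverse.
\item[(c)] $X(T_0)>1$.
\end{itemize}
Intuitively the Coriolis term $2\dot\xi$ in $\ddot\eta$ bends the trajectory back to the axis while $\xi$ grows. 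I expect this verification to be the main obstacle, since the unstable direction could in principle carry the orbit away before $Y$ returns to $0$. I would first try to do it by hand from the closed-form solution. The fallback is a rigorous sign analysis of $Y(t)$ and $\dot X(t)$ on an explicit interval $[0,T_1]$, using the decomposition into the hyperbolic and elliptic modes.

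\textbf{Step 4: transfer back.} A transverse first return with strict inequalities is an open condition under $C^1$-small perturbations on compact time intervals. So for $\delta_0$ small and $E$ close enough to $E_c$, the nonlinear orbit has a time $t_0\approx T_0$ with
\[
q_2(t_0)=0,\qquad \dot q_2(t_0)>0,\qquad q_2<0<\dot q_1 \text{ on } (0,t_0).
\]
It also satisfies $q_1(t_0)>q_1(0)$, and $q_1(t_0)=-1+O(\delta_0)<0$. One point needs care: strict inequalities near the endpoints $t=0$ and $t=t_0$ do not follow from $C^0$-closeness alone. There I will use the Taylor expansions at $t=0$ ($\dot q_1\sim 15\delta_0 t$ and $q_2\sim-3\delta_0 t$ in leading order) together with transversality at $T_0$.
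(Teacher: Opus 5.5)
Your proposal is correct and follows essentially the paper's route. You rescale by $\delta_0$ about the saddle $(-1,0)$ and write the limiting linear solution explicitly in $\cosh/\sinh(\lambda_1 t)$ and $\cos/\sin(\lambda_2 t)$; the paper does this in symplectic eigen-coordinates rather than your Newtonian variables and gets the same solution with $\dot Y(0)=-C_1C_2/2=-3$. You then take the first transverse return $T_0\in(0,\pi/\lambda_2)$ with $X(T_0)>1$ and transfer to the nonlinear flow by $C^\infty_{\mathrm{loc}}$ convergence, exactly as the paper does.

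Two small remarks. The Step 3 check you flag as the main obstacle is elementary: on $(0,\pi/\lambda_2)$ both $\cosh(\lambda_1 t)$ and $-\cos(\lambda_2 t)$ increase, and $Y$ is strictly convex with $Y(0)=0>\dot Y(0)$ and $Y(\pi/\lambda_2)>0$. Also, the paper gets $\dot q_1>0$ on all of $(0,t_0)$ without endpoint analysis, from the monotone quantity $\dot q_1+2q_2$, whose derivative $-\partial_{q_1}U$ is positive in $\{q_1<0,\ |q|<1\}$.
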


\begin{proof}
Let $\bar x=(p-i,q+1)(W^T)^{-1}$ be the local symplectic coordinates, so that the equilibrium point \(S_1=(i,-1)\) corresponds to \(\bar x=0\). Here, $W$ is the $4\times 4$ symplectic matrix
\begin{equation} \label{equ: V}
W:=\begin{pmatrix}
\frac{C_2^2+2}{\sqrt{\lambda_1'}C_1C_0} & 0 & 0 & \frac{C_1^2+2}{\sqrt{\lambda_2'}C_2C_0}\\
0 & \frac{\sqrt{\lambda_2'}(C_2^2-2)}{C_2C_0} & \frac{\sqrt{\lambda_1'}(C_1^2-2)}{C_1C_0} & 0\\
0 & \frac{2\sqrt{\lambda_2'}}{C_2C_0} & \frac{2\sqrt{\lambda_1'}}{C_1C_0} & 0\\
\frac{C_1}{\sqrt{\lambda_1'}C_0}&0&0&\frac{C_2}{\sqrt{\lambda_2'}C_0}
\end{pmatrix}.
\end{equation}
where $C_0=2\times 7^{1/4}, C_1=\sqrt{7}-1, C_2=\sqrt{7}+1$, $\lambda_1'=\sqrt{1+2\sqrt{7}}$ and $\lambda_2'=\sqrt{-1+2\sqrt{7}}$.
For \(E=E_c\), the Hamiltonian takes the form $H=E_c+H_2(\bar p,\bar q)+O(|(p,q)|^3)=E_c+H_2(\bar x)+O(|\bar x|^3),$ where
\[
H_2(\bar x)=\frac{(\bar p_1-\bar q_2)^2}{2}+\frac{(\bar p_2+\bar q_1)^2}{2}-9\bar q_1^2+3\bar q_2^2=\frac{\lambda_1'}{2}(\bar x_1^2-\bar x_3^2) + \frac{\lambda_2'}{2}(\bar x_2^2+\bar x_4^2).
\]

The initial condition $q_1(0)=-1+\delta_0,$ $q_2(0)=0$, $\dot q_1(0)=0$ and $\dot q_2(0)<0$ corresponds, after rescaling \(\bar x=\delta_0 x\), to an initial condition converging to $x(0) = \kappa (0, -1/\sqrt{\lambda_2'}, 1/\sqrt{\lambda_1'}, 0 ) \in H_2^{-1}(0),$ where \(\kappa=\frac{C_0C_1C_2}{2(C_2-C_1)}>0\). The linearized equations at $S_1$ are
\[
\dot x_1=\lambda_1'x_3,\qquad \dot x_3=\lambda_1'x_1, \qquad \dot x_2=-\lambda_2'x_4,\qquad \dot x_4=\lambda_2'x_2.
\]
Thus, the limiting trajectory is \[ x(t) = \kappa \left( \frac{\sinh(\lambda_1't)}{\sqrt{\lambda_1'}}, -\frac{\cos(\lambda_2't)}{\sqrt{\lambda_2'}}, \frac{\cosh(\lambda_1't)}{\sqrt{\lambda_1'}}, -\frac{\sin(\lambda_2't)}{\sqrt{\lambda_2'}} \right). \]

The relation between the \(x\)-coordinates and the  \(q\)-coordinates is linear. Namely, if $(p-i,q+1)=\bar x\, W^T,$ then, after the rescaling \(\bar x=\delta_0x\), the displacement of \(q\) from \((-1,0)\) is $ q+1=\delta_0\,L(x\, W^T),$ where \(L\) is the projection from $(p,q)$ onto the \(q\)-coordinates. Therefore, for the limiting trajectory, we may write $q(t)+1=\delta_0\,\tilde q(t),$ where $\tilde q(t)=L(x\, W^T).$ Using the explicit form of \(W\), this gives
\[
\tilde q(t) = \frac{\kappa}{C_0} \left( -\frac{2\cos(\lambda_2't)}{C_2} + \frac{2\cosh(\lambda_1't)}{C_1}, \, \frac{C_1\sinh(\lambda_1't)}{\lambda_1'} - \frac{C_2\sin(\lambda_2't)}{\lambda_2'} \right),
\]
In particular, $\tilde q(0)=(\tilde q_1(0),0)\, =(1,0),\, \dot{\tilde q}_1(0)=0$ and $\dot{\tilde q}_2(0)\,=-\frac{C_1C_2}{2}<0.$ Moreover, the function $\tilde q_2(t)$ is negative for \(t>0\) small and has a first positive zero \(t_0\in(0,\pi/\lambda_2')\). At this zero, $\dot{\tilde q}_2(t_0)>0$ and $\tilde q_1(t_0)>\tilde q_1(0).$ Indeed, the \(\cosh(\lambda_1't)\)-term increases and the \(-\cos(\lambda_2't)\)-term is also larger at the first zero \(t_0>0\) than at \(t=0\).

Since the rescaled nonlinear Hamiltonians converge in \(C^\infty_{\mathrm{loc}}\) to \(H_2\), the corresponding nonlinear trajectories converge in \(C^\infty_{\mathrm{loc}}\) to the trajectory above. Hence, for every \(\delta_0>0\) sufficiently small, the original trajectory also admits $t_0>0$ satisfying $ q_2(t)<0$ for $0<t<t_0,$ $q_2(t_0)=0,$ $\dot q_2(t_0)>0,$ and $q_1(0)<q_1(t_0)<0$. Finally, the monotonicity \(\dot q_1(t)>0\) on \((0,t_0)\) follows from \((\dot q_1+2q_2)(0)=0\) and $\frac{d}{dt}(\dot q_1+2q_2) = -\partial_{q_1}U >0$ in the Hill region that $q_1<0,|q|<1$, for a fixed $\delta_0>0$ and every $|E-E_c|$ sufficiently small. Since \(q_2(t)<0\), this gives $\dot q_1(t)>-2q_2(t)>0.$ The lemma follows.
\end{proof}

Now let $-1<q_{1,*}<0$ be the least value so that the solution with initial conditions satisfying $q_{1,*}<q_{1,0}<0$ remains in the third quadrant $q_1,q_2<0$ for $0<t<t_0$ and reaches the symmetry axis \(q_1=0\) transversely at $t=t_0$. By Lemma~\ref{lem: hat q2(0)>-1+delta}, we have $q_{1,*}>-1+\delta_0$ for every energy sufficiently close to \(E_c\). It can be proved as in \cite{Birkhoff1915} that the solution with $q_{1,0}=q_{1,*}$ admits a collision, that is $q(t_0)=0$, and the limiting argument satisfies $\arg(\dot q(t_0))>\pi/4$.

Consequently, the shooting curve \(\Gamma_1\) extends as a real-analytic curve from the point \((-\pi/4,0)\) to a limiting point \((\theta_*,0)\) with \(\theta_*>\pi/4\). Since \(\Gamma_2\) is obtained from \(\Gamma_1\) by reflection with respect to the vertical axis, the two curves necessarily cross each other at the segment $\{0\}\times(-\bar q_2,0).$
Every such intersection corresponds to a trajectory meeting the symmetry axis orthogonally. Successive reflections with respect to the coordinate axes produce a symmetric retrograde periodic orbit. In particular, there is a topological crossing that persists for all energies sufficiently close to \(E_c\), giving the continuous family of retrograde orbits for $E$ sufficiently close to $E_c$.

Finally, since the shooting curves depend continuously on the energy, the
corresponding retrograde orbits converge in \(C^\infty\) to the critical
retrograde orbit as \(E\to E_c\). Since the retrograde orbits stay away from the singularities, the family of $2$-disks for such retrograde orbits can be chosen continuously and thus with uniformly bounded $|d\alpha|$-area also follows.

\section{Proof of Proposition \ref{prop_weakly_convex}}\label{sec_weakly_convex}


We shall use the following index estimate, which is essentially proved in
\cite[Theorem~1.9]{LS25} using the Robbin-Salamon index and holds for a broader class of mechanical-magnetic Hamiltonians.

\begin{lem}\label{lem: RS index estimate}
There exists an open neighborhood
\(\mathcal U_3\subset\mathbb R^4\) of the singularities corresponding to
\(S_1\) and \(S_2\) such that if $P$ is a contractible periodic orbit of $K$ satisfying $P\cap\mathcal U_3\neq\emptyset$ and \(P\) is not one of the Lyapunov orbits, then
$\mu_{\mathrm{CZ}}(P)\ge 3.$
\end{lem}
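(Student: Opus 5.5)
The plan is to localize near each saddle-center and pass to the rescaled picture used in Section~\ref{sec_contactform}. Near $S_1$ (and symmetrically near $S_2$) we use the symplectic coordinates $w$ in which $\widehat K=\h_c+Q(w)+O(|w|^3)$ with $Q=\frac{\lambda_1}{2}(w_1^2-w_3^2)+\frac{\lambda_2}{2}(w_2^2+w_4^2)$. We argue by contradiction. Suppose there are energies $\varepsilon_n\to0^+$, neighborhoods $\mathcal U^{(n)}$ shrinking to the singular points, and contractible periodic orbits $P_n\neq \widehat P_{2,\varepsilon_n}$ meeting $\mathcal U^{(n)}$ with $\mu_{\rm CZ}(P_n)\le 2$. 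The goal is to show that the index defect is forced to come from the local saddle-center dynamics, where it cannot occur.

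\textbf{Step 1: split each orbit into local and global pieces.} We decompose $P_n$ into arcs inside a fixed small ball $B_{\delta}$ around the saddle-center and arcs outside it. Outside $B_\delta$, the hypersurfaces converge to the strictly convex regular part of the critical level given by Proposition~\ref{prop_strict_convexity}. There, strict convexity gives a uniform positive lower bound on the rotation of the linearized flow in $\xi$, measured against the global framing from \cite{HWZ98}. This is the Long/HWZ mechanism, applied to arcs through the Robbin--Salamon index.

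\textbf{Step 2: analyze the arcs inside $B_\delta$ by blowing up.} We rescale by $w\mapsto w/\sqrt{\varepsilon_n}$, which converts the flow to the linear flow of $Q$ on $Q^{-1}(1)$, and use the energy relation \eqref{equ: energy relation}. There are two kinds of arcs. An arc that enters and exits $B_\delta$ passes along $w_1,w_3$ hyperbolically while rotating in the $(w_2,w_4)$ center plane at frequency $\lambda_2$. Since the passage time tends to infinity as the arc approaches the stable or unstable manifolds of the Lyapunov orbit, the linearized rotation grows. We would make this precise by showing that the Robbin--Salamon index of such an arc is at least $1-$(a small error), and grows unboundedly with the passage time. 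The alternative is an orbit staying entirely in $B_\delta$. In the quadratic model, the only periodic orbits on a positive level are the Lyapunov family $w_1=w_3=0$. By the persistence and uniqueness of the hyperbolic Lyapunov orbit in a neighborhood (center manifold uniqueness), such an orbit must be $\widehat P_{2,\varepsilon}$ or one of its iterates. Iterates have index $\geq 4$, since $\rho(\widehat P_{2})=1$ and the orbit is hyperbolic, so the $k$-fold cover has index $2k$.

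\textbf{Step 3: combine the pieces.} Catenating the arcs with the additivity of the Robbin--Salamon index, we get $\mu_{\rm CZ}(P_n)\ge 3$. The convex portion contributes a winding strictly exceeding one full turn once any global excursion occurs, and the local passages contribute nonnegatively.

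The main obstacle is Step 2 combined with the framing comparison in Step 3. The contact form near the saddle is $\alpha_1$ rather than the radial one, and the local trivialization of $\xi$ must be compared with the global $S^3$-framing uniformly in $\varepsilon$. We would handle this by working with the full symplectic linearized flow on $\R^4$ and the Robbin--Salamon index, as in \cite[Theorem~1.9]{LS25}, to avoid choosing contact framings. The needed uniform lower bound on passage arcs would come from the explicit hyperbolic/elliptic splitting $\omega_0=dw_1\wedge dw_3+dw_2\wedge dw_4$, where the elliptic block rotates by $\lambda_2 T$ and the hyperbolic block contributes a bounded Maslov-type term.
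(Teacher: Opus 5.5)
\textbf{There is a genuine gap in Step~3.} The paper gives no argument for this lemma beyond citing \cite[Theorem~1.9]{LS25}. Your ingredients are the right ones: an orbit confined near a saddle-center is a Lyapunov orbit or an iterate; convexity gives positive transverse rotation away from the saddle-centers; a passage near a saddle-center rotates in the center plane at frequency $\lambda_2$. The failure is in the step meant to produce the number $3$.

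Strict convexity only gives a positive rotation \emph{rate} of the transverse linearized flow in the global frame. So an arc outside $B_\delta$ contributes a nonnegative amount, roughly proportional to its duration, and nothing forces this to exceed one full turn. Pointwise positivity of the rotation gives only $\mu_{\rm CZ}\ge 1$, even for a closed loop. The bound $\ge 3$ of \cite{HWZ98} for strictly convex hypersurfaces is a statement about closed orbits, proved from the whole loop. It cannot be apportioned to sub-arcs.

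Conversely, ``local passages contribute nonnegatively'' (and ``at least $1-$ small error'') fails for passages that only graze $B_\delta$. By your own description, the hyperbolic block contributes a bounded term of either sign. An orbit may re-enter $B_\delta$ arbitrarily often, so these $O(1)$ terms can accumulate, together with the $O(1)$ frame-comparison errors at every entry and exit. As written, the concatenation in Step~3 does not yield $3$.

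\textbf{What is missing is to let the passage through $\mathcal U_3$ carry the estimate.} Suppose $P$ comes within distance $\delta_3$ of the saddle-center and is not confined to $B_\delta$. The local integrals ($w_1^2-w_3^2$ and $w_2^2+w_4^2$ in the linear model, or Moser's normal form for the analytic Hamiltonian) force that passage to last at least $c\log(\delta/\delta_3)$, uniformly in $\varepsilon$. During it the center block rotates by about $\lambda_2$ times the passage time. Hence this single arc contributes an amount tending to $+\infty$ as $\delta_3\to 0$. You state this growth in Step~2 but never use it.

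You then need only a lower bound for everything else that is independent of the number of arcs. One way to get it is as follows. Work in a single global frame of $\xi$. Fix a small ball outside of which the supercritical levels are still strictly convex for $\varepsilon$ small; arcs there are positive paths, with nonnegative Robbin--Salamon index. Take the ball small enough that every passage entering it is long enough for its center rotation to dominate both the hyperbolic-block term and the frame-comparison error.

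Two further cautions. First, switching to the $4\times 4$ flow on $\R^4$ to avoid framings discards the positivity used in Step~1. $\widehat K$ is not a convex function, and only the transverse flow inherits positivity from the convexity of the level set. Second, Step~1 only covers orbits in the bounded component $\mathcal M_E$, so orbits crossing the necks must be excluded from the statement.
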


Lemma~\ref{lem: RS index estimate} implies that every contractible periodic orbit intersecting $\mathcal U_3$ admits index at least $3$, except for the Lyapunov orbits $P_{2,1,E}, P_{2,2,E}$.  Moreover, due to the strict convexity of $\mathcal M_{E_c}$ proved in Proposition \ref{prop_strict_convexity}, the Conley-Zehnder indices grow linearly with the actions, in the sense that there exist constants  $A, B$, with $B>0$, so that if a periodic orbit in $\mathcal M_{E_c}$ has action $T>0$ then $\mu_{\rm CZ}(P) \geq A+B T.$ The constants $A,B$ can be estimated using the curvatures of $\mathcal M_{E_c}$ which are strictly positive. Hence, we can find uniform constants $A,B$, with $B>0$, so that the same relation holds for every periodic orbit in $\mathcal M_E\setminus \mathcal U_3$, for every $E-E_c>0$ sufficiently small.

Now suppose by contradiction that there exists $E_k \to E_c^+$ as $k\to \infty$ so that $\mathcal M_{E_k}$ admits a periodic orbit $P_k$ whose Conley-Zehnder index is $\leq 2$ for every $k$, and $P_k$ is not a Lyapunov orbit. By Lemma \ref{lem: RS index estimate}, $P_k \subset \mathcal M_{E_k} \setminus \mathcal U_3$ for $k$ sufficiently large. From the action-index relation, we conclude that the actions of $P_k$ are uniformly bounded. Thus, we can pass to the limit $k\to \infty$ and obtain a periodic orbit $P_c \subset \mathcal M_{E_c}\setminus \mathcal U_3$ so that $P_k \to P_{c}$ in $C^\infty$ as $k\to \infty$. Since the Conley-Zehnder index is lower semi-continuous, we conclude that $\mu_{\rm CZ}(P_c) \leq 2$, a contradiction. This implies that $\mathcal M_E$ is weakly convex for every $E-E_c>0$ sufficiently small and the only periodic orbits with index $2$ are the Lyapunov orbits near the equilibrium points.


\begin{thebibliography}{10}
\bibitem{AFKP12}
P. Albers, U. Frauenfelder, O. van Koert and G. P. Paternain, {\it Contact Geometry of the Restricted there-body problem},
\newblock  Communication on Pure and Applied Mathematics,
LXV(2012), pp: 0229-0263.

\bibitem{AFFHvK} P. Albers, J. Fish, U. Frauenfelder, H. Hofer and O. van Koert, {\it Global surfaces of section in the planar restricted $3$-body problem.}  Archive for Rational Mechanics and Analysis, 204(2012), 273--284.

\bibitem{Aydin1} C. Aydin,
{\it The Conley-Zehnder indices of the spatial Hill three-body problem},
Celestial Mechanics and Dynamical Astronomy 135 (2023), Article 29.

\bibitem{Aydin2} C. Aydin and A. Batkhin,
{\it Studying network of symmetric periodic orbit families of the Hill problem via symplectic invariants},
arXiv:2410.21245, 2024.


\bibitem{BanLong2010}
V. Bangert, Y. Long,
\newblock {\it The existence of two closed geodesics on every Finsler $2$-sphere},
\newblock{\it  Math. Ann.} 346(2010), 335-366.

\bibitem{Birkhoff1915}
G. Birkhoff,
\newblock {\it The restricted problem of three bodies},
\newblock{Rend. Circ. Matem. Palermo}, 39(1915), 265¨C334.

\bibitem{Brown1896}
E. Brown,
\newblock{\it An introductory treatise on the lunar theory},
\newblock Cambridge University Press (1896).

\bibitem{Conley1968} C. Conley. {\it Twist mappings, linking, analyticity, and periodic solutions which pass close to an unstable periodic solution.} Topological dynamics, (1968), 129--153.

\bibitem{dPS1} N. V. de Paulo and Pedro A. S. Salom\~ao, {\it  Systems of transversal sections near critical energy levels of Hamiltonian systems in $\R^4$}, Memoirs of the Amer. Math. Soc., 252(2018), no. 1202, 1--105.

\bibitem{dPS2} N. V. de Paulo and Pedro A. S. Salom\~ao, {\it On the multiplicity of periodic orbits and homoclinics near critical energy levels of Hamiltonian systems in $\mathbb R^4$}, Transactions of the Amer. Math. Soc., 372(2019), no. 2, 859--887.

\bibitem{dPS3} N. V. de Paulo and Pedro A. S. Salom\~ao. {\it Reeb flows, pseudo-holomorphic curves and transverse foliations.} S\~ao Paulo Journal of Mathematical Sciences 16, no. 1 (2022): 314--339.

\bibitem{dPHKS} N. de Paulo, U. Hryniewicz, S. Kim and Pedro A. S. Salom\~ao. {\it Genus zero transverse foliations for weakly convex Reeb flows on the tight $3$-sphere}, to appear in Advances in Mathematics.




\bibitem{FS} J. W. Fish and R. Siefring. {\it Connected sums and finite energy foliations I:
Contact connected sums}, J. Symplectic Geom. 16(2018), no. 6, 1639--1748

\bibitem{franks1} J. Franks. {\it Generalizations of the Poincar\'e-Birkhoff theorem.} Ann. of Math. 128(1988), 139--151.

\bibitem{franks2} J. Franks. {\it Erratum to ``Generalizations of the Poincar\'e-Birkhoff theorem''.} Ann. of Math. 164(2006), 1097--1098.

\bibitem{FvK} U. Frauenfelder and O. van Koert.
\newblock {\it The restricted three-body problem and holomorphic curves}.
\newblock Springer International Publishing, 2018.

\bibitem{FKZ2016}
U. Frauenfelder, O. van Koert and L. Zhao,
\newblock {\it On the problem of convexity for the restricted three-body problem around the heavy primary},
\newblock{\it Hokkaido Math.J.} 51(2022), no. 2, 287--317.

\bibitem{GRS} C. Grotta-Ragazzo, and Pedro A. S. Salom\~ao. {\it The Conley¨CZehnder index and the saddle-center equilibrium.} Journal of Differential Equations 220(2006), no. 1, 259--278.

\bibitem{C-GLS2025}
C. Grotta-Ragazzo, L. Liu and
Pedro A. S. Salom\~ao,
\newblock {\it Non-resonant Hopf links near a Hamiltonian equilibrium point}, to appear in Communications in Mathematical Physics.



\bibitem{Hill1}
G. Hill,
\newblock {\it Researches in the lunar theory},
\newblock Amer. J. Math., 1(1878), 5-26, 129-147, 245-260.

\bibitem{Hill2}
G. Hill,
\newblock {\it On the part of the Motion of the Lunar Perigee which is a Function of the mean motions of the Sun and the Moon,}
\newblock{John Wilson \& Son, Cambridge, Massachusetts,(1877), Reprinted in Acta} 8(1886), 1-36.

\bibitem{Hofer1993}
H.~Hofer,
\newblock {\it Pseudoholomorphic curves in symplectizations with applications to the Weinstein conjecture in dimension three},
\newblock  Invent. Math., 114(1993), 515--563.

\bibitem{char1}
H. Hofer, K. Wysocki and E. Zehnder. \textit{\it A characterization of the tight three sphere.} Duke Math. J. 81 (1995), no. 1, 159--226.

\bibitem{char2}
H. Hofer, K. Wysocki and E. Zehnder. \textit{\it A characterization of the tight three sphere II.} Commun. Pure Appl. Math. 55(1999), no. 9, 1139--1177.

\bibitem{HWZI}
H.~Hofer, K.~Wysocki, E.~Zehnder,
\newblock {\it Properties of pseudoholomorphic curves in symplectictisation I: Asymptotics},
\newblock Ann. Inst. Henri Poincar\'e, 13(1996), 337--379.

\bibitem{HWZII}
H.~Hofer, K.~Wysocki, E.~Zehnder,
\newblock {\it Properties of pseudoholomorphic curves in symplectictisation II: Embedding control and algebraic invariants},
\newblock  Geom. and Funct. Anal., 5(1995), no. 2, 270--328.

\bibitem{HWZIII}H. Hofer, K. Wysocki and E. Zehnder. \textit{Properties of pseudoholomorphic curves in symplectizations III: Fredholm theory.} Topics in nonlinear analysis, Birkh\"auser, Basel, (1999), 381--475.

\bibitem{HWZ98}
H. Hofer, K. Wysocki and E. Zehnder.
\newblock {\it The dynamics of strictly convex energy surfaces in $\mathbb R^4$}.
\newblock Ann. of Math., 148(1998), 197--289.

\bibitem{HWZ03}
H. Hofer, K. Wysocki and E. Zehnder. \newblock {\it Finite energy foliations of tight three-spheres and Hamiltonian dynamics}.
\newblock Ann. of Math, 157(2003), 125--255.

\bibitem{hryn2}U. Hryniewicz. {\it Systems of global surfaces of section for dynamically convex Reeb flows on the $3$-sphere,}  Journal of Symplectic Geometry, 12(2014), no. 4, 791--862.

\bibitem{HLS2014} U. Hryniewicz, J. Licata and Pedro A. S. Salom\~ao. {\it A dynamical characterization of universally tight lens spaces}, Proceedings of the London Mathematical Society, 110(2014),  213--269.

\bibitem{HS1} U. Hryniewicz and Pedro A. S. Salom\~ao. {\it On the existence of disk-like global sections for Reeb flows on the tight $3$-sphere.} Duke Mathematical Journal, 160(2011), no. 3, 415--465.

\bibitem{HS2} U. Hryniewicz and Pedro A. S. Salom\~ao. {\it Elliptic bindings for dynamically convex Reeb flows on the real projective three-space.} Calc. Var. Partial Differential Equations 55 (2016), no. 2, Art. 43, 57 pp.


\bibitem{HSW2023} U. Hryniewicz,  Pedro A. S. Salom\~ao, and K. Wysocki.
\newblock {\it Genus zero global surfaces of section for Reeb flows and a result of Birkhoff}.
\newblock Journal of the European Mathematical Society, 25 (9), 2023.

\bibitem{HLOSY} X. Hu, L. Liu, Y. Ou, Pedro A. S. Salom\~ao, G. Yu. {\it A symplectic dynamics approach to the spatial isosceles three-body problem}, to appear in Journal of the European Mathematical Society.

\bibitem{JvK} Joung, Chankyu, and Otto van Koert. {\it Computational symplectic topology and symmetric orbits in the restricted three-body problem.} Nonlinearity 38(2025), no. 2, 025015.

\bibitem{Kim1} S. Kim. {\it On a convex embedding of the Euler problem of two fixed centers.} Regular and Chaotic Dynamics 23 (2018), 304--324.


\bibitem{Lee2017}
J. Lee,
\newblock {\it Fiberwise Convexity of Hill's lunar problem},
\newblock{J. Topol. Anal.}, 9(2017), no.4, 571-630.

\bibitem{Ligon} T. Ligon, {\it Hill's Lunar Equations, Series, Convergence, Motion of the Perigee}, arXiv: 2512.08961, 2025.

\bibitem{LS25}
L. Liu, Pedro A. S. Salom\~ao,
\newblock {\it Finite energy foliations and global dynamics in the restricted three-body problem},
\newblock{arXiv:2506.17867}


\bibitem{Long02}
Y.~Long.
\newblock {\it Index theory for symplectic paths with applications}, volume 207,
\newblock  Progress in Mathematics,
Birkh\"{a}user Verlag, Basel, 2002.

\bibitem{McGehee1969}
R. McGehee.
\newblock {\it Some homoclinic orbits for the restricted three-body problem},
\newblock The University of Wisconsin, Ph.D. Thesis, 1969.

\bibitem{MIW} E. Meletlidou,  S. Ichtiaroglou, and F. J. Winterberg. {\it Non¡ªintegrability of Hill's lunar problem.} Celestial Mechanics and Dynamical Astronomy 80.2 (2001): 145-156.

\bibitem{M-RSS} J. Morales-Ruiz,  C. Sim¨®, and S. Simon. {\it Algebraic proof of the non-integrability of Hill's problem.} Ergodic Theory and Dynamical Systems 25.4 (2005): 1237--1256.



\bibitem{Po}
H. Poincar\'e.
\textit{Sur un th\'eor\`eme de g\'eom\'etrie.} Rend. Circ. Mat. Palermo, 33(1912), 375--407.

\bibitem{Salomao2004}
Pedro A. S. Salom\~ao,
\newblock {\it Convex energy levels of Hamiltonian systems}.
\newblock {\it Qual. Theory Dyn. Syst.}, 4(2), 439-457, 2004.

\bibitem{SH18}
Pedro A.~S. Salom\~ao and U.~L. Hryniewicz.
\newblock {\it Global surfaces of section for {R}eeb flows in dimension three and beyond}.
\newblock In  Proceedings of the International Congress of Mathematicians---Rio de Janeiro 2018. Vol. II. Invited lectures, pages 941--967. World Sci. Publ., Hackensack, NJ, 2018.

\bibitem{Sch} A. Schneider. {\it Global surfaces of section for dynamically convex Reeb flows on lens spaces,}  Transactions of the American Mathematical Society, 373(2020), no. 4, 2775--2803.

\bibitem{SiSt2000} C. Sim\'o and  T.J. Stuchi,
\newblock {\it Central stable/unstable manifolds and the destruction of KAM tori in the planar Hill problem},
\newblock Physica D., {\bf 140}, 1-32, 2000.

\bibitem{Thorpe} John A. Thorpe, \textit{Elementary topics in differential geometry}. Springer Science $\&$ Business Media, 2012.



\bibitem{Wendl08}
C.~Wendl,
\newblock {\it Finite energy foliations and surgery on transverse links}, Ph.D. Thesis,
\newblock  New York University, 2005.

\bibitem{Wendl08b} C. Wendl, {\it Finite energy foliations on overtwisted contact manifolds}.  Geometry \& Topology , 12(2008), no. 1, 531--616.

\bibitem{Wendl10a} C. Wendl. {\it Automatic transversality and orbifolds of punctured holomorphic curves in dimension four}.  Commentarii Mathematici Helvetici, 85(2010), no. 2, 347-407.

\bibitem{Wendl10} C. Wendl, {\it Compactness for embedded pseudoholomorphic curves in $3$-manifolds}. Journal of the European Mathematical Society, 12(2010), no. 2, 313--342.

\bibitem{Wintner1}
A. Wintner,
\newblock {\it \"Uber die Konvergenzfragen der Mondtheorie},
\newblock Math. Z, 30(1929), no. 1, 211-227.

\bibitem{Wintner2}
A. Wintner,
\newblock {\it The Analytical Foundations of Celestial Mechanics},
\newblock Princeton University Press, (1941).


\end{thebibliography}
\end{document}